\renewcommand*{\p@subsection}{\S\,}
\renewcommand*{\p@subsubsection}{\S\,}
\theoremstyle{plain}
\newtheorem{theorem}{Theorem}[section]
\newtheorem{lemma}[theorem]{Lemma}
\newtheorem{proposition}[theorem]{Proposition}
\theoremstyle{definition}
\newtheorem{definition}[theorem]{Definition}
\newtheorem{example}[theorem]{Example}
\newtheorem{problem}[theorem]{Problem}
\theoremstyle{remark}
\newtheorem{remark}[theorem]{Remark}
\numberwithin{equation}{section}
\newcommand{\N}{\ensuremath{\mathbb{N}}}
\newcommand{\Z}{\ensuremath{\mathbb{Z}}}
\newcommand{\kk}{\ensuremath{\Bbbk}}
\newcommand{\Xtt}{\ensuremath{\mathtt{X}}}
\newcommand{\tr}{\operatorname{tr}}
\newcommand{\Hom}{\operatorname{Hom}}
\newcommand{\Mat}{\operatorname{Mat}}
\newcommand{\id}{\operatorname{id}}
\newcommand{\Id}{\operatorname{Id}}
\newcommand{\Gl}{\operatorname{GL}}
\newcommand{\Rep}{\operatorname{Rep}}
\newcommand{\mult}{{\operatorname{m}}}
\newcommand{\wk}{{\operatorname{wk}}}
\newcommand{\Sym}{\operatorname{Sym}}
\newcommand{\ab}{\operatorname{ab}}
\newcommand{\delh}{\ensuremath{\hat{\partial}}}
\newcommand{\DDer}{\ensuremath{\mathbb{D}\text{er}}}
\newcommand\br[1]{\{ #1 \}}
\newcommand\dgal[1]{  \left\{\!\!\left\{#1\right\}\!\!\right\} }
\newcommand\dgsmall[1]{  \{\!\!\{#1\}\!\!\} }
\newcommand{\Addresses}{{
  \bigskip
  \footnotesize

\noindent M.~Fairon, \textsc{School of Mathematics and Statistics, University of Glasgow, University Place, Glasgow G12 8QQ, UK}

\noindent  \textit{E-mail address:}  \texttt{Maxime.Fairon@glasgow.ac.uk}

\noindent --------------, \textsc{Department of Mathematical Sciences, Schofield Building, Loughborough University, Epinal Way, Loughborough LE11 3TU, UK}

\noindent  \textit{E-mail address:}  \texttt{M.Fairon@lboro.ac.uk}

  \medskip

\noindent C.~McCulloch, \textsc{School of Mathematics and Statistics, University of Glasgow, University Place, Glasgow G12 8QQ, UK}

\noindent  \textit{E-mail address:}  \texttt{2366542m@student.gla.ac.uk}
}}
\title{Around Van den Bergh's double brackets\\for different bimodule structures}
\author{Maxime Fairon \& Colin McCulloch}
\date{\today}
\begin{document}

\maketitle

 \begin{abstract}
A double Poisson bracket, in the sense of M. Van den Bergh, is an operation on an associative algebra $A$ which induces a Poisson bracket on each representation space $\operatorname{Rep}(A,n)$ in an explicit way. 
In this note, we study the impact of changing the Leibniz rules underlying a double bracket. This change amounts to make a suitable choice of $A$-bimodule structure on $A\otimes A$.  
In the most important cases, we describe how the choice of $A$-bimodule structure fixes an analogue to Jacobi identity, and we obtain induced Poisson brackets on representation spaces. 
The present theory also encodes a formalisation of the widespread tensor notation used to write Poisson brackets of matrices in mathematical physics. 
 \end{abstract}


\section{Introduction} \label{S:Intro}

In an influential paper \cite{VdB1}, Van den Bergh introduced  the notion of \emph{double Poisson brackets}, which are non-commutative analogues of Poisson brackets. 
For a unital associative algebra $A$ over a field $\kk$ of characteristic $0$, a double bracket is a $\kk$-bilinear map $\dgal{-,-}:A\times A \to A \otimes A$ (where $\otimes:=\otimes_\kk$) satisfying the following rules for any $w,x,y,z\in A$: 
\begin{subequations} \label{Eq:Intro-dbr}
\begin{align}
  \dgal{x,y}=&-\tau_{(12)}\dgal{y,x}\,,   \label{Eq:Intro-dbr-cyc} \\
\dgal{x,yz}=&
(y\otimes 1) \dgal{x,z}
+ \dgal{x,y} (1\otimes z)\,,  \label{Eq:Intro-dbr-Der1} \\   
    \dgal{wx,y}=&
(1\otimes w) \dgal{x,y}
+ \dgal{w,y} (x\otimes 1)\,,  \label{Eq:Intro-dbr-Der2} 
\end{align}
\end{subequations}
where $\tau_{(12)} x\otimes y = y \otimes x$ and $(w\otimes y)(x\otimes z)= wx \otimes yz$. 
Furthermore, a double bracket is called Poisson if it satisfies some version of Jacobi identity valued in $A^{\otimes 3}$. 
Their importance is rooted in representation theory: if $A$ is equipped with a double Poisson bracket, then for each $n\geq 1$ the representation space $\Rep(A,n)$ parametrising representations of $A$ on $\kk^n$ inherits a Poisson bracket $\br{-,-}$. 
Furthermore, if we reinterpret \cite{VdB1}, we can characterise the Poisson bracket on $\Rep(A,n)$ in terms of an operation $\dgal{-,-}_{(n)}$ which is defined for two $n\times n$ matrix-valued functions $X,Y$ on  $\Rep(A,n)$ (e.g. matrices representing elements $x,y \in A$) through 
\begin{equation} \label{Eq:Br-VdB}
\dgal{X,Y}_{(n)}=
\sum_{1\leq i,j,k,l \leq n} 
\br{X_{ij},Y_{kl}} \, E_{kj}\otimes E_{il}\,.
\end{equation}
Here, $E_{ij}\in \Mat_{n\times n}(\kk)$ is the elementary matrix satisfying $(E_{ij})_{i'j'}=\delta_{ii'}\delta_{jj'}$. 
This operation thus takes two matrices as an input, and it yields (a linear combination of tensor products of) two matrices as an output. 
The advantage of this notation stems from translating the antisymmetry and Leibniz rules of the Poisson bracket $\br{-,-}$ in the form 
\begin{subequations} \label{Eq:Intro-VdB}
\begin{align}
  \dgal{X,Y}_{(n)}=&-\tau_{(12)}\dgal{Y,X}_{(n)}\,,   \label{Eq:Intro-cyc} \\
\dgal{X,YZ}_{(n)}=&
(Y\otimes \Id_n) \dgal{X,Z}_{(n)}
+ \dgal{X,Y}_{(n)} (\Id_n\otimes Z)\,,  \label{Eq:Intro-VdB1} \\   
    \dgal{XZ,Y}_{(n)}=&
(\Id_n\otimes X) \dgal{Z,Y}_{(n)}
+ \dgal{X,Y}_{(n)} (Z\otimes\Id_n)\,.  \label{Eq:Intro-VdB2} 
\end{align}
\end{subequations}
(Jacobi identity can also be recast into this matrix notation, but we shall not need it in the introduction.) 
A direct comparison of \eqref{Eq:Intro-dbr} and \eqref{Eq:Intro-VdB} thus motivates a double Poisson bracket as the natural formalisation of a family of Poisson brackets defined on $\Rep(A,n)$, $n\geq 1$. In particular, this principle emphasises an important reason for the development of the study of double Poisson brackets.  
Some other useful aspects of this theory include classification results \cite{B,ORS,ORS2,P16,VdW}, double Poisson cohomology \cite{AKKN,PV,VdW}, connection to (pre-)Calabi-Yau algebras \cite{BCS,FH,IKV,LV}, study from the point of view of properads \cite{L,LV} and relation to integrable systems \cite{CW,DSKV,F22,FV}.

\medskip

Let us now adopt a completely different perspective. 
In the field of integrable systems, 
an effective method used to perform computations involving a Poisson bracket $\br{-,-}$ on a space parametrised by matrices, such as $\Rep(A,n)$, consists in using a ``tensor notation" as follows.
We introduce an operation $\br{-\stackrel{\otimes}{,}-}$ such that, for two matrices $X,Y$ of size $n\times n$ which are seen as matrix-valued functions on the given space, we set (see e.g. \cite{BBT})
\begin{equation} \label{Eq:Br-tens}
\br{X\stackrel{\otimes}{,}Y}=
\sum_{1\leq i,j,k,l \leq n} 
\br{X_{ij},Y_{kl}} \, E_{ij}\otimes E_{kl}\,.
\end{equation}
Note the similarity with \eqref{Eq:Br-VdB}, but the different arrangement of the indices. In particular, the antisymmetry of the Poisson bracket on $\Rep(A,n)$ still takes the form $\br{X\stackrel{\otimes}{,}Y}=-\tau_{(12)} \br{Y\stackrel{\otimes}{,}X}$ as in \eqref{Eq:Intro-cyc}, but the Leibniz rules now imply 
\begin{subequations}
\begin{align}
    \br{X\stackrel{\otimes}{,}YZ}=&
(\Id_n\otimes Y) \br{X\stackrel{\otimes}{,}Z}
+ \br{X\stackrel{\otimes}{,}Y} (\Id_n\otimes Z)\,,
\label{Eq:Intro-tens1} \\   
    \br{XZ\stackrel{\otimes}{,}Y}=&
(X\otimes\Id_n) \br{Z\stackrel{\otimes}{,}Y}
+ \br{X\stackrel{\otimes}{,}Y} (Z\otimes\Id_n)\,. 
 \label{Eq:Intro-tens2} 
\end{align}
\end{subequations}
It is clear that, given a Poisson bracket on $\Rep(A,n)$, the notations $\dgal{-,-}_{(n)}$ and $\br{-\stackrel{\otimes}{,}-}$ can be used interchangeably (we only need to know $\br{X_{ij},Y_{kl}}$ to define both \eqref{Eq:Br-VdB} and \eqref{Eq:Br-tens}). 
However, if we want to formalise these two operations  \emph{at the level of the associative algebra} $A$ in the spirit of Van den Bergh \cite{VdB1}, such operations will no longer be equivalent. This idea is at the core of the present note, and the key motivation is that the different matrix Leibniz rules given in \eqref{Eq:Intro-VdB1}--\eqref{Eq:Intro-VdB2} and  \eqref{Eq:Intro-tens1}--\eqref{Eq:Intro-tens2}   have a non-commutative origin based on different $A$-bimodule structures on $A\otimes A$. 

\medskip 

In Section \ref{S:Def}, we start by introducing double brackets as $\kk$-bilinear maps $\dgal{-,-}:A\times A\to A\otimes A$ satisfying the cyclic antisymmetry  $\dgal{x,y}=-\tau_{(12)}\dgal{y,x}$ for all $x,y\in A$ together with Leibniz rules for both arguments, see Definition \ref{Def:Dbr}. Each Leibniz rule is determined by an $A$-bimodule structure on $A\otimes A$. Compatibility of these Leibniz rules with the cyclic antisymmetry puts some constraint on the choice of $A$-bimodule structures, which must be \emph{swap-commuting} in the sense of Definition \ref{Def:Sw-Bim}.  
Then, we define in \ref{ss:wDBR} the notion of double (weak) Poisson brackets, where we require the vanishing of a map $A^{\times 3}\to A^{\otimes 3}$ called the \emph{(weak) double Jacobiator}. This requirement should be seen as an analogue of Jacobi identity. While the notion of double Poisson brackets is due to Van den Bergh \cite{VdB1} for the case \eqref{Eq:Intro-dbr} associated with the outer bimodule structure, we are forced to consider weaker versions in order to encompass the formalisation of the operation \eqref{Eq:Br-tens} on associative algebras. We then move on to derive general properties of double brackets in \ref{ss:GenDBr}. We finish the section by defining morphisms and equivalences of double brackets which allow us to compare such operations even in the presence of different swap-commuting bimodule structures, see Figure \ref{fig:Equiv}.  

In Section \ref{S:Exmp}, we study four families of double brackets that are associated with the most important  swap-commuting bimodule structures on $A\otimes A$ (hence this fixes the Leibniz rules of the double brackets). 
The case originally considered by Van den Bergh \cite{VdB1} comes from the outer bimodule structure \eqref{Eq:Mod-out} which is studied in \ref{ss:E-Out}. The formalisation of the operation \eqref{Eq:Br-tens} comes from the right bimodule structure \eqref{Eq:Mod-R} which is studied in \ref{ss:E-Right}. We advise the reader to have a look at these examples while studying the general theory from Section \ref{S:Def}. In particular, it will be clear from Lemma \ref{Lem:OutJac}  that it is more natural to verify the vanishing of the double Jacobiator in the case associated with the  outer bimodule structure \eqref{Eq:Mod-out}, 
while we see from Lemma \ref{Lem:R-wJac} that  the vanishing of a \emph{weak} double Jacobiator is fundamental in the case associated with the right bimodule structure \eqref{Eq:Mod-R}. 

Following Van den Bergh \cite{VdB1}, we investigate as part of Section \ref{S:Rep} how to induce a bilinear operation on each representation space $\Rep(A,n)$, $n\geq 1$, when we have a double bracket on $A$. In fact, we consider the different families of double brackets from Section \ref{S:Exmp} because we can explicitly induce bilinear operations in their presence. 
We also explain how, in the cases of the outer and right bimodule structures, the induced operations are nothing else than the operations $\dgal{-,-}_{(n)}$ \eqref{Eq:Br-VdB} and $\br{-\stackrel{\otimes}{,}-}$ \eqref{Eq:Br-tens} mentioned above, which motivated the present work. 
We prove as part of Theorems \ref{Thm:Rep-Out-Lie} and \ref{Thm:Rep-Right-Lie} that these induced operations are Poisson brackets provided that the underlying double bracket on $A$ is (weak) Poisson. 

We conclude with Section \ref{S:Comp} where we compare the two most important classes of double brackets. Namely, the one associated with the outer bimodule structure \eqref{Eq:Mod-out} due to Van den Bergh \cite{VdB1}, and the one associated with the right bimodule structure \eqref{Eq:Mod-R} which formalises  the operation \eqref{Eq:Br-tens}. 
To help the reader understand how to perform computations with double brackets in more details, we start in Appendix  \ref{S:Case} the classification of gradient double Poisson brackets (associated with the outer bimodule structure, in the original sense of Van den Bergh \cite{VdB1}).

\medskip

Let us end by emphasising that our study consists of a modification of Van den Bergh's theory of double brackets in the presence of \emph{other} bimodule structures on $A\otimes A$. This differs from the double Poisson bimodules from \cite{CEEY} which are maps $A\times M \to (A\otimes M)\oplus (M\otimes A)$ where $M$ is a fixed $A$-bimodule.  
Another completely different point of view that was undertaken by Arthamonov \cite{Art}, then extended by Goncharov and Gubarev \cite{GG}, relies on a modification of the antisymmetry rule of a double bracket. We do not pursue these directions at all. 

\medskip 

\noindent \textbf{Acknowledgments.}
We thank D. Calaque, O. Chalykh, D. Fern\'{a}ndez, A. Pichereau, V. Rubtsov and D. Valeri for stimulating discussions about double brackets. 
The research of the first author was partly supported by a Rankin-Sneddon Research Fellowship of the University of Glasgow, and a Doctoral Prize Fellowship of Loughborough University. 
The second author was partly supported by a Summer Project Bursary of the University of Glasgow.


\section{A general theory of double brackets}  \label{S:Def}

We introduce a general definition of double brackets, before giving their main properties and ways to compare them. Many examples illustrating this theory can be found in Section \ref{S:Exmp}. 

\subsection{Notations} 

Throughout the paper, we use the following conventions. 
All algebras are finitely generated, unital and taken over a fixed field $\kk$ of characteristic $0$. 
Tensor products are not decorated when taken over the base field: $\otimes=\otimes_\kk$. 
For two algebras $A,B$, an element $d\in A\otimes B$ is denoted as $d'\otimes d''$ ($:=\sum_j d_j' \otimes d_j''$) using Sweedler's notation; no confusion should arise from this notation because all operations that we will use are $\kk$-linear. 
There is a left action of the symmetric group $S_n$ on $A^{\otimes n}$ through 
\begin{equation*}
    \tau:S_n \times A^{\otimes n} \to A^{\otimes n}\,, \quad 
    \tau(\sigma,a_1\otimes \ldots \otimes a_n)
    =\tau_\sigma(a_1\otimes \ldots \otimes a_n)
    =a_{\sigma^{-1}(1)}\otimes \ldots \otimes a_{\sigma^{-1}(n)}\,.
\end{equation*}
For $n=2$, $\tau_{(12)}$ swaps the two tensor factors of $A^{\otimes 2}$ and we denote this operation ${}^\circ : d \mapsto d^\circ=d'' \otimes d'$; we call the map ${}^{\circ}: A^{\otimes 2}\to A^{\otimes 2}$ the \emph{swap automorphism}.   
We will regularly see $A^{\otimes 2}$ as an algebra for the multiplication $(a_1\otimes b_1)(a_2\otimes b_2)=a_1a_2 \otimes b_1b_2$, for $a_1,a_2,b_1,b_2\in A$. 

The following $A$-bimodule structures on $A^{\otimes 2}$ will be used: for $a,b\in A$ and $d\in A^{\otimes 2}$, 
\begin{align}
&&a \cdot_l d \cdot_l b = ad'b \otimes d''\,,  \qquad  
&\text{(left bimodule structure)} ; \label{Eq:Mod-L} \\
&&a \cdot_r d \cdot_r b = d' \otimes ad''b\,,  \qquad  
&\text{(right bimodule structure)} ; \label{Eq:Mod-R}\\
&&a \cdot_{out} d \cdot_{out} b = ad' \otimes d'' b\,,  \qquad  
&\text{(outer bimodule structure)} ; \label{Eq:Mod-out}\\
&&a \cdot_{in} d \cdot_{in} b = d'b \otimes ad''\,,  \qquad  
&\text{(inner bimodule structure)} . \label{Eq:Mod-in}
\end{align}
Given two automorphisms $\alpha,\beta$ of $A$, we can modify a bimodule structure $\cdot$ on $A^{\otimes 2}$ into another bimodule, denoted by $\cdot^{\alpha,\beta}$, in the following way:  
\begin{equation}
a \cdot^{\alpha,\beta} d \cdot^{\alpha,\beta} b := \alpha(a)\cdot d \cdot \beta(b)\,,  \qquad  
\text{( }(\alpha,\beta)\text{-twisted bimodule structure)} . \label{Eq:Mod-Twis} 
\end{equation}
We denote $\cdot^{\alpha}:=\cdot^{\alpha,\alpha}$ the corresponding  \emph{$\alpha$-twisted} bimodule structure. For example, the left bimodule structure can be modified as follows: 
\begin{equation}
    a \cdot_l^{\alpha,\beta} d \cdot_l^{\alpha,\beta} b = \alpha(a)d'\beta(b) \otimes d''\,,  \qquad  
\text{( }(\alpha,\beta)\text{-twisted left bimodule structure)} . \label{Eq:Mod-Ltwis} 
\end{equation}
(The right, outer and inner bimodule structures can be twisted in the exact same way.) These twists amount to precomposing the bimodule multiplication with the map $\alpha\times \beta : A^{\times 2}\to A^{\times 2}$. 

Finally, $\N=\{0,1,2,\ldots\}$, $\Z=\{\ldots,-1,0,1,\ldots\}$, while $\N^\times$ and $\Z^\times$ denote these sets without $0$. 

\subsection{Double brackets} 

\begin{definition} \label{Def:Sw-Bim}
Fix a $A$-bimodule structure denoted $\cdot$ on $A^{\otimes 2}$. 
The \emph{swap $A$-bimodule structure} $\ast$ on $A^{\otimes 2}$ associated with $\cdot$ is defined for $a,b\in A$ and $d\in A^{\otimes 2}$ by 
\begin{equation} \label{Eq:M-swap}
a \ast d \ast b = (a\cdot d^\circ \cdot b)^\circ\,.
\end{equation}
We say that $\cdot$ is \emph{swap-commuting} if it commutes with its swap $A$-bimodule structure $\ast$; that is, for any $a_1,a_2,b_1,b_2\in A$ and $d\in A^{\otimes 2}$, we have  
\begin{equation} \label{Eq:M-swapCom}
    a_1\cdot (a_2\ast d \ast b_2) \cdot b_1 
    = a_2\ast (a_1\cdot d \cdot b_1) \ast b_2\,.
\end{equation}
\end{definition}
In the case of a swap-commuting bimodule structure, we can forget to write the braces in iterated bimodule multiplications such as \eqref{Eq:M-swapCom}. 
Note that a bimodule structure is swap-commuting if and only if its swap bimodule structure is swap-commuting. 

As examples of swap-commuting bimodules, we can consider the left bimodule structure \eqref{Eq:Mod-L} with swap given by the right bimodule structure \eqref{Eq:Mod-R}, or the outer bimodule structure \eqref{Eq:Mod-out} with swap given by the inner bimodule structure \eqref{Eq:Mod-in}, and vice-versa.
However, an arbitrary bimodule is not necessarily swap-commuting: given an anti-automorphism $\nu$ of $A$, the bimodule structure defined by 
\begin{equation}
    a \cdot d \cdot b = a d' \otimes \nu(b) d''\,, 
\end{equation}
does not necessarily commute with its swap bimodule structure. 

\begin{definition} \label{Def:Dbr}
Fix a swap-commuting $A$-bimodule structure $\cdot$ on $A^{\otimes 2}$. 
A \emph{double bracket (associated with $\cdot$)} on $A$ is a $\kk$-bilinear map 
$\dgal{-,-}: A\times A \to A \otimes A$ such that for any $a,b,c\in A$, 
\begin{subequations} \label{Eq:Dbr}
\begin{align}
  \dgal{a,b}=&-\dgal{b,a}^\circ\,,   \label{Eq:Dbr-cyc} \\
\dgal{a,bc}=&
b\cdot \dgal{a,c}
+ \dgal{a,b} \cdot c\,,  \label{Eq:Dbr-Der1} \\   
    \dgal{bc,a}=&
b\ast \dgal{c,a}
+ \dgal{b,a} \ast c\,,  \label{Eq:Dbr-Der2} 
\end{align}
\end{subequations}
where $\ast$ denotes the swap $A$-bimodule structure of $\cdot$.
\end{definition}

\begin{remark}
Van den Bergh's original definition  \cite{VdB1} corresponds to the outer bimodule $\cdot_{out}$ \eqref{Eq:Mod-out}.
\end{remark}

The properties \eqref{Eq:Dbr-cyc}, \eqref{Eq:Dbr-Der1} and \eqref{Eq:Dbr-Der2} of a double bracket are respectively called the (cyclic) antisymmetry, the right Leibniz rule, and the left Leibniz rule. 
It is important to note that the fixed bimodule structure $\cdot$ appears in the definition of the right Leibniz rule. Then, the Leibniz rules  \eqref{Eq:Dbr-Der1} and \eqref{Eq:Dbr-Der2} are equivalent due to the antisymmetry \eqref{Eq:Dbr-cyc},  by definition of $\ast$ as the swap $A$-bimodule structure of $\cdot$. 
Furthermore, the double bracket is well-defined as we can apply the Leibniz rules in any order because $\cdot$ is swap-commuting. 
Let us finally remark that Definition \eqref{Def:Dbr} can equivalently be restated in terms of a $\kk$-linear map $\dgal{-,-}:A\otimes A\to A \otimes A$, $a\otimes b \mapsto \dgal{a,b}$.

\begin{remark}
It is possible to work in even greater generalities by considering an arbitrary involution ${}^\circ : A^{\otimes 2}\to A^{\otimes 2}$, $d\mapsto d^\circ$, in the above definitions. Since all the examples presented in this paper rely on the involution ${}^\circ=\tau_{(12)}$, we do not consider this generalisation any further. 
\end{remark}

\subsection{Double (weak) Poisson brackets} \label{ss:wDBR}

Fix a double bracket $\dgal{-,-}$ on $A$. We can consider the map 
\begin{equation} \label{Eq:dbr-L}
\dgal{-,-}_L: A\times A^{\otimes 2} \to A^{\otimes 3}\,, \quad 
    \dgal{a,b_1\otimes b_2}_L=\dgal{a,b_1}\otimes b_2\,.
\end{equation}

\begin{definition} \label{Def:DPA}
The \emph{double Jacobiator} of $\dgal{-,-}$ is the map 
$\dgal{-,-,-}:A^{\times 3}\to A^{\otimes 3}$ that satisfies 
\begin{equation} \label{Eq:dJac}
    \dgal{a,b,c}=
\dgal{a, \dgal{b,c}}_L
+\tau_{(123)} \dgal{b, \dgal{c,a}}_L
+ \tau_{(132)} \dgal{c, \dgal{a,b}}_L \,.
\end{equation}
If the double Jacobiator identically vanishes, we say that $\dgal{-,-}$ is a \emph{double Poisson bracket}. We call the pair $(A,\dgal{-,-})$ a \emph{double Poisson algebra}. 
\end{definition}

The double Jacobiator is cyclically symmetric: 
\begin{equation}  \label{Eq:dJCyc}
\dgal{a,b,c}=\tau_{(123)} \dgal{b,c,a}=\tau_{(132)} \dgal{c,a,b}\,, \qquad \forall a,b,c\in A\,.
\end{equation}
In particular cases (depending on the underlying swap-commuting bimodule, see Section \ref{S:Exmp}), the double Jacobiator is a derivation in each argument for some specific bimodule structures on $A^{\otimes 3}$.  

\begin{remark}
A $\kk$-bilinear operation on $A$ (seen as a $\kk$-vector space) satisfying the cyclic antisymmetry \eqref{Eq:Dbr-cyc} and with vanishing double Jacobiator \eqref{Eq:dJac} is called a \emph{double Lie algebra}. 
Such structures have been independently introduced by Schedler \cite{Sch}, Odesskii, Rubtsov and Sokolov \cite{ORS2}, as well as De Sole, Kac and Valeri \cite{DSKV}.  
Therefore, our definition of double Poisson brackets extends the notion of a double Lie algebra in the presence of a multiplication on $A$ and a swap-commuting $A$-bimodule structure on $A^{\otimes 2}$. 
\end{remark}

While \eqref{Eq:dJCyc} tells us that the double Jacobiator satisfies 
$\dgal{-,-,-}=\tau_\sigma^{-1}\circ \dgal{-,-,-}\circ \tau_\sigma$ 
for $\sigma\in \{\id,(123),(132)\}$, this is not the case for the other permutations $\{(12),(13),(23)\}$. 

\begin{definition} \label{Def:wDPA} 
Let $\sigma\in \{(12),(13),(23)\}$. 
The $\sigma$\emph{-weak double Jacobiator} of $\dgal{-,-}$ is the map 
$\sigma\wk\!\dgal{-,-,-}:A^{\times 3}\to A^{\otimes 3}$ given by  
\begin{equation} \label{Eq:wdJac}
    \sigma\wk\!\dgal{-,-,-}=\dgal{-,-,-}\,-\,\tau_\sigma^{-1}\circ \dgal{-,-,-}\circ \tau_\sigma \,.
\end{equation}
If the $\sigma$-weak double Jacobiator identically vanishes, we say that $\dgal{-,-}$ is a \emph{double $\sigma$-weak Poisson bracket}. We call the pair $(A,\dgal{-,-})$ a \emph{double $\sigma$-weak Poisson algebra}. 
\end{definition}

In some cases, we will need to work in even greater generalities. 
\begin{definition} \label{Def:wDPA2} 
Let $\sigma,\sigma'\in \{(12),(13),(23)\}$. 
The $[\sigma,\sigma']$\emph{-weak double Jacobiator} of $\dgal{-,-}$ is the map 
$[\sigma,\sigma']\wk\!\dgal{-,-,-}:A^{\times 3}\to A^{\otimes 3}$ 
given by  
\begin{equation} \label{Eq:wdJac2}
    [\sigma,\sigma']\wk\!\dgal{-,-,-}=\dgal{-,-,-}\,-\,\tau_\sigma^{-1}\circ \dgal{-,-,-}\circ \tau_{\sigma'} \,.
\end{equation}
If the $[\sigma,\sigma']$-weak double Jacobiator identically vanishes, we say that $\dgal{-,-}$ is a \emph{double $[\sigma,\sigma']$-weak Poisson bracket}. We call the pair $(A,\dgal{-,-})$ a \emph{double $[\sigma,\sigma']$-weak Poisson algebra}. 
\end{definition}

Due to the definitions, it is obvious that a double Poisson bracket is a double $[\sigma,\sigma']$-weak Poisson bracket for any choice of  $\sigma,\sigma'\in \{(12),(13),(23)\}$ (hence $\sigma$-weak when $\sigma'=\sigma$). 
We also note that, due to \eqref{Eq:dJCyc} and $\tau_{(123)}^s\tau_\sigma=\tau_{\sigma}\tau_{(123)}^{-s}$, the $[\sigma,\sigma']$-weak double Jacobiator enjoys the cyclic symmetry\footnote{This is an important cyclicity property that we are willing to preserve. For that reason, Definition \ref{Def:wDPA2} does not encompass cases where $\sigma,\sigma'$ have different parity, such as $\sigma=(12)$ with $\sigma'=\id$.} 
\begin{equation}  \label{Eq:wdJCyc}
[\sigma,\sigma']\wk\!\dgal{a,b,c}
=\tau_{(123)} \,\, [\sigma,\sigma']\wk\!\dgal{b,c,a}
=\tau_{(132)} \,\, [\sigma,\sigma']\wk\!\dgal{c,a,b}\,, 
\qquad \forall a,b,c\in A\,.
\end{equation}
Moreover, we can get from \eqref{Eq:dJCyc} that the property of being 
$[\sigma,\sigma']$-weak Poisson, $[\sigma(123),(132)\sigma']$-weak Poisson, and $[\sigma(132),(123)\sigma']$-weak Poisson are equivalent. 
Therefore, it suffices to consider the following three groups of double (weak) Poisson brackets: 
\begin{itemize}
    \item double Poisson brackets, see Definition \ref{Def:DPA}; 
    \item double $(12)$-weak Poisson brackets, see Definition \ref{Def:wDPA};
    \item double $[(12),(13)]$-weak or $[(12),(23)]$-weak Poisson brackets, see Definition \ref{Def:wDPA2}.
\end{itemize}
These different cases will appear naturally as part of the families of examples considered in Section \ref{S:Exmp}. 

\subsection{General properties} \label{ss:GenDBr}

Fix a swap-commuting $A$-bimodule structure $\cdot$ on $A^{\otimes 2}$, and let $\dgal{-,-}$ be a double bracket associated with that bimodule. 
 For the next results, we consider a set of generators $S=\{a_j \mid j\in J\}$ for the algebra $A$.
\begin{lemma} \label{Lem:Gen}
The double bracket is completely determined by its value on the elements of $S$. 
Furthermore, if $<$ is a total order on $J$, the double bracket is completely determined by the elements  $\dgal{a_i,a_j}\in A^{\otimes 2}$ for $i<j$ in $J$, 
and  $\dgal{a_i,a_i}\in A^{\otimes 2}$ for $i\in J$.
\end{lemma}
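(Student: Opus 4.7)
The plan is a straightforward induction on word length, using the Leibniz rules to push the bracket onto generators, followed by the cyclic antisymmetry to halve the remaining data.

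First, since $S$ generates $A$ and $\dgal{-,-}$ is $\kk$-bilinear, it suffices to evaluate the bracket on pairs of monomials $x=a_{j_1}\cdots a_{j_m}$ and $y=a_{i_1}\cdots a_{i_n}$ in the generators. I would proceed by induction on the total length $m+n$. For $m=n=1$ there is nothing to do. For $n>1$, the right Leibniz rule \eqref{Eq:Dbr-Der1} writes
\[
\dgal{x,a_{i_1}\cdots a_{i_n}}
= (a_{i_1}\cdots a_{i_{n-1}}) \cdot \dgal{x,a_{i_n}}
+ \dgal{x,a_{i_1}\cdots a_{i_{n-1}}} \cdot a_{i_n}\,,
\]
so iterating reduces the second slot to single generators at the cost of bimodule multiplications. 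A symmetric iteration of the left Leibniz rule \eqref{Eq:Dbr-Der2} then reduces the first slot as well, reexpressing $\dgal{x,y}$ as a sum of $\cdot$- and $\ast$-bimodule multiples of the basic elements $\dgal{a_j,a_i}\in A^{\otimes 2}$. This proves the first statement.

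To see that the procedure is unambiguous, note that any two ways of bracketing the monomials $x$ and $y$ produce the same element: the right (resp. left) Leibniz rule is associative in $y$ (resp. in $x$) by the standard iteration, and the order in which one chooses to expand the first and second argument is irrelevant precisely because $\cdot$ is swap-commuting, so that the $\cdot$-multiplications coming from \eqref{Eq:Dbr-Der1} commute with the $\ast$-multiplications coming from \eqref{Eq:Dbr-Der2}, as already noted after Definition \ref{Def:Dbr}.

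Finally, the cyclic antisymmetry \eqref{Eq:Dbr-cyc} gives
\[
\dgal{a_j,a_i} = -\dgal{a_i,a_j}^\circ \qquad\text{for all } i,j\in J.
\]
If $<$ is a total order on $J$, then for $j>i$ the right-hand side is determined by the datum $\dgal{a_i,a_j}$ with $i<j$, while for $j=i$ the element $\dgal{a_i,a_i}$ must be prescribed (and is automatically constrained by $\dgal{a_i,a_i}=-\dgal{a_i,a_i}^\circ$, but this does not reduce it further). Combining with the first part yields the second statement.

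There is no real obstacle here; the only point requiring a moment of thought is the well-definedness of the iterated reduction, which is exactly why the swap-commutativity of $\cdot$ was built into Definition \ref{Def:Dbr}.
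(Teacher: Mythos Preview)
Your proof is correct and follows essentially the same route as the paper's: the paper simply records the closed double-sum formula
\[
\dgal{a,b}=\sum_{k=1}^{m}\sum_{l=1}^{n}
a_{i_1}\cdots a_{i_{k-1}} \ast a_{j_1}\cdots a_{j_{l-1}} \cdot
\dgal{a_{i_k},a_{j_l}}
\cdot a_{j_{l+1}}\cdots a_{j_n} \ast a_{i_{k+1}}\cdots a_{i_m}
\]
for monomials in one step (rather than phrasing it as induction on $m+n$) and then invokes \eqref{Eq:Dbr-cyc} exactly as you do. Your paragraph on unambiguity is a slight digression---since the lemma assumes a double bracket is already given, its values are fixed and there is no well-definedness issue---but it does no harm.
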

\begin{proof}
It follows from the Leibniz rules that for 
$a=a_{i_1}\ldots a_{i_m}\in A$ and  
$b=a_{j_1}\ldots a_{j_n}\in A$, 
with $i_1,\ldots,i_m,j_1,\ldots,j_n\in J$, we have 
\begin{equation*}
\dgal{a,b}=\sum_{k=1}^{m} \sum_{l=1}^{n}
a_{i_1}\ldots a_{i_{k-1}} \ast a_{j_1}  \ldots a_{j_{l-1}} \cdot 
\dgal{a_{i_k} , a_{j_l} }
\cdot a_{j_{l+1}}\ldots a_{j_n} \ast a_{i_{k+1}}\ldots a_{i_m}\,.
\end{equation*}
(We follow the convention that any empty product equals $1$.) 
Any two elements in $A$ are sums of terms of that form, so we can conclude by $\kk$-bilinearity. 

For the second part, the cyclic antisymmetry \eqref{Eq:Dbr-cyc} allows to get  $\dgal{a_j,a_i}=-\dgal{a_i,a_j}^\circ$ for $i<j$. Hence the value of the double bracket is known on the elements of $S$, and we can use the first part.  
\end{proof}

As a consequence of Lemma \ref{Lem:Gen}, we can easily construct double brackets. 

\begin{example} \label{Exmp:Free2}
On $A=\kk\langle x^{\pm 1}, y^{\pm1}\rangle$, fix $d_1,d_2,d_3\in A^{\otimes 2}$. Then the values 
\begin{equation*}
    \dgal{x,y}=d_1\,, \quad \dgal{x,x}=d_2 - d_2^\circ\,, \quad 
    \dgal{y,y}=d_3 - d_3^\circ\,,
\end{equation*}
completely characterise a double bracket on $A$ (for any fixed $A$-bimodule structure $\cdot$ on $A^{\otimes 2}$). 
We have $\dgal{y,x}=-d_1^\circ$ and, due to $\kk$-bilinearity, we can find the value of the double bracket on inverses, e.g.  
\begin{equation*}
    \dgal{a,b^{-1}}=-b^{-1} \cdot \dgal{a,b}\cdot b^{-1}\,, \quad 
    \dgal{a^{-1},b}=-a^{-1} \ast \dgal{a,b}\ast a^{-1}\,.
\end{equation*}
\end{example}

The next lemma (and its weak version, Lemma \ref{Lem:wJac}) will play an important role in Section \ref{S:Exmp}. 

\begin{lemma} \label{Lem:Jac}
Assume that there exists a $A$-bimodule structure on $A^{\otimes 3}$ for which  
the double Jacobiator \eqref{Eq:dJac} is a derivation in one of its arguments.  
Then it is a derivation in all of its arguments. Furthermore, 
the double bracket is Poisson if and only if the double Jacobiator vanishes on any $3$ elements of $S$. 
\end{lemma}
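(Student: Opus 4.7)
The proof plan naturally splits into two parts. The first part is to establish that being a derivation in one argument implies being a derivation in all three. The key ingredient is the cyclic symmetry \eqref{Eq:dJCyc}, $\dgal{a,b,c} = \tau_{(123)}\dgal{b,c,a} = \tau_{(132)}\dgal{c,a,b}$. Suppose $\dgal{-,-,-}$ is a derivation in, say, its first argument for some $A$-bimodule structure $\cdot$ on $A^{\otimes 3}$. I would define a new $A$-bimodule structure $\cdot'$ on $A^{\otimes 3}$ by conjugating $\cdot$ with the $\kk$-linear involution $\tau_{(123)}$, namely $a \cdot' T \cdot' b := \tau_{(132)}\bigl(a \cdot \tau_{(123)}(T) \cdot b\bigr)$. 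Plugging the identity $\dgal{a,b,c}=\tau_{(132)}\dgal{c,a,b}$ into the derivation rule for the first slot of $\dgal{c,a,b}$ then yields a derivation rule for the second slot of $\dgal{a,b,c}$ with respect to $\cdot'$. An analogous conjugation (with $\tau_{(123)}$ replaced by $\tau_{(132)}$) produces a bimodule structure $\cdot''$ making $\dgal{-,-,-}$ a derivation in its third argument.

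For the second part, the ``only if'' direction is immediate since the double Jacobiator vanishes identically. For the converse, assume $\dgal{a_i,a_j,a_k}=0$ for all $a_i,a_j,a_k\in S$. I would extend this vanishing one slot at a time using the derivation properties furnished by part one. Fix $a_i,a_j\in S$; the derivation rule in the third slot (with respect to the appropriate bimodule structure) together with induction on the length of a word in the generators $S$ shows $\dgal{a_i,a_j,c}=0$ for every $c\in A$. Repeating the argument with the derivation rule in the second slot extends this to $\dgal{a_i,b,c}=0$ for all $b,c\in A$, and one final application of the derivation rule in the first slot yields $\dgal{a,b,c}=0$ for all $a,b,c\in A$. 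Hence $\dgal{-,-}$ is Poisson.

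The main technical obstacle is in part one: checking that $\kk$-linear conjugation of an $A$-bimodule action by $\tau_{(123)}$ on $A^{\otimes 3}$ genuinely produces an $A$-bimodule structure, and that the transported derivation identity matches \eqref{Eq:dJac} in the correct argument. This is essentially bookkeeping, but care is needed because $\tau_{(123)}$ is only $\kk$-linear and not an algebra map from $A^{\otimes 3}$ to itself, so one must rely on the fact that bimodule axioms depend only on the $\kk$-linear structure. Note also that the bimodule structures witnessing the derivation property in each slot need not coincide, which is exactly what the statement allows.
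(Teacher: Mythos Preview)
Your approach matches the paper's exactly: conjugate the given bimodule action by the cyclic permutations to transport the derivation property to the remaining two slots, then extend the vanishing from generators one slot at a time using those derivation rules. Two harmless bookkeeping slips worth fixing: $\tau_{(123)}$ has order three (so it is not an involution), and the identity $\dgal{a,b,c}=\tau_{(132)}\dgal{c,a,b}$ together with the first-slot derivation rule on $\dgal{c,a,b}$ actually yields the derivation rule in the \emph{third} slot of $\dgal{a,b,c}$ (since $c$ sits in the first position of $(c,a,b)$ but the third of $(a,b,c)$) --- swapping which of the two cyclic identities you invoke for each target slot makes the claim go through.
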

\begin{proof}
Let us assume that the assumption is satisfied for the third argument, 
i.e. there exists a $A$-bimodule on $A^{\otimes 3}$, denoted $\cdot_3$, such that 
\begin{equation*}
    \dgal{a,b,c_1c_2}=c_1\cdot_3 \dgal{a,b,c_2} + \dgal{a,b,c_1} \cdot_3 c_2\,.
\end{equation*}
Using the cyclic symmetry \eqref{Eq:dJCyc}, we get derivation rules in the other arguments: 
\begin{align*}
    \dgal{a,b_1b_2,c}&=b_1\cdot_2 \dgal{a,b_2,c} + \dgal{a,b_1,c} \cdot_2 b_2\,, \\
    \dgal{a_1a_2,b,c}&=a_1\cdot_1 \dgal{a_2,b,c} + \dgal{a_1,b,c} \cdot_1 c_2\,,
\end{align*}
where we define the $A$-bimodules $\cdot_{1,2}$ on $A^{\otimes 3}$   by 
$$\cdot_i=\tau_{(132)}\circ  \cdot_{i+1} \circ  (\id_A\times \tau_{(123)}\times \id_A) \,:\, 
A\times A^{\otimes 3}\times A\to A^{\otimes 3}\,, \qquad i=1,2\,.$$ 
For the second part of the statement, the forward part is obvious. For the backward part, we can write any double Jacobiator as a sum over $j,k,l\in S$ whose terms are obtained by applying some bimodule multiplications $\cdot_i$, $1\leq i \leq 3$, to $\dgal{a_j,a_k,a_l}$. 
If each $\dgal{a_j,a_k,a_l}=0$, any double Jacobiator vanishes. 
This proof is easily adapted if we start with a derivation in the other two arguments. 
\end{proof}

\begin{lemma} \label{Lem:Jac2}
Under the assumptions of Lemma \ref{Lem:Jac}, 
if the double bracket takes constant values on the elements of $S$ (i.e. $\dgal{a_i,a_j}=\lambda_{ij}\, 1\otimes 1$ for $\lambda_{ij}\in \kk$, with $\lambda_{ij}=-\lambda_{ji}$), then it is Poisson. 
\end{lemma}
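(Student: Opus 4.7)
The plan is to show that the double Jacobiator already vanishes on any triple of generators $a_i, a_j, a_k \in S$, and then invoke Lemma \ref{Lem:Jac} to conclude that $\dgal{-,-}$ is Poisson.

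The key preliminary observation is that $\dgal{a,1} = 0$ for every $a \in A$. This is obtained by applying the right Leibniz rule \eqref{Eq:Dbr-Der1} to $b = c = 1$: since the bimodule structure $\cdot$ on $A^{\otimes 2}$ is unital (one verifies this directly for each of \eqref{Eq:Mod-L}--\eqref{Eq:Mod-in}, and it holds for any bimodule we would consider), we get
\begin{equation*}
\dgal{a,1} = \dgal{a, 1\cdot 1} = 1 \cdot \dgal{a,1} + \dgal{a,1}\cdot 1 = 2\dgal{a,1}\,,
\end{equation*}
which forces $\dgal{a,1} = 0$ because $\operatorname{char}(\kk) = 0$.

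Now consider the double Jacobiator $\dgal{a_i, a_j, a_k}$ for $i,j,k \in J$. By definition \eqref{Eq:dJac}, it is the sum of three terms of the form $\dgal{a, \dgal{b,c}}_L$ (up to cyclic permutation of the tensor factors by $\tau_{(123)}$ or $\tau_{(132)}$), where $a,b,c$ range over $a_i, a_j, a_k$. By assumption, $\dgal{b,c} = \lambda_{bc}\, 1 \otimes 1$ for some scalar $\lambda_{bc} \in \kk$, so using $\kk$-bilinearity and the definition \eqref{Eq:dbr-L} of $\dgal{-,-}_L$,
\begin{equation*}
\dgal{a, \dgal{b,c}}_L = \lambda_{bc} \dgal{a,1} \otimes 1 = 0\,.
\end{equation*}
Hence all three summands of $\dgal{a_i, a_j, a_k}$ vanish, so the double Jacobiator is identically zero on $S^{\times 3}$.

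Finally, Lemma \ref{Lem:Jac} applies by hypothesis, and its second statement immediately yields that $\dgal{-,-}$ is a double Poisson bracket. There is no real obstacle here: the content of the argument is just the standard unital-bimodule identity $\dgal{a,1}=0$ combined with the fact that $1\otimes 1$ generates a one-dimensional subspace on which the inner double bracket $\dgal{a,-}_L$ acts trivially.
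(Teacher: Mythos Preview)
Your proof is correct and follows essentially the same line as the paper's: show that $\dgal{a_j,\dgal{a_k,a_l}}_L=\lambda_{kl}\,\dgal{a_j,1}\otimes 1=0$ for generators, hence the double Jacobiator vanishes on $S^{\times 3}$, and conclude by Lemma~\ref{Lem:Jac}. The only difference is that you spell out the standard unital-bimodule argument for $\dgal{a,1}=0$, which the paper leaves implicit.
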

\begin{proof}
By $\kk$-bilinearity, $\dgal{a_j, \dgal{a_k,a_l}}_L=\lambda_{kl} \, \dgal{a_j,1}\otimes 1=0$. Thus,  the double Jacobiator vanishes on any triple from $S$ by \eqref{Eq:dJac}, and we can conclude using Lemma \ref{Lem:Jac}. 
\end{proof}

\begin{example}
All double brackets with constant values on generators of $A=\kk\langle x^{\pm 1}, y^{\pm1}\rangle$ (which arise as a special case of Example \ref{Exmp:Free2}) must satisfy  
$\dgal{x,y}=\lambda \, 1\otimes 1$ for $\lambda\in \kk$ and $\dgal{x,x}=0=\dgal{y,y}$. 
Provided that the double Jacobiator is a derivation in its third argument, these are double Poisson brackets by Lemma \ref{Lem:Jac2}. 
\end{example}

We finish by giving equivalent definitions of the double Jacobiator \eqref{Eq:dJac}. To this end, we introduce in analogy with \eqref{Eq:dbr-L} the following maps 
\begin{subequations}
\begin{align}
&\dgal{-,-}_R: A\times A^{\otimes 2} \to A^{\otimes 3}\,, \quad 
    \dgal{a,b_1\otimes b_2}_R=b_1\otimes \dgal{a,b_2}\,, \\
&\dgal{-,-}_L: A^{\otimes 2}\times A \to A^{\otimes 3}\,, \quad 
\dgal{a_1\otimes a_2,b}_L=\dgal{a_1,b}'\otimes a_2 \otimes \dgal{a_1,b}''\,, \\
&\dgal{-,-}_R: A^{\otimes 2}\times A \to A^{\otimes 3}\,, \quad 
\dgal{a_1\otimes a_2,b}_R=a_1\otimes \dgal{a_2,b} \,. 
\label{Eq:RightDmap}
\end{align}
\end{subequations}
(Our convention for $\dgal{a_1\otimes a_2,b}_R$ is \emph{different} from the one in \cite{DSKV}.)

\begin{lemma}
For any $a,b,c\in A$, we have that 
\begin{align}
    \dgal{a,b,c}&=
\dgal{a, \dgal{b,c}}_L
- \dgal{b, \dgal{c,a}}_R
- \dgal{\dgal{a,b},c}_L\,, \label{Eq:dJac-DSKV} \\
\dgal{a,b,c}&=
-\dgal{b, \dgal{a,c}}_R
-\tau_{(123)} \dgal{c, \dgal{b,a}}_R
-\tau_{(132)} \dgal{a,\dgal{c,b}}_R\,, \label{Eq:dJac-R}\\
\dgal{a,b,c}&=
\tau_{(12)}\left( 
\dgal{\dgal{b,a},c}_R
+\tau_{(123)} \dgal{\dgal{a,c},b}_R
+\tau_{(132)} \dgal{\dgal{c,b},a}_R\right)
\, \label{Eq:dJac-InR}\,.
\end{align}
Equivalently, we can write \eqref{Eq:dJac-R} and \eqref{Eq:dJac-InR} as  
\begin{align*}
\dgal{-,-,-}&=-\sum_{s=0,1,2}\, \tau_{(123)}^s \circ \dgal{-,\dgal{-,-}}_R\circ \tau_{(12)}\circ \tau_{(123)}^{-s}\,, \\
\dgal{-,-,-}&=
\sum_{s=0,1,2}\, \tau_{(12)}\circ \tau_{(123)}^s \circ \dgal{\dgal{-,-},-}_R\circ \tau_{(123)}^{-s}\circ \tau_{(12)}
\,.
\end{align*}
\end{lemma}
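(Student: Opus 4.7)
The plan is to derive all three equivalent forms \eqref{Eq:dJac-DSKV}--\eqref{Eq:dJac-InR} and their compact versions from the defining formula \eqref{Eq:dJac} of $\dgal{-,-,-}$ by applying the cyclic antisymmetry \eqref{Eq:Dbr-cyc} either to the inner or the outer bracket appearing in each of the three cyclic terms. The entire proof reduces to two elementary swap identities combined with careful bookkeeping of permutations in $S_3$.

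First I would establish the \emph{inner swap identity}: for all $x,y,z\in A$,
\[
\dgal{x,\dgal{y,z}}_L \,=\, -\tau_{(132)}\, \dgal{x,\dgal{z,y}}_R.
\]
This follows by direct computation. Writing $\dgal{z,y}=\sum_i p_i\otimes q_i$ so that $\dgal{y,z}=-\sum_i q_i\otimes p_i$, the left-hand side expands as $-\sum_i \dgal{x,q_i}\otimes p_i$, while $\dgal{x,\dgal{z,y}}_R=\sum_i p_i\otimes \dgal{x,q_i}$, and applying $\tau_{(132)}$ (which cyclically moves the first slot to the third) identifies the two up to sign. Applying this identity to each of the three cyclic terms of \eqref{Eq:dJac} via the substitutions $(x,y,z)=(a,b,c),(b,c,a),(c,a,b)$, together with the compositional identities $\tau_{(123)}\tau_{(132)}=\id$ and $\tau_{(132)}^2=\tau_{(123)}$, immediately yields equation \eqref{Eq:dJac-R}. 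The first compact form then follows by recognising the three resulting summands as the $s=0,1,2$ orbit of $\dgal{-,\dgal{-,-}}_R$ under cyclic conjugation by $\tau_{(123)}$, with the auxiliary $\tau_{(12)}$ encoding the inner swap.

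Next I would establish the pair of \emph{outer swap identities}: for all $x,y,z\in A$,
\[
\tau_{(132)}\dgal{z,\dgal{x,y}}_L \,=\, -\dgal{\dgal{x,y},z}_L \,=\, \tau_{(12)}\dgal{\dgal{y,x},z}_R,
\]
again by direct computation, applying the cyclic antisymmetry to $\dgal{u,z}$ for $u$ ranging over the slots of $\dgal{x,y}$ (for the first equality) and also to $\dgal{y,x}$ itself (for the second). Equation \eqref{Eq:dJac-DSKV} then follows from \eqref{Eq:dJac} by leaving the first cyclic term unchanged, applying the inner swap identity to the second, and applying the first outer swap identity to the third. Equation \eqref{Eq:dJac-InR} follows by applying the second outer swap identity to each of the three cyclic terms; the three required identities can all be derived from the single specialisation $\tau_{(132)}\dgal{Z,\dgal{X,Y}}_L = \tau_{(12)}\dgal{\dgal{Y,X},Z}_R$ at $(X,Y,Z)=(a,b,c),(c,a,b),(b,c,a)$, combined with the straightforward rearrangements using $\tau_{(123)}\tau_{(132)}=\id$. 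The second compact formula is then obtained by the same cyclic re-indexing.

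The main obstacle is purely combinatorial: one must carefully identify which permutation in $S_3$ arises each time the double bracket is applied to a particular slot of a tensor in $A^{\otimes 2}$, and then correctly compose these permutations with the cyclic rotations $\tau_{(123)}$ and $\tau_{(132)}$. Beyond this bookkeeping, the only structural ingredient used is a single application of the cyclic antisymmetry \eqref{Eq:Dbr-cyc} in each swap identity.
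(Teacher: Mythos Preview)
Your proposal is correct and follows essentially the same approach as the paper. The paper's proof simply records the two key identities --- $\dgal{a,\dgal{b,c}}_L=-\tau_{(132)}\dgal{a,\dgal{c,b}}_R$ (your ``inner swap'') for \eqref{Eq:dJac-R} and $\dgal{a,\dgal{b,c}}_L=\tau_{(13)}\dgal{\dgal{c,b},a}_R$ (equivalent to your ``outer swap'' after a cyclic shift, since $\tau_{(123)}\tau_{(12)}=\tau_{(13)}$) for \eqref{Eq:dJac-InR} --- and for \eqref{Eq:dJac-DSKV} it merely cites \cite[Remark~2.2]{DSKV}; you spell out the same reductions explicitly and handle \eqref{Eq:dJac-DSKV} directly rather than by citation.
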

\begin{proof}
Equation \eqref{Eq:dJac-DSKV} is Remark 2.2 in \cite{DSKV}. Indeed, the definition of the double Jacobiator \eqref{Eq:dJac} does not require to know the underlying swap-commuting $A$-bimodule structure. 

For \eqref{Eq:dJac-R} this follows from $\dgal{a,\dgal{b,c}}_L=-\tau_{(132)}\dgal{a,\dgal{c,b}}_R$. For  \eqref{Eq:dJac-InR}, it is a consequence of  $\dgal{a,\dgal{b,c}}_L=\tau_{(13)}\dgal{\dgal{c,b},a}_R$. 
\end{proof}

Similar statements hold in the weak case. We only state the following two results (the $\sigma$-weak case follows by taking $\sigma'=\sigma$), which are easily adapted from Lemmas \ref{Lem:Jac} and \ref{Lem:Jac2} using the cyclic symmetry \eqref{Eq:wdJCyc} of the $[\sigma,\sigma']$-weak double Jacobiator.  
\begin{lemma} \label{Lem:wJac}
Let $\sigma,\sigma'\in \{(12),(13),(23)\}$. 
Assume that there exists a $A$-bimodule structure on $A^{\otimes 3}$ for which  
the $[\sigma,\sigma']$-weak double Jacobiator \eqref{Eq:wdJac2} is a derivation in one of its arguments. 
Then it is a derivation in all of its arguments. Furthermore, 
the double bracket is $[\sigma,\sigma']$-weak Poisson if and only if the $[\sigma,\sigma']$-weak double Jacobiator vanishes on any $3$ elements of $S$. 
\end{lemma}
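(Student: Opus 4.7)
The plan is to adapt the argument of Lemma \ref{Lem:Jac} almost verbatim, with the only essential new ingredient being the cyclic symmetry \eqref{Eq:wdJCyc} of the $[\sigma,\sigma']$-weak double Jacobiator in place of \eqref{Eq:dJCyc}. Since the cyclic symmetry property is what powered the transport of the derivation property from one argument to the others in Lemma \ref{Lem:Jac}, the very same transport is available in the weak case.

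First, suppose the hypothesis holds in the third argument, so that there is an $A$-bimodule structure $\cdot_3$ on $A^{\otimes 3}$ with
\[
[\sigma,\sigma']\wk\!\dgal{a,b,c_1 c_2}
= c_1 \cdot_3 [\sigma,\sigma']\wk\!\dgal{a,b,c_2}
+ [\sigma,\sigma']\wk\!\dgal{a,b,c_1} \cdot_3 c_2\,.
\]
Applying \eqref{Eq:wdJCyc} to rewrite $[\sigma,\sigma']\wk\!\dgal{a,b_1 b_2,c}$ in terms of $[\sigma,\sigma']\wk\!\dgal{b_1 b_2, c, a}$ (and similarly for the first argument), I define bimodule structures $\cdot_1, \cdot_2$ on $A^{\otimes 3}$ by the same formula
\[
\cdot_i = \tau_{(132)} \circ \cdot_{i+1} \circ (\id_A \times \tau_{(123)} \times \id_A)
\,:\, A \times A^{\otimes 3} \times A \to A^{\otimes 3}, \qquad i=1,2,
\]
used in the proof of Lemma \ref{Lem:Jac}. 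The derivation rule in the third slot for the weak Jacobiator is then translated (via \eqref{Eq:wdJCyc}) into the corresponding derivation rules in the second and first slot with respect to $\cdot_2$ and $\cdot_1$. If the hypothesis is instead placed on the first or second argument, one rotates the labels accordingly — no new difficulty arises since \eqref{Eq:wdJCyc} is fully cyclic.

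For the second assertion, the forward direction is immediate from the definition of the property $[\sigma,\sigma']$-weak Poisson. For the converse, fix arbitrary $a = a_{i_1}\cdots a_{i_m}$, $b = a_{j_1}\cdots a_{j_n}$, $c = a_{k_1}\cdots a_{k_p}$ in $A$ built from $S$. Iterating the derivation rules established above in each of the three arguments expresses $[\sigma,\sigma']\wk\!\dgal{a,b,c}$ as a finite sum of terms, each obtained by applying the bimodule actions $\cdot_1, \cdot_2, \cdot_3$ to an expression of the form $[\sigma,\sigma']\wk\!\dgal{a_{i_r}, a_{j_s}, a_{k_t}}$ for some generators. By hypothesis each such term is zero, and by $\kk$-bilinearity the weak Jacobiator vanishes identically on $A^{\times 3}$.

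I do not expect any genuine obstacle: the only subtlety is to check that the cyclic symmetry on which the proof relies is really available for $[\sigma,\sigma']$-weak Jacobiators, and this is precisely what \eqref{Eq:wdJCyc} asserts (and is the reason Definition \ref{Def:wDPA2} was set up to exclude pairs $\sigma,\sigma'$ of different parities). Once that cyclicity is in hand, the transport of derivation properties and the reduction to generators are mechanical.
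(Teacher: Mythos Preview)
Your proposal is correct and follows exactly the approach the paper intends: adapt the proof of Lemma~\ref{Lem:Jac} verbatim, replacing the cyclic symmetry \eqref{Eq:dJCyc} of the double Jacobiator by the cyclic symmetry \eqref{Eq:wdJCyc} of the $[\sigma,\sigma']$-weak double Jacobiator. The paper in fact does not write out a separate proof for this lemma, stating only that it is ``easily adapted from Lemmas~\ref{Lem:Jac} and~\ref{Lem:Jac2} using the cyclic symmetry \eqref{Eq:wdJCyc}''; your write-up supplies precisely those details.
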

\begin{lemma} \label{Lem:wJac2}
Under the assumptions of Lemma \ref{Lem:wJac}, 
if the double bracket takes constant values on the elements of $S$, then it is $[\sigma,\sigma']$-weak Poisson. 
\end{lemma}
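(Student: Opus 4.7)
The plan is to mirror the proof of Lemma \ref{Lem:Jac2} as closely as possible, reducing the weak statement to the stronger fact that the (full) double Jacobiator already vanishes on triples of elements of $S$. Once this reduction is in place, Lemma \ref{Lem:wJac} finishes the job.

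First, I would re-examine the argument of Lemma \ref{Lem:Jac2}. The key observation there is purely algebraic and \emph{does not} use any Jacobi-type hypothesis: applying the right Leibniz rule \eqref{Eq:Dbr-Der1} to $1=1\cdot 1$ gives $\dgal{a,1}=1\cdot\dgal{a,1}+\dgal{a,1}\cdot 1=2\dgal{a,1}$, hence $\dgal{a,1}=0$ for every $a\in A$. Since $\dgal{a_k,a_l}=\lambda_{kl}\,1\otimes 1$ is constant, $\kk$-bilinearity yields $\dgal{a_j,\dgal{a_k,a_l}}_L=\lambda_{kl}\,\dgal{a_j,1}\otimes 1=0$, and the same computation applies to the other two terms in the definition \eqref{Eq:dJac}. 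Consequently $\dgal{a_i,a_j,a_k}=0$ for every triple $(a_i,a_j,a_k)\in S^{\times 3}$.

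Next I would read off the consequence for the $[\sigma,\sigma']$-weak Jacobiator. By definition \eqref{Eq:wdJac2},
\begin{equation*}
[\sigma,\sigma']\wk\!\dgal{a_i,a_j,a_k}
=\dgal{a_i,a_j,a_k}-\tau_\sigma^{-1}\!\left(\dgal{\tau_{\sigma'}(a_i,a_j,a_k)}\right).
\end{equation*}
The permutation $\tau_{\sigma'}$ merely reorders the triple $(a_i,a_j,a_k)$ into another triple of elements of $S$, on which the previous paragraph shows that the full double Jacobiator still vanishes; applying $\tau_\sigma^{-1}$ to zero leaves zero. Therefore the $[\sigma,\sigma']$-weak double Jacobiator vanishes on every triple in $S^{\times 3}$.

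Finally, the hypotheses of Lemma \ref{Lem:wJac} are in force by assumption, so the weak double Jacobiator is a derivation in each of its three arguments for suitable bimodule structures on $A^{\otimes 3}$. Vanishing on the generating set $S$ then forces it to vanish on all of $A^{\times 3}$, which is precisely the assertion that $\dgal{-,-}$ is $[\sigma,\sigma']$-weak Poisson. The only place where one has to be slightly careful is verifying that the $[\sigma,\sigma']$-weak Jacobiator really is a derivation in every argument (and not only one), but this is exactly what the statement of Lemma \ref{Lem:wJac} provides, via the cyclic symmetry \eqref{Eq:wdJCyc}; I do not expect any genuine obstacle beyond checking that the algebraic identity $\dgal{a,1}=0$ continues to hold in this setting, which it does.
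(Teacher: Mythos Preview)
Your proof is correct and follows essentially the same approach as the paper, which simply states that Lemmas~\ref{Lem:wJac} and~\ref{Lem:wJac2} are ``easily adapted from Lemmas \ref{Lem:Jac} and \ref{Lem:Jac2} using the cyclic symmetry \eqref{Eq:wdJCyc}.'' You have unpacked this adaptation explicitly: the full double Jacobiator already vanishes on triples from $S$ (exactly as in Lemma~\ref{Lem:Jac2}), hence so does the $[\sigma,\sigma']$-weak one, and Lemma~\ref{Lem:wJac} then propagates this to all of $A^{\times 3}$.
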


\subsection{Morphisms and equivalences}  \label{ss:MorEq}

For $i=1,2$, let $A_i$ be equipped with a double bracket $\dgal{-,-}_i$ associated with a swap-commuting bimodule structure $\cdot_i$ on $A_i^{\otimes 2}$. 

\begin{definition} \label{Def:Morph}
A \emph{morphism of double brackets} is an algebra homomorphism $\phi:A_1 \to A_2$  such that  
\begin{equation} \label{Eq:morph}
    \dgal{\phi(a),\phi(b)}_2 = (\phi\otimes \phi)(\dgal{a,b}_1)\,, \qquad 
    \forall a,b\in A_1\,.
\end{equation}
If both double brackets $\dgal{-,-}_i$ are Poisson (resp. $\sigma$-weak Poisson or $[\sigma,\sigma']$-weak Poisson), we say that $\phi$ is a \emph{morphism of double Poisson algebras} 
(resp. \emph{of double $\sigma$-weak Poisson algebras} or \emph{of double $[\sigma,\sigma']$-weak Poisson algebras}).  
\end{definition}

In the presence of a morphism of double brackets, the two bimodule structures could be unrelated. Indeed, any algebra homomorphism  is a morphism of double Poisson algebras if we take the zero double bracket on both algebras (for  \emph{any two} swap-commuting bimodule structures). 
Nevertheless, the bimodule structures are not arbitrary in general, since the Leibniz rules yield identities such as 
\begin{equation*}
    \phi(b)\cdot_2 \dgal{\phi(a),\phi(c)}_2
+ \dgal{\phi(a),\phi(b)}_2 \cdot_2 \phi(c) 
= (\phi\otimes \phi)(b\cdot_1\dgal{a,c}_1)
+(\phi\otimes \phi)(\dgal{a,b}_1\cdot_1 c)\,, \quad a,b,c\in A_1\,.
\end{equation*}
In particular, we can consider morphisms when the two double brackets are associated with the left bimodule structure \eqref{Eq:Mod-L} on $A_1$ and $A_2$ respectively. This observation can also be made for the right  \eqref{Eq:Mod-R}, outer \eqref{Eq:Mod-out} or inner \eqref{Eq:Mod-in} bimodule structures, see  \ref{ss:Exmp-Morph} for some examples. 

A morphism of double brackets transforms the double Jacobiators \eqref{Eq:dJac} according to  
\begin{equation} \label{Eq:Mor-dJac}
    \dgal{\phi(a),\phi(b),\phi(c)}_2 = \phi^{\otimes 3}(\dgal{a,b,c}_1)\,, \qquad a,b,c\in A_1\,.
\end{equation} 
As a consequence, we get the following generalisations of \cite[Lemma 3.2]{F22}.

\begin{lemma} \label{Lem:Morph-Poiss}
Let  $\phi:A_1 \to A_2$ be a morphism of double brackets. 

\noindent If $\phi$ is surjective as an algebra homorphism and $A_1$ is a double Poisson algebra, then $A_2$ is a double Poisson algebra.
 If $\phi$ is injective as an algebra homorphism and $A_2$ is a double Poisson algebra, then $A_1$ is a double Poisson algebra. 
 In both cases, $\phi$ is a morphism of double Poisson algebras. 
\end{lemma}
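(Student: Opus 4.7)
The plan is to reduce everything to the compatibility relation \eqref{Eq:Mor-dJac}, which states that $\dgal{\phi(a),\phi(b),\phi(c)}_2 = \phi^{\otimes 3}(\dgal{a,b,c}_1)$ for all $a,b,c\in A_1$. Once this identity is granted (which the excerpt already establishes immediately after Definition \ref{Def:Morph}), both implications are essentially one-line consequences.

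For the surjective case, I would assume $A_1$ is a double Poisson algebra, so $\dgal{a,b,c}_1=0$ for every triple in $A_1$. Applying \eqref{Eq:Mor-dJac} gives $\dgal{\phi(a),\phi(b),\phi(c)}_2=0$ for all $a,b,c\in A_1$. Since $\phi$ is surjective, every triple $(x,y,z)\in A_2^{\times 3}$ arises as $(\phi(a),\phi(b),\phi(c))$, so the double Jacobiator of $\dgal{-,-}_2$ vanishes identically on $A_2$.

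For the injective case, I would assume $A_2$ is a double Poisson algebra and fix arbitrary $a,b,c\in A_1$. Then \eqref{Eq:Mor-dJac} yields $\phi^{\otimes 3}(\dgal{a,b,c}_1)=\dgal{\phi(a),\phi(b),\phi(c)}_2=0$. The only point requiring a moment of thought is that $\phi^{\otimes 3}:A_1^{\otimes 3}\to A_2^{\otimes 3}$ is injective. This is where the hypothesis that we work over a field $\kk$ of characteristic $0$ intervenes: since $A_1$ and $A_2$ are $\kk$-vector spaces and $\phi$ is an injective $\kk$-linear map, we may regard $A_1$ as a subspace of $A_2$ and pick a complement, whence tensor products preserve the injection. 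Thus $\dgal{a,b,c}_1=0$, and $A_1$ is a double Poisson algebra.

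In both cases, the last sentence of the lemma is automatic: $\phi$ was already a morphism of double brackets by assumption, and having just verified that both $(A_1,\dgal{-,-}_1)$ and $(A_2,\dgal{-,-}_2)$ are double Poisson algebras, $\phi$ qualifies as a morphism of double Poisson algebras in the sense of Definition \ref{Def:Morph}. There is no genuine obstacle here; the only subtlety worth pointing out is the use of injectivity of $\phi^{\otimes 3}$, which rests on the flatness of $\kk$-vector spaces.
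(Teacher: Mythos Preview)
Your proof is correct and follows exactly the approach the paper intends: the paper states that Lemma~\ref{Lem:Morph-Poiss} is a direct consequence of the identity \eqref{Eq:Mor-dJac} (and cites \cite[Lemma 3.2]{F22} for the details), and you have spelled out precisely that argument. One minor imprecision: the injectivity of $\phi^{\otimes 3}$ only requires that $\kk$ be a field (so that every $\kk$-module is flat), not that it have characteristic~$0$; the latter hypothesis plays no role here.
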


\begin{lemma} \label{Lem:Morph-wkPoiss}
Let  $\phi:A_1 \to A_2$ be a morphism of double brackets, and $\sigma,\sigma'\in \{(12),(13),(23)\}$. 

\noindent If $\phi$ is surjective as an algebra homorphism and $A_1$ is a double $[\sigma,\sigma']$-weak Poisson algebra, then $A_2$ is a double $[\sigma,\sigma']$-weak Poisson algebra.
 If $\phi$ is injective as an algebra homorphism and $A_2$ is a double $[\sigma,\sigma']$-weak Poisson algebra, then $A_1$ is a double $[\sigma,\sigma']$-weak Poisson algebra. 
 In both cases, $\phi$ is a morphism of double $[\sigma,\sigma']$-weak Poisson\footnote{All statements work for the $\sigma$-weak case by taking $\sigma'=\sigma$.} algebras. 
\end{lemma}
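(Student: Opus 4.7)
The plan is to mimic the proof of Lemma \ref{Lem:Morph-Poiss} by first establishing the analogue of equation \eqref{Eq:Mor-dJac} for the $[\sigma,\sigma']$-weak double Jacobiator, and then exploiting surjectivity/injectivity of $\phi^{\otimes 3}$ as in the non-weak case.

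First I would show that any morphism of double brackets $\phi : A_1 \to A_2$ intertwines the weak double Jacobiators, i.e.
\begin{equation*}
[\sigma,\sigma']\wk\!\dgal{\phi(a),\phi(b),\phi(c)}_2 = \phi^{\otimes 3}\bigl([\sigma,\sigma']\wk\!\dgal{a,b,c}_1\bigr)\,, \qquad a,b,c\in A_1\,.
\end{equation*}
This follows from \eqref{Eq:Mor-dJac} together with the observation that the permutation operators $\tau_\sigma^{-1}$ on $A_2^{\otimes 3}$ commute with $\phi^{\otimes 3}$ (they only permute tensor factors) and $\tau_{\sigma'}$ on $A_1^{\times 3}$ commutes with $\phi^{\times 3}$; applying $\tau_\sigma^{-1}\circ \dgal{-,-,-}_2 \circ \tau_{\sigma'}$ to $(\phi(a),\phi(b),\phi(c))$ and using \eqref{Eq:Mor-dJac} yields exactly $\phi^{\otimes 3}\circ \tau_\sigma^{-1}\circ \dgal{-,-,-}_1 \circ \tau_{\sigma'}$ applied to $(a,b,c)$. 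Subtracting these identities gives the displayed equation.

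Next, assume $\phi$ is surjective and $A_1$ is double $[\sigma,\sigma']$-weak Poisson. For any $x,y,z\in A_2$, choose preimages $a,b,c\in A_1$; then the displayed identity gives $[\sigma,\sigma']\wk\!\dgal{x,y,z}_2 = \phi^{\otimes 3}(0) = 0$, so $A_2$ is double $[\sigma,\sigma']$-weak Poisson. Conversely, assume $\phi$ is injective and $A_2$ is double $[\sigma,\sigma']$-weak Poisson. Since $\phi:A_1\to A_2$ is a $\kk$-linear injection between $\kk$-vector spaces, $\phi^{\otimes 3}:A_1^{\otimes 3}\to A_2^{\otimes 3}$ is also injective. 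For $a,b,c\in A_1$ the displayed identity yields $\phi^{\otimes 3}([\sigma,\sigma']\wk\!\dgal{a,b,c}_1)=0$, whence $[\sigma,\sigma']\wk\!\dgal{a,b,c}_1=0$ and $A_1$ is double $[\sigma,\sigma']$-weak Poisson. In either case $\phi$ is a morphism of double $[\sigma,\sigma']$-weak Poisson algebras by Definition \ref{Def:Morph}.

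There is no real obstacle here: the argument is purely formal once one checks that $\tau_\sigma^{-1}$ and $\tau_{\sigma'}$ interact well with $\phi^{\otimes 3}$ and $\phi^{\times 3}$, which amounts to the general fact that tensor-factor permutations are natural in the algebra. The only subtlety worth emphasising is injectivity of $\phi^{\otimes 3}$, which holds because we work over a field (so tensoring by $\phi$ preserves injections), and the fact that the footnote case $\sigma'=\sigma$ is covered automatically.
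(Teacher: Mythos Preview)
Your proof is correct and follows essentially the same approach as the paper: you establish the intertwining identity $[\sigma,\sigma']\wk\!\dgal{\phi(a),\phi(b),\phi(c)}_2 = \phi^{\otimes 3}([\sigma,\sigma']\wk\!\dgal{a,b,c}_1)$ from \eqref{Eq:Mor-dJac} and the naturality of the permutation operators, then deduce both cases from surjectivity/injectivity of $\phi^{\otimes 3}$. The paper's proof is terser, deferring the second step to \cite[Lemma 3.2]{F22}, whereas you spell out the surjective/injective argument explicitly (including the point that $\phi^{\otimes 3}$ is injective over a field), but the substance is identical.
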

\begin{proof}
Knowing the transformation of the double Jacobiator \eqref{Eq:Mor-dJac}, we have by \eqref{Eq:wdJac2} the same transformation for the $[\sigma,\sigma']$-weak double Jacobiator: 
$[\sigma,\sigma']\wk\!\dgal{\phi(a),\phi(b),\phi(c)}_2 
= \phi^{\otimes 3}([\sigma,\sigma']\wk\!\dgal{a,b,c}_1)$. 
So this follows by a straightforward adaptation of \cite[Lemma 3.2]{F22}. 
\end{proof}

\begin{definition} \label{Def:wMorph}
A \emph{warped morphism of double brackets} is an algebra homomorphism $\psi:A_1 \to A_2$ such that there exists a morphism $\Psi:A_1^{\otimes 2}\to A_2^{\otimes 2}$ that intertwines the bimodule structures, i.e. 
\begin{equation} \label{Eq:wBimod}
\Psi(a \cdot_1 d \cdot_1 b) = \psi(a) \cdot_2 \Psi(d) \cdot_2 \psi(b)\,, \qquad 
a,b\in A_1,\,\, d\in A_1^{\otimes 2}\,,
\end{equation}
in such a way that $\Psi(\dgal{a,b}_1)=\dgal{\psi(a),\psi(b)}_2$.  
\end{definition}

A morphism of double brackets is easily seen to correspond to the case of a warped morphism of double brackets  where  $\Psi=\psi\otimes \psi$. 
If we denote by $\ast_i$ the swap bimodule of $\cdot_i$, $i=1,2$, then the analogue of identity \eqref{Eq:wBimod} with the corresponding swap bimodule structures is 
\begin{equation} \label{Eq:wBimod-Swap}
\widetilde{\Psi}(a \ast_1 d \ast_1 b) = \psi(a) \ast_2 \widetilde{\Psi}(d) \ast_2 \psi(b)\,, \qquad 
a,b\in A_1,\,\, d\in A_1^{\otimes 2}\,,
\end{equation}
where $\widetilde{\Psi}:A_1^{\otimes 2}\to A_2^{\otimes 2}$, $\widetilde{\Psi}(d)=\Psi(d^\circ)^\circ$. 

\begin{definition}
Assume that $A:=A_1=A_2$. 
An \emph{equivalence of double brackets} is a warped morphism of double brackets with $\psi=\id_{A}$ such that $\Psi$ is an automorphism of $A^{\otimes 2}$.  
\end{definition}

The different ways to compare double brackets are depicted in Figure \ref{fig:Equiv}.  

\begin{lemma} \label{Lem:Ind-Equiv}
Let $\dgal{-,-}_1$ be a double bracket associated with a swap-commuting $A$-bimodule structure $\cdot_1$ on $A^{\otimes 2}$. 
Let $\Psi$ be an arbitrary automorphism of $A^{\otimes 2}$ commuting with the swap automorphism ${}^\circ$ of $A^{\otimes 2}$. Then 
\begin{equation} \label{Eq:Lem-Equiv1a}
    \cdot_2: A \times A^{\otimes 2} \times A \to A^{\otimes 2}, \quad 
    a \cdot_2 d \cdot_2 b := \Psi(a\cdot_1 \Psi^{-1}(d) \cdot_1 b)\,,
\end{equation}
defines a swap-commuting $A$-bimodule structure on $A^{\otimes 2}$ while 
\begin{equation} \label{Eq:Lem-Equiv2}
    \dgal{-,-}_2 = \Psi \circ \dgal{-,-}_1 \,,
\end{equation}
is a double bracket associated with $\cdot_2$. 
Furthermore, $\Psi$ induces an equivalence of the double brackets  $\dgal{-,-}_1$ and $\dgal{-,-}_2$.
\end{lemma}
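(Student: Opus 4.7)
The plan is to verify the three claims of the lemma in order: (i) that $\cdot_2$ from \eqref{Eq:Lem-Equiv1a} defines a swap-commuting $A$-bimodule structure on $A^{\otimes 2}$; (ii) that $\dgal{-,-}_2$ from \eqref{Eq:Lem-Equiv2} is a double bracket with respect to $\cdot_2$; (iii) that $\Psi$ realises an equivalence. A technical point used throughout is that, since ${}^\circ$ is an involution and $\Psi$ is invertible, the hypothesis $\Psi \circ {}^\circ = {}^\circ \circ \Psi$ automatically yields $\Psi^{-1} \circ {}^\circ = {}^\circ \circ \Psi^{-1}$.

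For (i), I would first check the bimodule axioms for $\cdot_2$: $\kk$-bilinearity in all three arguments is immediate, and associativity of the left action, associativity of the right action, and compatibility of left and right actions all follow by inserting $\Psi^{-1}\Psi = \id$ between factors and using that $\cdot_1$ is already a bimodule and $\Psi$ is a $\kk$-linear automorphism of $A^{\otimes 2}$. The more substantive step is to identify the swap bimodule $\ast_2$ associated with $\cdot_2$. Using that $\Psi$ and $\Psi^{-1}$ both commute with ${}^\circ$, a direct computation gives
\[
a \ast_2 d \ast_2 b \;=\; (a \cdot_2 d^\circ \cdot_2 b)^\circ \;=\; \Psi\bigl(a \ast_1 \Psi^{-1}(d) \ast_1 b\bigr),
\]
where $\ast_1$ is the swap bimodule of $\cdot_1$. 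Swap-commuting of $\cdot_2$ then reduces, after cancelling a pair $\Psi^{-1}\Psi$ in both sides of \eqref{Eq:M-swapCom}, to swap-commuting of $\cdot_1$.

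For (ii), I would verify the three axioms of Definition \ref{Def:Dbr}. Cyclic antisymmetry of $\dgal{-,-}_2$ follows by applying $\Psi$ to \eqref{Eq:Dbr-cyc} for $\dgal{-,-}_1$ and using $\Psi \circ {}^\circ = {}^\circ \circ \Psi$. The right Leibniz rule is obtained by applying $\Psi$ to \eqref{Eq:Dbr-Der1} for $\dgal{-,-}_1$ and inserting $\Psi^{-1}\Psi$ in each summand to recognise the $\cdot_2$-action; explicitly,
\[
\dgal{a,bc}_2 \;=\; \Psi\bigl(b \cdot_1 \dgal{a,c}_1\bigr) + \Psi\bigl(\dgal{a,b}_1 \cdot_1 c\bigr) \;=\; b \cdot_2 \dgal{a,c}_2 + \dgal{a,b}_2 \cdot_2 c.
\]
As noted after Definition \ref{Def:Dbr}, the left Leibniz rule then follows automatically from antisymmetry, the right Leibniz rule, and the swap-commuting of $\cdot_2$ established in (i).

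Finally, for (iii) the conditions making $\Psi$ a warped morphism with $\psi = \id_A$ are tautological: the intertwining relation \eqref{Eq:wBimod}, which reads $\Psi(a \cdot_1 d \cdot_1 b) = a \cdot_2 \Psi(d) \cdot_2 b$, is precisely \eqref{Eq:Lem-Equiv1a} applied to $\Psi(d)$ in place of $d$, and $\Psi(\dgal{a,b}_1) = \dgal{a,b}_2$ is the definition \eqref{Eq:Lem-Equiv2}. Since $\Psi$ is by hypothesis an automorphism of $A^{\otimes 2}$, this is an equivalence of double brackets. The only place where something non-formal happens is the identification of $\ast_2$ in part (i): without the hypothesis that $\Psi$ commutes with ${}^\circ$, the swap bimodule of $\cdot_2$ would not take the clean conjugated form displayed above, and both the swap-commuting of $\cdot_2$ and the antisymmetry of $\dgal{-,-}_2$ would fail. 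This compatibility with ${}^\circ$ is therefore the key hypothesis that makes the whole construction go through.
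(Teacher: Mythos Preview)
Your proof is correct and follows essentially the same approach as the paper: both identify the swap bimodule $\ast_2$ via the conjugated formula $a \ast_2 d \ast_2 b = \Psi(a \ast_1 \Psi^{-1}(d) \ast_1 b)$ using that $\Psi^{-1}$ commutes with ${}^\circ$, reduce swap-commuting of $\cdot_2$ to that of $\cdot_1$, and then verify antisymmetry and the right Leibniz rule for $\dgal{-,-}_2$ directly. Your version is slightly more detailed (you explicitly mention the bimodule axioms and spell out why the equivalence condition holds), but the structure and key observations are the same.
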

\begin{proof}
Note that the assumption $\Psi(d^\circ)=(\Psi(d))^\circ$ for all $d\in A^{\otimes 2}$ also holds with $\Psi^{-1}$ in place of $\Psi$. This allows us to show that the swap $A$-bimodule $\ast_2$ associated with $\cdot_2$ \eqref{Eq:Lem-Equiv1a} satisfies 
\begin{equation} \label{Eq:Lem-Equiv1b}
\ast_2: A \times A^{\otimes 2} \times A \to A^{\otimes 2}, \quad 
    a \ast_2 d \ast_2 b := \Psi(a\ast_1 \Psi^{-1}(d) \ast_1 b)\,,     
\end{equation}
where $\ast_1$ is the swap $A$-bimodule of $\cdot_1$. 
In particular, we can check that \eqref{Eq:M-swapCom} for $\cdot_2,\ast_2$ is satisfied (i.e. $\cdot_2$ is swap-commuting) because $\cdot_1$ is swap-commuting. 

Since $\dgal{-,-}_1$ is cyclically antisymmetric and $\Psi$ commutes with ${}^\circ$, then $\dgal{-,-}_2$ satisfies \eqref{Eq:Dbr-cyc}. 
To check the right Leibniz rule \eqref{Eq:Dbr-Der1} for $\dgal{-,-}_2$ (with $\cdot_2$) using that of $\dgal{-,-}_1$ (with $\cdot_1$), we note  
\begin{align*}
    \dgal{a,bc}_2&= \Psi(b \cdot_1 \dgal{a,c}_1 + \dgal{a,b}_1 \cdot_1 c)
    = b \cdot_2 \Psi(\dgal{a,c}_1) + \Psi(\dgal{a,b}_1) \cdot_2 c \\
    &=b \cdot_2 \dgal{a,c}_2 + \dgal{a,b}_2 \cdot_2 c\,.
\end{align*}
In the same way, the left Leibniz rule \eqref{Eq:Dbr-Der2} is satisfied for $\dgal{-,-}_2$ (with $\ast_2$) because it holds for $\dgal{-,-}_1$ (with $\ast_1$). 
Hence, we have that $\dgal{-,-}_2$ is a double bracket. The fact that $\Psi$ defines an equivalence is direct from the definition of the second double bracket \eqref{Eq:Lem-Equiv2}.
\end{proof}

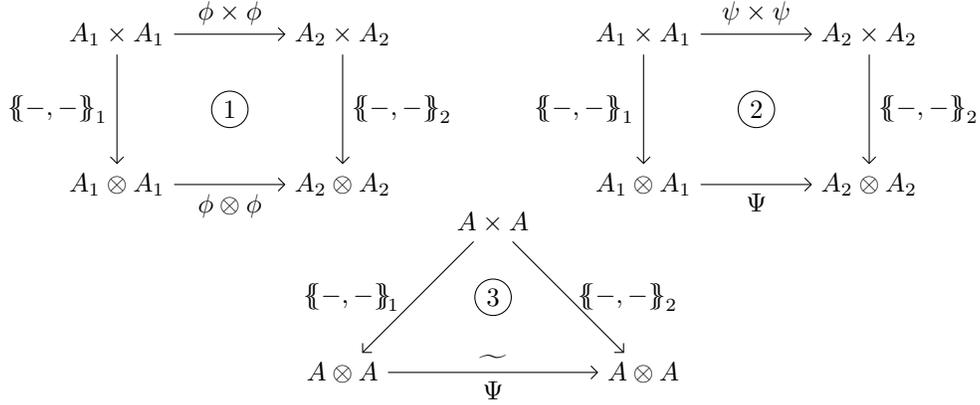
\begin{figure}
    \centering
\begin{tikzpicture}
 \node  (1TL) at (-8,1) {$A_1\times A_1$};
 \node  (1TR) at (-5,1) {$A_2\times A_2$};
  \node  (1BL) at (-8,-1) {$A_1\otimes A_1$};
 \node  (1BR) at (-5,-1) {$A_2\otimes A_2$};
 \node[circle,draw,inner sep=2pt] (1m) at (-6.5,0) {1};
\path[->,>=angle 90] (1TL) edge node[above] {$\phi\times \phi$}  (1TR) ;
\path[->,>=angle 90] (1TL) edge node[left] {$\dgal{-,-}_1$}  (1BL) ;
\path[->,>=angle 90] (1TR) edge node[right] {$\dgal{-,-}_2$}  (1BR) ;
\path[->,>=angle 90] (1BL) edge node[below] {$\phi\otimes \phi$}  (1BR) ;
 \node  (2TL) at (-1,1) {$A_1\times A_1$};
 \node  (2TR) at (2,1) {$A_2\times A_2$};
  \node  (2BL) at (-1,-1) {$A_1\otimes A_1$};
 \node  (2BR) at (2,-1) {$A_2\otimes A_2$};
 \node[circle,draw,inner sep=2pt] (2m) at (0.5,0) {2};
\path[->,>=angle 90] (2TL) edge node[above] {$\psi\times \psi$}  (2TR) ;
\path[->,>=angle 90] (2TL) edge node[left] {$\dgal{-,-}_1$}  (2BL) ;
\path[->,>=angle 90] (2TR) edge node[right] {$\dgal{-,-}_2$}  (2BR) ;
\path[->,>=angle 90] (2BL) edge node[below] {$\Psi$}  (2BR) ;
 \node  (3T) at (-3,-1.5) {$A\times A$};
  \node  (3BL) at (-5,-3.5) {$A\otimes A$};
 \node  (3BR) at (-1,-3.5) {$A\otimes A$};
 \node[circle,draw,inner sep=2pt] (3m) at (-3,-2.5) {3};
\path[->,>=angle 90] (3T) edge node[left] {$\dgal{-,-}_{\!1}\,\,$}  (3BL) ;
\path[->,>=angle 90] (3T) edge node[right] {$\dgal{-,-}_2$}  (3BR) ;
\path[->,>=angle 90] (3BL) edge node[below] {$\Psi$} (3BR);
\node (til) at  (-3,-3.4)  {$\widetilde{\hspace{0.5cm}}$}; 
\end{tikzpicture}
\caption{Commutative diagrams representing the definitions of: 1. Morphism of double brackets; 2. Warped morphism of double brackets; 3. Equivalence of double brackets.}
    \label{fig:Equiv}
\end{figure}

Finally, an equivalence of double brackets may or may not preserve double Poisson brackets. Two opposite behaviours are given in Example \ref{Ex:ctr-Equiv} and Proposition \ref{Pr:Op-Equiv}.

\begin{example} \label{Ex:ctr-Equiv}
Let $A$ be an arbitrary algebra, and assume that $\dgal{-,-}_1$ is a double bracket associated with $\cdot_1 = \cdot_{out}$ the outer bimodule structure \eqref{Eq:Mod-out} on $A^{\otimes 2}$. Fix an automorphism $\alpha$ of $A$. 
Using Lemma \ref{Lem:Ind-Equiv} with $\Psi=\alpha\otimes\alpha$, 
the map $ \dgal{-,-}_2 = (\alpha \otimes \alpha) \circ \dgal{-,-}_1$ 
is a double bracket associated with the $\alpha$-twisted outer bimodule structure $\cdot_2=\cdot_{out}^{\alpha}$ on $A^{\otimes 2}$, which satisfies 
$a\cdot_2 d \cdot_2 b = \alpha(a) d' \otimes d'' \alpha(b)$. 
In particular, $\alpha \otimes \alpha$ induces an equivalence of double brackets.

As a special case, consider $A= \mathbb{C}\langle x,y \rangle$ with 
\begin{equation}
\dgal{x,x}_1= x \otimes 1 - 1 \otimes x, \quad 
\dgal{y,y}_1= y \otimes 1 - 1 \otimes y, \quad 
\dgal{x,y}_1=0.
\end{equation}
(This completely determines a double bracket $\dgal{-,-}_1$ associated with the outer bimodule structure by Lemma \ref{Lem:Gen}.) 
From the work of Van den Bergh \cite{VdB1}, we have that $(A,\dgal{-,-}_1)$ is Poisson. Take $\alpha$ to be the automorphism $\alpha: x \mapsto y, y \mapsto x$. This yields an equivalence of double brackets between $\dgal{-,-}_1$ and $\dgal{-,-}_2$ (associated with the $\alpha$-twisted outer bimodule structure) defined by 
\begin{equation}
\dgal{x,x}_2= y \otimes 1 - 1 \otimes y, \quad 
\dgal{y,y}_2= x \otimes 1 - 1 \otimes x, \quad 
\dgal{x,y}_2=0.
\end{equation} 
This equivalent double bracket is not Poisson. 
Indeed, we claim that $\dgal{x,y,y}_2 \neq 0$, which implies that the double Jacobiator $\dgal{-,-,-}_2$ is nonzero. 
To see this, note that  
$\dgal{x,y,y}_2 = \dgal{x,\dgal{y,y}_2}_{2,L}$ by \eqref{Eq:dJac} because  $\dgal{x,y}_2=0$. We compute the above term,
\begin{align*}
    \dgal{x,\dgal{y,y}_2}_{2,L} 
&= \dgal{x, x\otimes 1}_{2,L} - \dgal{x,1 \otimes x}_{2,L} 
= y\otimes 1 \otimes 1 - 1 \otimes y \otimes 1 \,,
\end{align*}
which is nonzero and proves our claim. 
\end{example}

\begin{proposition} \label{Pr:Op-Equiv}
Let $\dgal{-,-}_1$ be a double Poisson bracket associated with a swap-commuting $A$-bimodule structure $\cdot_1$ on $A^{\otimes 2}$. 
Then $\dgal{-,-}_2 = \dgal{-,-}_1^{\circ}$ 
is a double Poisson bracket associated with the swap $A$-bimodule $\cdot_2=\ast_1$ of $\cdot_1$ on $A^{\otimes 2}$.  
Furthermore, the swap automorphism ${}^\circ$ induces an equivalence of the double brackets  $\dgal{-,-}_1$ and $\dgal{-,-}_2$.
\end{proposition}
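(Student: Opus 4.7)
The plan is to apply Lemma~\ref{Lem:Ind-Equiv} with $\Psi = {}^\circ$ to obtain the double-bracket and equivalence parts of the statement, and then to prove the Poisson claim by deriving a single clean identity between the two double Jacobiators.

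First, the swap ${}^\circ : A^{\otimes 2} \to A^{\otimes 2}$ is an algebra automorphism, since
\begin{equation*}
((a_1\otimes b_1)(a_2\otimes b_2))^\circ = b_1 b_2 \otimes a_1 a_2 = (a_1\otimes b_1)^\circ (a_2 \otimes b_2)^\circ,
\end{equation*}
and it is involutive, so it trivially commutes with itself. Lemma~\ref{Lem:Ind-Equiv} therefore applies and yields a double bracket $\dgal{-,-}_2 := {}^\circ \circ \dgal{-,-}_1$ associated with the bimodule structure $a\cdot_2 d\cdot_2 b = (a\cdot_1 d^\circ \cdot_1 b)^\circ$; this is precisely the swap bimodule $\ast_1$ by Definition~\ref{Def:Sw-Bim}, so $\cdot_2 = \ast_1$. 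The equivalence assertion is given directly by the same lemma.

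The remaining point is to verify that $\dgal{-,-}_2$ is Poisson whenever $\dgal{-,-}_1$ is. I would prove the identity
\begin{equation*}
\dgal{a,b,c}_2 = -\tau_{(12)} \dgal{a,c,b}_1, \qquad a,b,c \in A,
\end{equation*}
from which the conclusion is immediate, since $\dgal{-,-,-}_1 \equiv 0$ then forces $\dgal{-,-,-}_2 \equiv 0$. Two elementary inputs go into this identity. On the one hand, the cyclic antisymmetry \eqref{Eq:Dbr-cyc} applied to $\dgal{-,-}_1$ gives $\dgal{b,c}_2 = \dgal{b,c}_1^\circ = -\dgal{c,b}_1$. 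On the other hand, directly unravelling the map defined in \eqref{Eq:dbr-L} yields
\begin{equation*}
\dgal{a, X}_{2,L} = \tau_{(12)} \dgal{a, X}_{1,L}, \qquad X \in A^{\otimes 2},
\end{equation*}
since the swap acting on $\dgal{a, X'}_1 \in A^{\otimes 2}$ becomes the transposition of the first two factors in $A^{\otimes 3}$ after tensoring with $X''$. Substituting these two observations into the three summands of $\dgal{a,b,c}_2$ written via \eqref{Eq:dJac}, and collecting the resulting prefactors via the relations $(123)(12) = (13)$, $(132)(12) = (23)$, $(12)(123) = (23)$, $(12)(132) = (13)$, one matches termwise with the three summands of $-\tau_{(12)} \dgal{a,c,b}_1$.

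The main obstacle is purely bookkeeping of the six permutations that appear; no subtle conceptual issue arises, and the computation fits on a few lines once the two input identities above are in place.
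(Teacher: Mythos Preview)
Your proof is correct and follows essentially the same route as the paper's own argument: both apply Lemma~\ref{Lem:Ind-Equiv} with $\Psi={}^\circ$ to obtain the double-bracket and equivalence statements, and both establish the key identity $\dgal{a,b,c}_2=-\tau_{(12)}\dgal{a,c,b}_1$ by combining $\dgal{b,c}_2=-\dgal{c,b}_1$ with $\dgal{a,X}_{2,L}=\tau_{(12)}\dgal{a,X}_{1,L}$ and then matching the three summands via the permutation relations you list. The only cosmetic difference is that the paper factors $\tau_{(12)}$ out in one step rather than tracking the six permutations separately.
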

\begin{proof}
Taking $\Psi={}^\circ$ in Lemma \ref{Lem:Ind-Equiv}, we have all the claims except the fact that $\dgal{-,-}_2$ is Poisson. For $a,b,c\in A$, we compute using the cyclic antisymmetry of $\dgal{-,-}_1$ that 
\begin{equation*}
    \dgal{a,\dgal{b,c}_2}_{2,L}
=- \dgal{a,\dgal{c,b}_1}_{2,L}
=-\tau_{(12)} \dgal{a,\dgal{c,b}_1}_{1,L}\,.
\end{equation*}
Thus, the double Jacobiator \eqref{Eq:dJac} becomes 
\begin{equation}
    \begin{aligned}
\dgal{a,b,c}_2=&
-\tau_{(12)}\dgal{a, \dgal{c,b}_1}_{1,L}
-\tau_{(123)}\tau_{(12)} \dgal{b, \dgal{a,c}_1}_{1,L}
-\tau_{(132)}\tau_{(12)} \dgal{c, \dgal{b,a}_1}_{1,L}\\
=&-\tau_{(12)}\left[ 
\dgal{a, \dgal{c,b}_1}_{1,L}
+\tau_{(123)} \dgal{c, \dgal{b,a}_1}_{1,L}
+\tau_{(132)} \dgal{b, \dgal{a,c}_1}_{1,L}
\right] \\
=&-\tau_{(12)} \dgal{a,c,b}_1\,. \label{Eq:Op-Equiv}    
    \end{aligned}
\end{equation}
Since $\dgal{-,-}_1$ is Poisson, we have $\dgal{-,-,-}_1=0$ which implies that  $\dgal{-,-,-}_2=0$ and thus $\dgal{-,-}_2$ is a double Poisson bracket. 
\end{proof}

We can adapt Proposition \ref{Pr:Op-Equiv} for double weak Poisson brackets. The following result concerns the different cases that are not obtained by permutation from one another, see the end of \ref{ss:wDBR}.

\begin{proposition} \label{Pr:Op-Equ-wk1}
Let $\dgal{-,-}_1$ be a double bracket associated with a swap-commuting $A$-bimodule structure $\cdot_1$ on $A^{\otimes 2}$. 
Let $\dgal{-,-}_2 = \dgal{-,-}_1^{\circ}$ be the equivalent (under the swap automorphism ${}^\circ$) double bracket associated with the swap $A$-bimodule $\cdot_2=\ast_1$ of $\cdot_1$ on $A^{\otimes 2}$.  
Then 

(i) $\dgal{-,-}_1$ is $(12)$-weak Poisson if and only if 
$\dgal{-,-}_2$ is $[(12),(13)]$-weak Poisson.

(ii) $\dgal{-,-}_1$ is $[(12),(23)]$-weak Poisson if and only if 
$\dgal{-,-}_2$ is $[(12),(23)]$-weak Poisson.
\end{proposition}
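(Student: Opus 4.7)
The plan is to leverage the key identity derived during the proof of Proposition \ref{Pr:Op-Equiv}, namely \eqref{Eq:Op-Equiv}, which I rewrite globally as
$$\dgal{-,-,-}_2 \;=\; -\tau_{(12)}\circ\dgal{-,-,-}_1\circ\tau_{(23)}\,.$$
For each of the two cases, I will substitute this formula into the definition \eqref{Eq:wdJac2} of the weak double Jacobiator of $\dgal{-,-}_2$, using that $\tau_\sigma^{-1}=\tau_{\sigma^{-1}}$ and that the transpositions involved are involutions. The goal is to rewrite each $[\sigma,\sigma']\wk\!\dgal{-,-,-}_2$ as a conjugation of a weak double Jacobiator of $\dgal{-,-}_1$ by invertible $\tau$-operators, from which the stated equivalences will follow.

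For part (ii), I plan to expand $[(12),(23)]\wk\!\dgal{-,-,-}_2 = \dgal{-,-,-}_2 - \tau_{(12)}\circ\dgal{-,-,-}_2\circ\tau_{(23)}$. The first summand is just $-\tau_{(12)}\dgal{-,-,-}_1\tau_{(23)}$, while in the second summand the two outer $\tau_{(12)}$ cancel and the two $\tau_{(23)}$ cancel, so $\tau_{(12)}\dgal{-,-,-}_2\tau_{(23)}=-\dgal{-,-,-}_1$. Summing, I expect to obtain the identity $[(12),(23)]\wk\!\dgal{-,-,-}_2 = [(12),(23)]\wk\!\dgal{-,-,-}_1$, making the equivalence immediate.

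For part (i), the same substitution into $[(12),(13)]\wk\!\dgal{-,-,-}_2$, combined with $\tau_{(23)}\tau_{(13)}=\tau_{(123)}$, will yield
$$[(12),(13)]\wk\!\dgal{-,-,-}_2 \;=\; -\tau_{(12)}\circ\dgal{-,-,-}_1\circ\tau_{(23)} \,+\, \dgal{-,-,-}_1\circ\tau_{(123)}\,.$$
To match the right-hand side to a weak double Jacobiator of $\dgal{-,-}_1$, I will invoke the cyclic symmetry \eqref{Eq:dJCyc}, which makes $\tau_{(123)}$ commute with $\dgal{-,-,-}_1$, and then use the factorisation $\tau_{(123)}=\tau_{(12)}\tau_{(23)}$. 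This allows $-\tau_{(12)}$ to be pulled out on the left and $\tau_{(23)}$ on the right, producing $-\tau_{(12)}\circ(23)\wk\!\dgal{-,-,-}_1\circ\tau_{(23)}$. Hence $\dgal{-,-}_2$ is $[(12),(13)]$-weak Poisson if and only if $\dgal{-,-}_1$ is $(23)$-weak Poisson, which in turn is the same as $(12)$-weak Poisson by the equivalences discussed after \eqref{Eq:wdJCyc}.

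The main obstacle I anticipate is keeping the permutation bookkeeping straight: the argument hinges on correctly identifying compositions in $S_3$ such as $\tau_{(23)}\tau_{(13)}=\tau_{(123)}$ and $\tau_{(12)}\tau_{(23)}=\tau_{(123)}$, and on carefully applying the cyclic-symmetry relation $\dgal{-,-,-}_1\circ\tau_{(123)}=\tau_{(123)}\circ\dgal{-,-,-}_1$ at the right moment. Once the permutation algebra is set up, both claims reduce to short manipulations of the identity for $\dgal{-,-,-}_2$.
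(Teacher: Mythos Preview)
Your proposal is correct and follows essentially the same approach as the paper: both start from the relation $\dgal{-,-,-}_2=-\tau_{(12)}\circ\dgal{-,-,-}_1\circ\tau_{(23)}$ obtained in \eqref{Eq:Op-Equiv}, substitute into the definition of the relevant weak double Jacobiator, and reduce via permutation identities in $S_3$. The only minor difference is in part (i): the paper factorises $\dgal{-,-,-}_1\circ\tau_{(123)}$ using $\tau_{(123)}=\tau_{(12)}\tau_{(23)}$ \emph{on the right of} $\dgal{-,-,-}_1$ to land directly on $-\tau_{(12)}\circ(12)\wk\!\dgal{-,-,-}_1\circ\tau_{(23)}$, whereas you first invoke cyclic symmetry to move $\tau_{(123)}$ to the left and obtain $-\tau_{(12)}\circ(23)\wk\!\dgal{-,-,-}_1\circ\tau_{(23)}$, then use the equivalence between $(23)$-weak and $(12)$-weak Poisson; both routes are valid and the paper's avoids that last appeal.
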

\begin{proof}
Using \eqref{Eq:Op-Equiv}, the double Jacobiators are related by 
$\dgal{-,-,-}_2=-\tau_{(12)}\circ \dgal{-,-,-}_1\circ \tau_{(23)}$. 

(i) For $\sigma=(12)$ and $\sigma'=(13)$, we can write that 
\begin{align*}
[\sigma,\sigma']\wk\!\dgal{-,-,-}_2
=&\dgal{-,-,-}_2-\tau_{(12)}\circ \dgal{-,-,-}_2\circ \tau_{(13)} \\
=&-\tau_{(12)}\circ \dgal{-,-,-}_1\circ \tau_{(23)}
+ \dgal{-,-,-}_1\circ \tau_{(123)}\\
=&-\tau_{(12)}\circ \left(\dgal{-,-,-}_1
-\tau_{(12)}\circ \dgal{-,-,-}_1\circ \tau_{(12)}\right) \circ \tau_{(23)}\\
=&-\tau_{(12)}\circ \sigma\wk\!\dgal{-,-,-}_1 \circ \tau_{(23)}\,.
\end{align*}
So the $\sigma$-weak Jacobiator of $\dgal{-,-}_1$ vanishes if and only if the $[\sigma,\sigma']$-weak Jacobiator of $\dgal{-,-}_2$ vanishes. 

(ii) For $\sigma=(12)$ and $\sigma'=(23)$, we have 
\begin{align*}
[\sigma,\sigma']\wk\!\dgal{-,-,-}_2
=&\dgal{-,-,-}_2-\tau_{(12)}\circ \dgal{-,-,-}_2\circ \tau_{(23)} \\
=&-\tau_{(12)}\circ \dgal{-,-,-}_1\circ \tau_{(23)}
+\dgal{-,-,-}_1
=[\sigma,\sigma']\wk\!\dgal{-,-,-}_1\,.
\end{align*}
We can directly conclude the result. 
\end{proof}

\subsection{Double brackets over a general base} \label{ss:base}

Let $B$ be a unital subalgebra of $A$. Given a double bracket $\dgal{-,-}$ on $A$, it is sometimes convenient to require the following \emph{$B$-linearity} condition: for any $b\in B$, $\dgal{b,-}:A\to A^{\otimes 2}$ (or equivalently   $\dgal{-,b}:A\to A^{\otimes 2}$) is identically zero. 
As was originally observed by Van den Bergh \cite{VdB1}, it is particularly interesting to consider $B$-linear double brackets when $A$ admits a complete set of orthogonal idempotents $(e_s)_{s\in I}$ and $B=\oplus_{s\in I} \kk e_s$. 
All the previous definitions and results can be adapted to the $B$-linear case in a straightforward way.


\section{Specific properties and examples ...} \label{S:Exmp}

We now describe interesting families of double brackets associated with a swap-commuting bimodule structure. To compare these families with Van den Bergh's original definition of double bracket \cite{VdB1}, we derive some of the properties that they share. 

\subsection{... from the outer bimodule structure} \label{ss:E-Out}

The case of double brackets associated with the outer bimodule structure \eqref{Eq:Mod-out} is due to Van den Bergh \cite{VdB1}. 
A more general consideration consists in endowing $A^{\otimes 2}$ with the $(\alpha,\beta)$-twisted outer bimodule structure (obtained by combining \eqref{Eq:Mod-Twis} and \eqref{Eq:Mod-out}). A double bracket on $A$ associated with this bimodule structure satisfies the right Leibniz rule 
\begin{equation} \label{Eq:DbrOut-Der1}
    \dgal{a,bc}=\alpha(b) \dgal{a,c}'\otimes \dgal{a,c}'' 
+ \dgal{a,b}' \otimes \dgal{a,b}'' \beta(c)\,, 
\end{equation}
and, using e.g. the cyclic antisymmetry \eqref{Eq:Dbr-cyc}, we have the left Leibniz rule 
\begin{equation} \label{Eq:DbrOut-Der2}
    \dgal{ac,b}=   \dgal{c,b}'\otimes \alpha(a) \dgal{c,b}'' 
+ \dgal{a,b}' \beta(c)\otimes \dgal{a,b}'' \,. 
\end{equation}
Thus, the left Leibniz rule is a derivation for the $(\alpha,\beta)$-twisted \emph{inner} bimodule structure. 

Next, we want a criterion to determine when a double bracket is Poisson. (This is trivially the case for the zero double bracket $\dgal{-,-}$.) 
Recall from Lemma \ref{Lem:Jac} that it suffices to verify the vanishing of the double Jacobiator on a set of generators, provided that it is a derivation in its third argument. 
By mimicking the proof of Proposition 2.3.1 in \cite{VdB1}, we get that the double Jacobiator \eqref{Eq:dJac} satisfies for any $a,b,c_1,c_2\in A$
\begin{equation}
\begin{aligned} \label{Eq:Out-dJac-all}
\dgal{a,b,c_1c_2}=&
\,\,(\alpha^2(c_1)\otimes 1 \otimes 1) \dgal{a,\dgal{b,c_2}}_L
+ \dgal{a,\dgal{b,c_1}}_L ( 1 \otimes 1\otimes \beta(c_2)) \\
&+(\alpha(c_1)\otimes 1 \otimes 1)\, \tau_{(123)} \dgal{b,\dgal{c_2,a}}_L
+\big(\tau_{(123)} \dgal{b,\dgal{c_1,a}}_L\big)\, (1 \otimes 1 \otimes \beta^2(c_2))\\
&+(\alpha(c_1)\otimes 1 \otimes 1)\, \tau_{(132)} \dgal{c_2,\dgal{a,b}}_L
+\big(\tau_{(132)} \dgal{c_1,\dgal{a,b}}_L\big)\, ( 1 \otimes 1 \otimes \beta(c_2))\\
&+\dgal{a,\alpha(c_1)}'\otimes \dgal{a,\alpha(c_1)}''  \beta(\dgal{b,c_2}') \otimes \dgal{b,c_2}'' \\
&+ \dgal{c_1,a}''\otimes \alpha(\dgal{c_1,a}') \dgal{b,\beta(c_2)}' \otimes \dgal{b,\beta(c_2)}''\,.
\end{aligned}
\end{equation}

\begin{lemma}[\cite{VdB1}] \label{Lem:OutJac}
In addition to the trivial case when $\dgal{-,-}=0$, 
the double Jacobiator \eqref{Eq:dJac} of a double bracket $\dgal{-,-}$ associated with the $(\alpha,\beta)$-twisted outer bimodule structure is a derivation in each of its entries when  $\alpha=\beta=\id_A$. 
In that case the third entry acts as the following derivation $A\to A^{\otimes 3}$: 
$\dgal{a,b,c_1c_2}=(c_1\otimes 1 \otimes 1) \dgal{a,b,c_2} 
+ \dgal{a,b,c_1} (1\otimes 1 \otimes c_2)$, 
for all $a,b,c_1,c_2\in A$.  
\end{lemma}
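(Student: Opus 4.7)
The proof rests on the master expansion \eqref{Eq:Out-dJac-all}, which collects everything one obtains by expanding each inner double bracket in $\dgal{a, \dgal{b,c_1c_2}}_L$, $\tau_{(123)}\dgal{b,\dgal{c_1c_2,a}}_L$ and $\tau_{(132)}\dgal{c_1c_2,\dgal{a,b}}_L$ via the twisted outer Leibniz rules \eqref{Eq:DbrOut-Der1}--\eqref{Eq:DbrOut-Der2}. I would first specialise \eqref{Eq:Out-dJac-all} to $\alpha = \beta = \id_A$. The first six terms then regroup, via the definition \eqref{Eq:dJac} of the double Jacobiator, as $(c_1 \otimes 1 \otimes 1)\dgal{a,b,c_2} + \dgal{a,b,c_1}(1 \otimes 1 \otimes c_2)$, which is exactly the derivation rule stated in the lemma.

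Next I would show that the two remaining ``anomalous'' terms cancel. Under $\alpha = \beta = \id_A$ they read
\[
\dgal{a,c_1}'\otimes \dgal{a,c_1}''\dgal{b,c_2}'\otimes \dgal{b,c_2}'' \;+\; \dgal{c_1,a}''\otimes \dgal{c_1,a}'\dgal{b,c_2}'\otimes \dgal{b,c_2}''\,.
\]
Cyclic antisymmetry \eqref{Eq:Dbr-cyc} says $\dgal{c_1,a} = -\tau_{(12)}\dgal{a,c_1}$, so applying $\tau_{(12)}$ gives $\dgal{c_1,a}''\otimes \dgal{c_1,a}' = -\dgal{a,c_1}'\otimes \dgal{a,c_1}''$, producing a perfect cancellation of the two terms. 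This establishes the derivation rule in the third entry, with $A$-bimodule action on $A^{\otimes 3}$ being the outer one supported on the first and third tensor slots. The cyclic symmetry \eqref{Eq:dJCyc} of the double Jacobiator then transfers the derivation property to the first two entries with the appropriately rearranged bimodule structures on $A^{\otimes 3}$, as announced.

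For the converse implication (the ``in addition to the trivial case'' wording), observe that in \eqref{Eq:Out-dJac-all} the element $c_1$ appears in the left-action slot dressed by $\alpha^2$ once and by $\alpha$ twice, plus internally as $\dgal{a,\alpha(c_1)}$; symmetrically, $c_2$ is dressed by $\beta$ twice, $\beta^2$ once, plus internally as $\dgal{b,\beta(c_2)}$. A single derivation rule in the third entry requires that all left-slot twists collapse to one fixed power of $\alpha$ and all right-slot twists to one fixed power of $\beta$. Assuming $\dgal{-,-}\neq 0$, one picks $a,b$ with $\dgal{a,b}\neq 0$ and varies $c_1,c_2$ (or substitutes generators) so that the coefficients of the mismatched powers are forced to agree, yielding $\alpha^2 = \alpha$ and $\beta^2 = \beta$, hence $\alpha=\beta=\id_A$ since both are automorphisms. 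The main obstacle is precisely this converse bookkeeping: unlike the clean cancellation in the case $\alpha=\beta=\id_A$, one must argue, in the presence of arbitrary twists and of the two anomalous terms, that there is no conspiracy of signs and bracket values that could rescue the derivation property nontrivially.
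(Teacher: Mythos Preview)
Your first two paragraphs are correct and match the paper's proof essentially verbatim: specialise \eqref{Eq:Out-dJac-all} to $\alpha=\beta=\id_A$, observe that the last two lines cancel by cyclic antisymmetry \eqref{Eq:Dbr-cyc}, and then propagate the derivation property to the other two arguments via the cyclic symmetry of the Jacobiator (the paper cites Lemma~\ref{Lem:Jac}, whose proof is exactly the argument you give).

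Your third paragraph, however, rests on a misreading of the statement. The phrase ``in addition to the trivial case when $\dgal{-,-}=0$'' is not announcing a biconditional; it is simply listing a second sufficient condition alongside $\alpha=\beta=\id_A$. The lemma makes no claim that these two cases are exhaustive, and indeed the paper explicitly remarks immediately afterwards that they are \emph{not}: the example following the lemma exhibits a nonzero double bracket with $\alpha\neq\id_A$ (in fact $\alpha^2=\id_A$) whose double Jacobiator vanishes identically, hence is trivially a derivation. This directly falsifies the converse you sketch, and in particular the step ``$\alpha^2=\alpha$, hence $\alpha=\id_A$'' cannot be extracted from the hypothesis in general. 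The ``obstacle'' you identify is not a bookkeeping difficulty but the fact that the statement you are trying to prove is false. Simply delete the third paragraph.
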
 
\begin{proof}
The case $\alpha=\beta=\id_A$ for the third entry holds because the last two terms in \eqref{Eq:Out-dJac-all} cancel out by \eqref{Eq:Dbr-cyc}. 
We get derivations in the other two entries by Lemma \ref{Lem:Jac}. 
\end{proof}

Note that Lemma \ref{Lem:OutJac} does not exhaust all possible cases where $\dgal{-,-,-}$ is a derivation in its arguments, as the following example shows.

\begin{example}
Take $A=\kk\langle x,y \rangle/(x^2,y^2,xy,yx)$. We can extend 
\begin{equation}
    \dgal{x,x}=x\otimes y - y \otimes x\,, \quad \dgal{x,y}=\dgal{y,x}=0\,,
\end{equation}
to a double bracket associated with the $\alpha$-twisted outer bimodule, for the automorphism $\alpha(x)= y$, $\alpha(y)= x$. Indeed, it suffices to check that $\dgal{-,-}$ vanishes on any product of generators $x,y$ after applying Leibniz rules \eqref{Eq:DbrOut-Der1} and \eqref{Eq:DbrOut-Der2} (with $\beta=\alpha$). 
Next, we note that the double Jacobiator of this double bracket vanishes on any triple of generators $x,y$, see e.g. \cite[\S2.6]{DSKV} (this does not require any Leibniz rule). Since any product of at least two generators is zero in $A$, the double bracket is Poisson.  
\end{example}

Given an arbitrary double bracket $\dgal{-,-}$ associated with the $(\alpha,\beta)$-twisted outer bimodule structure, we can consider the $\kk$-bilinear operation  
\begin{equation} \label{Eq:Out-brm}
    \br{-,-}_\mult = \mult \circ \dgal{-,-} : A\times A\to A\,, \quad 
    \br{a,b}_\mult = \dgal{a,b}' \dgal{a,b}''\,,
\end{equation}
where $\mult$ is the multiplication on $A$ seen as a map $A\otimes A \to A$. We get the next result from \eqref{Eq:DbrOut-Der1}.

\begin{lemma} \label{Lem:Out-Ind-der}
The map $\br{-,-}_\mult$ is an $(\alpha,\beta)$-twisted derivation in its second argument. That is, for any $a,b,c\in A$, 
$\br{a,bc}_\mult=\alpha(b) \br{a,c}_\mult + \br{a,b}_\mult \beta(c)$. 
\end{lemma}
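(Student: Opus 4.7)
The proof is essentially a one-line calculation, so the plan is to make explicit what is being composed and check that the multiplication map intertwines things as expected.

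First I would recall the definition $\br{a,bc}_\mult = \mult(\dgal{a,bc})$, where $\mult:A\otimes A\to A$ is the algebra multiplication sending $a_1\otimes a_2$ to $a_1 a_2$. The idea is simply to substitute the right Leibniz rule \eqref{Eq:DbrOut-Der1} for $\dgal{a,bc}$ inside this definition, and then apply $\mult$ term by term to the resulting sum of elementary tensors.

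Concretely, starting from
\begin{equation*}
\dgal{a,bc}= \alpha(b) \dgal{a,c}'\otimes \dgal{a,c}''
+ \dgal{a,b}' \otimes \dgal{a,b}'' \beta(c)\,,
\end{equation*}
the key observation is that the left factor of the first tensor is already of the form $\alpha(b) \cdot (\text{stuff})$ in $A$, and the right factor of the second tensor is $(\text{stuff}) \cdot \beta(c)$. Applying $\mult$ collapses each tensor into a product in $A$, and the scalars $\alpha(b)$ and $\beta(c)$ simply factor out on the appropriate side because they sit at the extremities. This yields
\begin{equation*}
\br{a,bc}_\mult = \alpha(b)\,\dgal{a,c}'\dgal{a,c}'' + \dgal{a,b}'\dgal{a,b}''\,\beta(c) = \alpha(b)\br{a,c}_\mult + \br{a,b}_\mult \beta(c)\,,
\end{equation*}
which is the desired $(\alpha,\beta)$-twisted derivation identity.

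There is no real obstacle here: the statement is literally the image under $\mult$ of the right Leibniz rule, and no use of the cyclic antisymmetry or of the swap-commuting property is needed. The only point to mention explicitly, for the benefit of a reader not used to Sweedler notation, is that the $\kk$-bilinearity of $\mult$ justifies distributing it over the implicit sum hidden in $\dgal{a,c}'\otimes \dgal{a,c}''$.
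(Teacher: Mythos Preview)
Your proof is correct and matches the paper's approach exactly: the paper simply states that the result follows from \eqref{Eq:DbrOut-Der1}, and your argument is the explicit unpacking of that one-line remark. There is nothing to add.
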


\begin{lemma} \label{Lem:Out-Ind-2}
If $\alpha=\beta$, then $\br{-,-}_\mult: A\times A\to A$ descends to a map $\br{-,-}_\mult:A/[A,A]\times A\to A$, which is an $\alpha$-twisted derivation in its second argument. 
Furthermore, this descends to an antisymmetric map 
$\br{-,-}_\mult:A/[A,A]\times A/[A,A]\to A/[A,A]$.
\end{lemma}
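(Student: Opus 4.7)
The plan is to handle the three assertions in sequence: first the descent in the first argument, then the inheritance of the twisted derivation property, and finally the joint descent in both arguments together with antisymmetry.

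For the first descent, I would apply the multiplication $\mult$ to the left Leibniz rule \eqref{Eq:DbrOut-Der2}, obtaining
\[
\br{ac,b}_\mult = \dgal{c,b}'\, \alpha(a)\, \dgal{c,b}'' + \dgal{a,b}'\, \beta(c)\, \dgal{a,b}''.
\]
Exchanging the roles of $a$ and $c$ and subtracting, the difference $\br{ac,b}_\mult - \br{ca,b}_\mult$ reduces to $\dgal{c,b}'(\alpha(a)-\beta(a))\dgal{c,b}'' + \dgal{a,b}'(\beta(c)-\alpha(c))\dgal{a,b}''$. With $\alpha = \beta$, both terms vanish identically, so $\br{-,-}_\mult$ kills $[A,A] \times A$ and descends to a map $A/[A,A] \times A \to A$. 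Since the descent is only on the first slot, the $\alpha$-twisted derivation property in the second argument, which is just Lemma \ref{Lem:Out-Ind-der} specialised to $\beta = \alpha$, passes to the descent without further work.

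For the final descent in the second argument, I would exploit this $\alpha$-twisted derivation rule to compute
\[
\br{a,b_1 b_2}_\mult - \br{a,b_2 b_1}_\mult = [\alpha(b_1), \br{a,b_2}_\mult] + [\br{a,b_1}_\mult, \alpha(b_2)],
\]
which lies in $[A,A]$. Composing the output with the projection $A \to A/[A,A]$ thus yields a well-defined map $A/[A,A] \times A/[A,A] \to A/[A,A]$. Antisymmetry of the descended map follows from the cyclic antisymmetry \eqref{Eq:Dbr-cyc}: from $\dgal{a,b} = -\dgal{b,a}^\circ$ and application of $\mult$ one gets $\br{a,b}_\mult = -\dgal{b,a}''\dgal{b,a}'$, so that $\br{a,b}_\mult + \br{b,a}_\mult = [\dgal{b,a}',\dgal{b,a}''] \in [A,A]$.

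No step here is really a serious obstacle: the crucial point is the first computation, where the hypothesis $\alpha = \beta$ is used in an essential way to cancel the two otherwise-problematic terms and push the descent through in the first argument. Every subsequent verification is a routine manipulation modulo $[A,A]$ that amounts to recognising commutators.
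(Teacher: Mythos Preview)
Your proof is correct and follows essentially the same approach as the paper's own proof: both apply $\mult$ to the left Leibniz rule \eqref{Eq:DbrOut-Der2} to see that $\br{ab-ba,c}_\mult$ vanishes when $\alpha=\beta$, then use the $\alpha$-twisted derivation in the second argument to show $\br{a,bc-cb}_\mult\in[A,A]$, and finally invoke cyclic antisymmetry for the antisymmetry modulo commutators. The only cosmetic difference is that you keep $\alpha,\beta$ distinct until the subtraction step, making the role of the hypothesis $\alpha=\beta$ slightly more visible, whereas the paper substitutes it from the outset.
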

\begin{proof}
For the first part, note that the identity 
\begin{equation*}
    \br{ab,c}_\mult=\dgal{a,c}'\alpha(b)\dgal{a,c}''
    + \dgal{b,c}' \alpha(a) \dgal{b,c}''\,,
\end{equation*}
implies that $\br{ab-ba,c}_\mult=0$ for any $a,b,c\in A$. 
In particular, the derivation property follows from Lemma \ref{Lem:Out-Ind-der}. 
For the second part, we compute for $\br{-,-}_\mult:A/[A,A]\times A\to A$ that  
\begin{equation*}
\br{a,bc-cb}_\mult 
=[\alpha(b),\br{a,c}_\mult] + [\br{b,c}_\mult , \alpha(a)]\,,
\end{equation*}
so that we have a well-defined map $A/[A,A]\times A/[A,A]\to A/[A,A]$. Its antisymmetry follows from \eqref{Eq:Dbr-cyc}: 
$\br{a,b}_\mult=-\dgal{b,a}'' \dgal{b,a}'$, which is $-\br{b,a}_\mult$ modulo commutators. 
\end{proof}

In the case $\alpha=\beta=\id_A$, both lemmas can be found in \cite[\S2.4]{VdB1}. Furthermore, it is proven that in the presence of a double Poisson bracket, the map $\br{-,-}_\mult$ \eqref{Eq:Out-brm} is a left Loday bracket\footnote{A left Loday bracket is a bilinear map $[-,-]$ on a vector space $V$ satisfying, for any $a,b,c\in V$,  $[a,[b,c]]=[[a,b],c]+[b,[a,c]]$. 
It is a right Loday bracket if instead $[[a,b],c]=[a,[b,c]]+[[a,c],b]$. 
An antisymmetric Loday bracket is therefore a Lie bracket. 
We follow the terminology of \cite{VdB1}, as Loday brackets are usually called Leibniz brackets \cite{Lo}.} which descends to a Lie bracket on $A/[A,A]$. 
In the $(\alpha,\beta)$-twisted case, we can compute that 
\begin{equation}
\begin{aligned} \label{Eq:Out-Loday}
  & \br{a,\br{b,c}_\mult}_\mult  - \br{b,\br{a,c}_\mult}_\mult 
   - \br{\br{a,b}_\mult,c}_\mult \\
=&\,\, \mult \circ \beta_3 \dgal{a,\dgal{b,c}}_L
+ \mult \circ \alpha_1 \circ \tau_{(123)} \dgal{b,\dgal{c,a}}_L
+ \mult \circ \beta_2 \circ \tau_{(132)} \dgal{c,\dgal{a,b}}_L \\
&- \mult \circ \beta_3 \dgal{b,\dgal{a,c}}_L
- \mult \circ \alpha_1 \circ \tau_{(123)} \dgal{a,\dgal{c,b}}_L
- \mult \circ \alpha_2 \circ \tau_{(132)} \dgal{c,\dgal{b,a}}_L\,. 
\end{aligned}
\end{equation}
Here we extended the multiplication by $\mult:A^{\otimes 3}\to A$, while for each $1\leq i \leq 3$ an automorphism $\alpha$ of $A$ defines an automorphism $\alpha_i$ of $A^{\otimes 3}$ acting by $\alpha$ on the $i$-th factor and by $\id_A$ on the other two, e.g. $\alpha_1=\alpha \otimes \id_A\otimes \id_A$. 
Except in particular cases (e.g. $\alpha=\beta=\id_A$), the right-hand side of \eqref{Eq:Out-Loday} is not a difference of two double Jacobiators. Hence  \eqref{Eq:Out-Loday} does not vanish in general even for a double Poisson bracket, and $\br{-,-}_\mult$ is not a left Loday bracket. 
Nevertheless, we can modify Van den Bergh's result as follows. 

\begin{proposition} \label{Pr:Out-Lod}
Assume that $\dgal{-,-}$ is a double bracket associated with the $\alpha$-twisted outer bimodule structure. Introduce the (left) \emph{$\alpha$-twisted double Jacobiator} for any $a,b,c\in A$ by 
\begin{equation}  \label{Eq:Out-LodAlph}
\dgal{a,b,c}_{\alpha}:=   
 \alpha_3 \dgal{a,\dgal{b,c}}_L
+ \tau_{(123)} \circ \alpha_3 \dgal{b,\dgal{c,a}}_L
+  \tau_{(132)} \circ \alpha_3 \dgal{c,\dgal{a,b}}_L\,.
\end{equation}
If $\mult \circ \dgal{-,-,-}_{\alpha} - \mult \circ \dgal{-,-,-}_{\alpha}\circ \tau_{(12)}=0$, then $\br{-,-}_\mult$ is a left Loday bracket on $A$. In particular, it descends to a Lie bracket on $A/[A,A]$. 
\end{proposition}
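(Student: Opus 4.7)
The plan is to start from the explicit expression \eqref{Eq:Out-Loday} for the Loday defect
$\br{a,\br{b,c}_\mult}_\mult - \br{b,\br{a,c}_\mult}_\mult - \br{\br{a,b}_\mult,c}_\mult$, specialised to $\beta=\alpha$. The right-hand side then consists of six terms, each of the form $\pm\, \mult \circ \alpha_i \circ \tau_\sigma$ applied to an iterated double bracket $\dgal{-,\dgal{-,-}}_L$, with the twist $\alpha$ sitting on different slots $i\in\{1,2,3\}$ depending on the term. The goal is to reshape this expression so that it matches the definition \eqref{Eq:Out-LodAlph} of $\dgal{-,-,-}_\alpha$, in which $\alpha$ always acts on the third tensor factor.

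The key manipulation is the elementary commutation identity
$\alpha_i \circ \tau_\sigma = \tau_\sigma \circ \alpha_{\sigma^{-1}(i)}$, which follows directly from $\tau_\sigma(a_1\otimes \cdots \otimes a_n) = a_{\sigma^{-1}(1)} \otimes \cdots \otimes a_{\sigma^{-1}(n)}$. Applied to the four terms of \eqref{Eq:Out-Loday} where $\alpha_i$ appears with $\sigma \in \{(123),(132)\}$, it yields $\alpha_1 \circ \tau_{(123)} = \tau_{(123)} \circ \alpha_3$ and $\alpha_2 \circ \tau_{(132)} = \tau_{(132)} \circ \alpha_3$, pushing every $\alpha$ onto the third tensor factor. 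Grouping the six resulting terms, three of them reassemble as $\mult \circ \dgal{a,b,c}_\alpha$ while the other three give $-\,\mult \circ \dgal{b,a,c}_\alpha$, producing
\begin{equation*}
\br{a,\br{b,c}_\mult}_\mult - \br{b,\br{a,c}_\mult}_\mult - \br{\br{a,b}_\mult,c}_\mult
= \bigl(\mult \circ \dgal{-,-,-}_\alpha - \mult \circ \dgal{-,-,-}_\alpha \circ \tau_{(12)}\bigr)(a,b,c).
\end{equation*}
The hypothesis kills the right-hand side, yielding the left Loday identity on $A$. For the final claim, Lemma \ref{Lem:Out-Ind-2} already shows that $\br{-,-}_\mult$ descends to an antisymmetric bracket on $A/[A,A]$; the Loday identity descends linearly, and an antisymmetric left Loday bracket is a Lie bracket by the footnote definition.

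The conceptual input is minimal, so the main potential obstacle is purely bookkeeping: verifying that in each of the four relevant summands of \eqref{Eq:Out-Loday} the index $\sigma^{-1}(i)$ genuinely lands on $3$, so that the reshuffled expression matches \eqref{Eq:Out-LodAlph} slot-for-slot rather than leaving stray twists on other tensor factors. Once this alignment is checked, the recognition of the two copies of $\mult \circ \dgal{-,-,-}_\alpha$ and the reduction to the Lie bracket on $A/[A,A]$ are immediate.
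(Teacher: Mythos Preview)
Your proposal is correct and follows the same route as the paper: both start from the identity \eqref{Eq:Out-Loday} specialised to $\beta=\alpha$, recognise its right-hand side as $\mult\circ\dgal{a,b,c}_\alpha-\mult\circ\dgal{b,a,c}_\alpha$, and then invoke Lemma~\ref{Lem:Out-Ind-2} for the descent to $A/[A,A]$. The paper simply asserts this recognition in one line, whereas you make it explicit via the commutation $\alpha_i\circ\tau_\sigma=\tau_\sigma\circ\alpha_{\sigma^{-1}(i)}$, which is exactly the bookkeeping needed to justify that step.
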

\begin{proof}
To have a left Loday bracket, we need that the right-hand side of \eqref{Eq:Out-Loday} vanishes. But it equals $\mult \circ \dgal{a,b,c}_{\alpha} - \mult \circ \dgal{b,a,c}_{\alpha}$, which is zero by assumption. 
The map induced on $A/[A,A]$ is antisymmetric by Lemma \ref{Lem:Out-Ind-2}, which finishes the proof since an antisymmetric Loday bracket is a Lie bracket.
\end{proof}
The operation  $\dgal{-,-,-}_{\alpha} -  \dgal{-,-,-}_{\alpha}\circ \tau_{(12)}:A^{\times 3}\to A^{\otimes3}$
does not enjoy nice derivation rules (compare with \eqref{Eq:Out-dJac-all}), so checking the assumption of Proposition \ref{Pr:Out-Lod} seems to be a challenging task for $\alpha\neq \id_A$. 
When $\alpha=\id_A$ the map \eqref{Eq:Out-LodAlph} is just the double Jacobiator \eqref{Eq:dJac}, so Proposition \ref{Pr:Out-Lod} specialises to Van den Bergh's result \cite[\S2.4]{VdB1}. 

\medskip

Let us finally comment on the equivalences between double brackets associated with twisted outer bimodule structures.
First, by reverting the argument made in Example \ref{Ex:ctr-Equiv}, we can check that any double bracket associated with the $\alpha$-twisted outer bimodule structure (where $\beta=\alpha$) is equivalent to a double bracket associated with the outer bimodule structure (where $\beta=\alpha=\id_A$). 
Second, this argument can be adapted in the $(\alpha,\beta)$-twisted case to give an equivalence with a double bracket associated either with the $(\gamma,\id_A)$-twisted or the $(\id_A,\gamma)$-twisted outer bimodule structure (with $\gamma=\beta^{-1}\circ \alpha$ or $\gamma=\alpha^{-1}\circ \beta$, respectively). 
Note, however, that such equivalences do \emph{not} always preserve the Poisson property as Example \ref{Ex:ctr-Equiv} shows. 
This is precisely the reason why the $\alpha$-twisted double Jacobiator \eqref{Eq:Out-LodAlph} appears in Proposition \ref{Pr:Out-Lod}.

\subsection{... from the inner bimodule structure} \label{ss:E-Int}

Given a double bracket $\dgal{-,-}$ on $A$ which is associated with the $(\alpha,\beta)$-twisted inner bimodule structure, it satisfies the Leibniz rules  
\begin{align} \label{Eq:DbrIn-Der1}
    \dgal{a,bc}= \dgal{a,c}'\otimes \alpha(b)\dgal{a,c}'' 
+ \dgal{a,b}'\beta(c) \otimes \dgal{a,b}'' \,, \\
\label{Eq:DbrIn-Der2}
    \dgal{ac,b}=  \alpha(a) \dgal{c,b}'\otimes  \dgal{c,b}'' 
+ \dgal{a,b}' \otimes \dgal{a,b}'' \beta(c) \,. 
\end{align}
These Leibniz rules are consistent with the cyclic antisymmetry \eqref{Eq:Dbr-cyc}. 
By Lemma \ref{Lem:Ind-Equiv}, the swap automorphism ${}^\circ$ induces an equivalence with a double bracket associated with the $(\alpha,\beta)$-twisted \emph{outer} bimodule structure. Thus, many results covered in \ref{ss:E-Out} in the outer case can be directly rewritten, or proven in the same way. We start with the following reformulation of Proposition \ref{Pr:Op-Equiv}.

\begin{theorem} \label{Thm:Poi-OutIn}
A double bracket $\dgal{-,-}$ on $A$ associated with the $(\alpha,\beta)$-twisted inner bimodule structure is Poisson if and only if the equivalent double bracket $\dgal{-,-}^\circ$ associated with the $(\alpha,\beta)$-twisted outer bimodule structure is Poisson.
\end{theorem}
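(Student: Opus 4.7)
The plan is to derive Theorem \ref{Thm:Poi-OutIn} as an immediate application of Proposition \ref{Pr:Op-Equiv}, once we identify the swap bimodule of the $(\alpha,\beta)$-twisted outer structure with the $(\alpha,\beta)$-twisted inner structure. Indeed, starting from $a\cdot_{out}^{\alpha,\beta} d \cdot_{out}^{\alpha,\beta} b = \alpha(a) d' \otimes d''\beta(b)$, a short computation using $d^\circ=d''\otimes d'$ and Definition \ref{Def:Sw-Bim} yields
\begin{equation*}
(a\cdot_{out}^{\alpha,\beta} d^\circ \cdot_{out}^{\alpha,\beta} b)^\circ
= (\alpha(a) d'' \otimes d'\beta(b))^\circ
= d'\beta(b) \otimes \alpha(a) d''\,,
\end{equation*}
which matches the definition of $a\cdot_{in}^{\alpha,\beta} d\cdot_{in}^{\alpha,\beta} b$ coming from \eqref{Eq:Mod-in} and \eqref{Eq:Mod-Twis}. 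This extends the untwisted identification noted after Definition \ref{Def:Sw-Bim}, and in particular confirms that both twisted bimodules are swap-commuting.

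With this identification in hand, the forward implication is immediate: if $\dgal{-,-}$ is a double Poisson bracket associated with $\cdot_{in}^{\alpha,\beta}$, then setting $\cdot_1 = \cdot_{in}^{\alpha,\beta}$ and $\dgal{-,-}_1 = \dgal{-,-}$ in Proposition \ref{Pr:Op-Equiv} gives that $\dgal{-,-}^{\circ}$ is a double Poisson bracket associated with $\ast_1 = \cdot_{out}^{\alpha,\beta}$. For the converse, we use that ${}^\circ$ is an involution on $A^{\otimes 2}$, so if $\dgal{-,-}^\circ$ is Poisson and associated with $\cdot_{out}^{\alpha,\beta}$, a second application of Proposition \ref{Pr:Op-Equiv} (now with $\cdot_1 = \cdot_{out}^{\alpha,\beta}$ and $\dgal{-,-}_1 = \dgal{-,-}^\circ$) produces the double Poisson bracket $(\dgal{-,-}^\circ)^\circ = \dgal{-,-}$ associated with $\ast_1 = \cdot_{in}^{\alpha,\beta}$.

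There is essentially no obstacle here: the equivalence of the two brackets was already recorded in the paragraph preceding the statement, and Proposition \ref{Pr:Op-Equiv} does all the work on the Jacobiator side. The only thing to check carefully is the compatibility of the twists under taking swaps, which is the short computation above; everything else reduces to invoking Proposition \ref{Pr:Op-Equiv} twice.
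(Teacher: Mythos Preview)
Your proof is correct and follows exactly the approach the paper takes: the paper presents Theorem~\ref{Thm:Poi-OutIn} as a reformulation of Proposition~\ref{Pr:Op-Equiv}, relying on the identification (noted just before the theorem) of the swap of the $(\alpha,\beta)$-twisted inner bimodule with the $(\alpha,\beta)$-twisted outer bimodule. Your explicit computation of that swap and the two applications of Proposition~\ref{Pr:Op-Equiv} simply spell out what the paper leaves implicit.
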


\begin{lemma} \label{Lem:InJac}
In addition to the trivial case when $\dgal{-,-}=0$, 
the double Jacobiator \eqref{Eq:dJac} of a double bracket $\dgal{-,-}$ associated with the $(\alpha,\beta)$-twisted inner bimodule structure is a derivation in each of its entries when  $\alpha=\beta=\id_A$. 
In that case the second entry acts as the following derivation $A\to A^{\otimes 3}$: 
$\dgal{a,b_1b_2,c}=(1\otimes b_1 \otimes 1) \dgal{a,b_2,c} 
+ \dgal{a,b_1,c} (1\otimes 1 \otimes b_2)$, 
for all $a,b_1,b_2,c\in A$.  
\end{lemma}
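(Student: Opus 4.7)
The plan is to reduce this to the already-established outer case (Lemma \ref{Lem:OutJac}) via the equivalence furnished by the swap automorphism. Starting from a double bracket $\dgal{-,-}$ associated with the $(\alpha,\beta)$-twisted inner bimodule structure, Lemma \ref{Lem:Ind-Equiv} with $\Psi={}^\circ$ (as used in the proof of Proposition \ref{Pr:Op-Equiv}) yields that $\dgal{-,-}_{out}:=\dgal{-,-}^\circ$ is a double bracket associated with the $(\alpha,\beta)$-twisted outer bimodule structure (one quickly checks that the swap of the $(\alpha,\beta)$-twisted inner bimodule is the $(\alpha,\beta)$-twisted outer bimodule). The very same computation that leads to \eqref{Eq:Op-Equiv} in the proof of Proposition \ref{Pr:Op-Equiv} does not actually require the double bracket to be Poisson: it produces the identity
\[
\dgal{a,b,c} = -\tau_{(12)}\,\dgal{a,c,b}_{out}, \qquad a,b,c\in A,
\]
relating the two double Jacobiators. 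So I would record this identity first.

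Restricting now to $\alpha=\beta=\id_A$, Lemma \ref{Lem:OutJac} provides the derivation rule
\[
\dgal{a,c,b_1b_2}_{out} = (b_1\otimes 1\otimes 1)\,\dgal{a,c,b_2}_{out} + \dgal{a,c,b_1}_{out}\,(1\otimes 1\otimes b_2).
\]
Applying $-\tau_{(12)}$ to both sides and using the elementary identities $\tau_{(12)}\bigl((b_1\otimes 1\otimes 1)u\bigr) = (1\otimes b_1\otimes 1)\,\tau_{(12)}(u)$ and $\tau_{(12)}\bigl(u\,(1\otimes 1\otimes b_2)\bigr)=\tau_{(12)}(u)\,(1\otimes 1\otimes b_2)$ for $u\in A^{\otimes 3}$, which hold because $\tau_{(12)}$ leaves the third tensor slot untouched, yields exactly the claimed derivation rule in the second entry:
\[
\dgal{a,b_1b_2,c} = (1\otimes b_1\otimes 1)\,\dgal{a,b_2,c} + \dgal{a,b_1,c}\,(1\otimes 1\otimes b_2).
\]

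Having a derivation rule in a single entry (with respect to some $A$-bimodule structure on $A^{\otimes 3}$), Lemma \ref{Lem:Jac} then immediately propagates it to derivation rules, for the appropriate conjugated bimodule structures on $A^{\otimes 3}$, in each of the other two entries. This completes both assertions of the lemma.

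The work is essentially routine; the only point requiring attention is the bookkeeping of how $\tau_{(12)}$ interacts with left and right multiplication in $A^{\otimes 3}$, and confirming that the identity \eqref{Eq:Op-Equiv} from the proof of Proposition \ref{Pr:Op-Equiv} holds unconditionally (it does, because that derivation only invokes the cyclic antisymmetry of $\dgal{-,-}_1$ and the definition of the double Jacobiator, never the Poisson hypothesis).
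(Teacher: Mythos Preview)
Your proof is correct and follows essentially the same route as the paper: reduce to the outer case via the swap automorphism, using the identity $\dgal{-,-,-}=-\tau_{(12)}\circ \dgal{-,-,-}_{out}\circ \tau_{(23)}$ (which is \eqref{Eq:Op-Equiv} read in the opposite direction), and then transport the derivation rule from Lemma~\ref{Lem:OutJac} through $\tau_{(12)}$. The paper's version is terser, but the content is identical.
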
 
\begin{proof}
It suffices to combine Lemma \ref{Lem:OutJac} with the fact that 
$\dgal{-,-,-}=-\tau_{(12)}\circ \dgal{-,-,-}_2 \circ \tau_{(23)}$ (see the proof of Proposition \ref{Pr:Op-Equiv}), where $\dgal{-,-,-}_2$ is the double Jacobiator of $\dgal{-,-}^\circ$.
\end{proof}

\begin{lemma} \label{Lem:In-Ind-der}
If $\dgal{-,-}$ is associated with the $(\alpha,\beta)$-twisted inner bimodule structure, the $\kk$-bilinear operation 
\begin{equation} \label{Eq:in-brm}
    \br{-,-}_\mult = \mult \circ \dgal{-,-} : A\times A\to A\,, \quad 
    \br{a,b}_\mult = \dgal{a,b}' \dgal{a,b}''\,,
\end{equation}
is an $(\alpha,\beta)$-twisted derivation in its first entry. I.e.  
$\br{ab,c}_\mult=\alpha(a) \br{b,c}_\mult + \br{a,c}_\mult \beta(b)$, $\forall a,b,c\in A$. 
When $\alpha=\beta$, the map $\br{-,-}_\mult$ \eqref{Eq:in-brm} descends to a map $\br{-,-}_\mult:A\times A/[A,A]\to A$, which is an $\alpha$-twisted derivation in its first argument. 
Furthermore, this descends to an antisymmetric map 
$\br{-,-}_\mult:A/[A,A]\times A/[A,A]\to A/[A,A]$.
\end{lemma}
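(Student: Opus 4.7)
The approach is to mirror the proof of Lemma \ref{Lem:Out-Ind-2} by direct computation from the explicit inner Leibniz rules \eqref{Eq:DbrIn-Der1} and \eqref{Eq:DbrIn-Der2}; no machinery beyond the definitions is needed. For the first assertion, I apply $\mult$ to the left Leibniz rule \eqref{Eq:DbrIn-Der2}. The two tensors collapse into products $\alpha(a)\,\dgal{b,c}'\dgal{b,c}''$ and $\dgal{a,c}'\dgal{a,c}''\,\beta(b)$, so that $\br{ab,c}_\mult = \alpha(a)\br{b,c}_\mult + \br{a,c}_\mult \beta(b)$ follows directly.

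For the descent to $A \times A/[A,A] \to A$ when $\alpha=\beta$, I apply $\mult$ to the right Leibniz rule \eqref{Eq:DbrIn-Der1} to obtain
\[ \br{a,bc}_\mult = \dgal{a,c}'\,\alpha(b)\,\dgal{a,c}'' + \dgal{a,b}'\,\beta(c)\,\dgal{a,b}'', \]
and the analogous formula for $\br{a,cb}_\mult$ by swapping $b$ and $c$. Subtracting these, the difference equals $\dgal{a,c}'(\alpha(b)-\beta(b))\dgal{a,c}'' + \dgal{a,b}'(\beta(c)-\alpha(c))\dgal{a,b}''$, which vanishes identically (not just modulo commutators) under the hypothesis $\alpha=\beta$. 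Hence $\br{a,[A,A]}_\mult=0$ and the map factors through $A \times A/[A,A]$; the $\alpha$-twisted derivation property in the first entry is inherited unchanged from the first step.

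For the final descent to $A/[A,A] \times A/[A,A] \to A/[A,A]$, I use the derivation rule from the first step to compute
\[ \br{ab-ba,c}_\mult = [\alpha(a),\br{b,c}_\mult] + [\br{a,c}_\mult,\alpha(b)], \]
which lies in $[A,A]$ and hence vanishes in the quotient. Antisymmetry then follows from the cyclic antisymmetry \eqref{Eq:Dbr-cyc}: one has $\br{a,b}_\mult = \dgal{a,b}'\dgal{a,b}'' = -\dgal{b,a}''\dgal{b,a}'$, and the latter differs from $-\br{b,a}_\mult$ only by the commutator $[\dgal{b,a}'',\dgal{b,a}']$.

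I do not anticipate any genuine obstacle; the conceptual content worth highlighting is the \emph{exact} cancellation in the second paragraph, which explains why in the inner setting the quotient must be taken in the \emph{second} argument, whereas Lemma \ref{Lem:Out-Ind-2} quotients in the first. An alternative route via the swap-equivalence with the outer case (Theorem \ref{Thm:Poi-OutIn}) is possible but less economical, since $\mult$ does not intertwine with ${}^\circ$ on the nose (only modulo $[A,A]$), so one would still need to track commutator corrections.
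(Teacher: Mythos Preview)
Your proof is correct and follows exactly the approach the paper intends: the paper's own proof consists of the single sentence ``Easily adapted from Lemmas \ref{Lem:Out-Ind-der} and \ref{Lem:Out-Ind-2}'', and your direct computation from the inner Leibniz rules \eqref{Eq:DbrIn-Der1}--\eqref{Eq:DbrIn-Der2} is precisely that adaptation. Your observation that the second-argument descent is an \emph{exact} vanishing (mirroring the exact first-argument vanishing in the outer case) is the correct structural point and explains why the roles of the two arguments swap relative to Lemma \ref{Lem:Out-Ind-2}.
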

\begin{proof}
Easily adapted from Lemmas \ref{Lem:Out-Ind-der} and \ref{Lem:Out-Ind-2}. 
\end{proof}

\begin{proposition} \label{Pr:In-Lod}
Assume that $\dgal{-,-}$ is a double bracket associated with the $\alpha$-twisted inner bimodule structure. Introduce the (right) \emph{$\alpha$-twisted double Jacobiator} for any $a,b,c\in A$ by 
\begin{equation} \label{Eq:In-LodAlph}
\dgal{a,b,c}_{R,\alpha}:=   
 \alpha_1 \dgal{\dgal{a,b},c}_R
+ \tau_{(123)} \circ \alpha_1 \dgal{\dgal{b,c},a}_R
+  \tau_{(132)} \circ \alpha_1 \dgal{\dgal{c,a},b}_R\,.
\end{equation}
If $\mult \circ \dgal{-,-,-}_{R,\alpha} - \mult \circ \dgal{-,-,-}_{R,\alpha}\circ \tau_{(23)}=0$, then $\br{-,-}_\mult$ is a right Loday bracket on $A$. In particular, it descends to a Lie bracket on $A/[A,A]$. 
\end{proposition}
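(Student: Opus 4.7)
The plan is to mirror the proof of Proposition~\ref{Pr:Out-Lod}, exchanging left and right structures throughout. For $a,b,c\in A$, set
\[ L(a,b,c):=\br{\br{a,b}_\mult,c}_\mult - \br{a,\br{b,c}_\mult}_\mult - \br{\br{a,c}_\mult,b}_\mult, \]
so that the right Loday identity amounts to $L\equiv 0$, and the aim is to deduce this from the hypothesis.

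I would expand each of the three double brackets appearing in $L$ using the $\alpha$-twisted inner Leibniz rules \eqref{Eq:DbrIn-Der1}--\eqref{Eq:DbrIn-Der2}, and then apply $\mult$; this produces six summands in $A$. Three of them are naturally of the form $\mult\circ\alpha_1\dgal{\dgal{-,-},-}_R$, namely the ones where the $\alpha$-twist lands on the outer factor of the nested double bracket. The remaining three carry the $\alpha$-twist on the other tensor slot of the outer bracket; I rewrite them using the cyclic antisymmetry \eqref{Eq:Dbr-cyc} of $\dgal{-,-}$, which moves the twist back to slot $1$ at the cost of a sign and an $S_3$-permutation of the tensor factors. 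Using the cyclic symmetry of $\dgal{-,-,-}_{R,\alpha}$ (which one verifies analogously to \eqref{Eq:dJCyc}, via the relations $\alpha_1\tau_{(123)}=\tau_{(123)}\alpha_3$ and $\tau_{(123)}^2=\tau_{(132)}$) to re-bundle the six summands, I obtain
\[ L(a,b,c)=\mult\,\dgal{a,b,c}_{R,\alpha}-\mult\,\dgal{a,b,c}_{R,\alpha}\circ\tau_{(23)}, \]
which is zero by hypothesis. This is the inner analogue of Van den Bergh's identity \eqref{Eq:Out-Loday}.

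For the descent to $A/[A,A]$, Lemma \ref{Lem:In-Ind-der} (in the case $\beta=\alpha$) shows that $\br{-,-}_\mult$ induces an antisymmetric bilinear map $A/[A,A]\times A/[A,A]\to A/[A,A]$; an antisymmetric (right) Loday bracket is a Lie bracket, as recalled in the footnote introducing Loday brackets, so the proof concludes. The hard part will be the combinatorial bookkeeping in the middle step: correctly tracking the single $\alpha$-twist through the $S_3$-permutations and cyclic antisymmetry so that the six summands assemble precisely into the displayed difference, rather than into some equivalent-but-superficially-different combination. An alternative route would invoke the equivalence $\dgal{-,-}\leftrightarrow\dgal{-,-}^{\circ}$ of Proposition~\ref{Pr:Op-Equiv}, together with the identity $\mult\circ\dgal{-,-}^{\circ}=-\br{-,-}_\mult\circ\tau_{(12)}$ (from cyclic antisymmetry), to transport Proposition~\ref{Pr:Out-Lod} directly to the inner setting; this would however replace the direct bookkeeping with the comparable task of reconciling the left $\alpha$-twisted Jacobiator of $\dgal{-,-}^{\circ}$ with $\dgal{-,-,-}_{R,\alpha}$ up to $S_3$-manipulations, so the direct route is no more laborious and is arguably cleaner.
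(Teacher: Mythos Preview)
Your proposal is correct and follows essentially the same route as the paper: expand each nested bracket $\br{\br{-,-}_\mult,-}_\mult$ via the $\alpha$-twisted inner Leibniz rule \eqref{Eq:DbrIn-Der2}, use cyclic antisymmetry \eqref{Eq:Dbr-cyc} to bring every term into the form $\mult\circ\tau_\sigma\circ\alpha_1\,\dgal{\dgal{-,-},-}_R$, and then recognise the six summands as $\mult\,\dgal{a,b,c}_{R,\alpha}-\mult\,\dgal{a,c,b}_{R,\alpha}$. The only cosmetic difference is that the paper groups the six terms directly without appealing to the cyclic symmetry of $\dgal{-,-,-}_{R,\alpha}$, whereas you invoke that symmetry as an organising principle; both lead to the same identity, and your descent argument via Lemma~\ref{Lem:In-Ind-der} is exactly what the paper has in mind.
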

\begin{proof}
The definition of \eqref{Eq:In-LodAlph} uses \eqref{Eq:RightDmap}. It suffices to show that we have a right Loday bracket on $A$ under the given assumptions, i.e. we need the vanishing of 
\begin{equation} \label{Eq:In-Lod-pf1}
\br{\br{a,b}_{\mult},c}_{\mult}
-\br{\br{a,c}_{\mult},b}_{\mult}
-\br{a,\br{b,c}_{\mult}}_{\mult}\,.    
\end{equation}
We can compute, thanks to the cyclic antisymmetry \eqref{Eq:Dbr-cyc} of $\dgal{-,-}$, that 
\begin{align*}
    \br{\br{a,b}_{\mult},c}_{\mult}&=\br{\dgal{a,b}'\dgal{a,b}'',c}_{\mult} \\
&=\mult \,\left[
\alpha(\dgal{a,b}')\otimes \dgal{\dgal{a,b}'',c}
+\dgal{\dgal{a,b}',c}\otimes \alpha(\dgal{a,b}'')
\right] \\
&= \mult \circ \alpha_1 \, \dgal{\dgal{a,b},c}_R 
- \mult \circ \tau_{(132)}\circ \alpha_1\, \dgal{\dgal{b,a},c}_R\,,
\end{align*}
and similarly 
\begin{align*}
\br{\br{a,c}_{\mult},b}_{\mult}&=   
\mult \circ \alpha_1 \, \dgal{\dgal{a,c},b}_R 
- \mult \circ \tau_{(132)}\circ \alpha_1\, \dgal{\dgal{c,a},b}_R\,, \\
\br{a,\br{b,c}_{\mult}}_{\mult}&=
\mult \circ \tau_{(123)}\circ \alpha_1 \, \dgal{\dgal{c,b},a}_R 
- \mult \circ \tau_{(123)}\circ \alpha_1\, \dgal{\dgal{b,c},a}_R\,.
\end{align*}
Gathering these identities, we see that \eqref{Eq:In-Lod-pf1} equals  
\begin{align*}
&\mult \, \left[\alpha_1 \dgal{\dgal{a,b},c}_R
+ \tau_{(123)} \circ \alpha_1 \dgal{\dgal{b,c},a}_R
+  \tau_{(132)} \circ \alpha_1 \dgal{\dgal{c,a},b}_R  \right] \\
-&\mult \, \left[\alpha_1 \dgal{\dgal{a,c},b}_R
+ \tau_{(123)} \circ \alpha_1 \dgal{\dgal{c,b},a}_R
+  \tau_{(132)} \circ \alpha_1 \dgal{\dgal{b,a},c}_R  \right]\,,
\end{align*}
which vanishes under the given assumption. 
\end{proof}
We have the equality $\dgal{-,-,-}=\tau_{(12)}\circ \dgal{-,-,-}_{R,\id_A}\circ \tau_{(12)}$ between the double Jacobiator originally defined in \eqref{Eq:dJac} and its right twisted version \eqref{Eq:In-LodAlph}. 
This follows from a direct comparison of \eqref{Eq:In-LodAlph} and  \eqref{Eq:dJac-InR} in the case $\alpha=\id_A$. 
Thus, the assumptions of Proposition \ref{Pr:In-Lod} are satisfied for a double Poisson bracket associated with the inner bimodule structure (i.e. when $\alpha=\id_A$). 

\begin{theorem} \label{Thm-InOut}
Let $\dgal{-,-}_1$ and $\dgal{-,-}_2$ be double brackets associated with the inner and outer bimodule structures, respectively. 
Assume that they are equivalent under the swap automorphism: $\dgal{-,-}_1=\dgal{-,-}_2^\circ$. 
If one of them is Poisson, then the operations   
$\br{-,-}_{j,\mult}:= \mult \circ \dgal{-,-}_j$, $j=1,2$,  are Loday brackets (right for $j=1$, left for $j=2$) related through 
\begin{equation} \label{Eq:Thm-InOut}
\br{a,b}_{1,\mult} = - \br{b,a}_{2,\mult}\,.   
\end{equation}
In particular, they induce the same Lie bracket on $A/[A,A]$.
\end{theorem}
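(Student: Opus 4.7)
The plan is to assemble Theorem \ref{Thm-InOut} from results already proved in \ref{ss:E-Out} and \ref{ss:E-Int}, together with a short computation using cyclic antisymmetry.

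First, by Theorem \ref{Thm:Poi-OutIn}, the hypothesis that one of the two double brackets is Poisson forces both to be Poisson, since they are related by the swap automorphism. So I may assume $\dgal{-,-}_1$ and $\dgal{-,-}_2$ are both double Poisson brackets in the appropriate sense (inner and outer, respectively). Specialising Proposition \ref{Pr:Out-Lod} to $\alpha=\id_A$ yields that $\br{-,-}_{2,\mult}$ is a left Loday bracket on $A$ (this is Van den Bergh's original result). For the right Loday case, specialising Proposition \ref{Pr:In-Lod} to $\alpha=\id_A$ and invoking the equality $\dgal{-,-,-}=\tau_{(12)}\circ \dgal{-,-,-}_{R,\id_A}\circ \tau_{(12)}$ noted just after that proposition, the vanishing of the ordinary double Jacobiator gives the vanishing required by Proposition \ref{Pr:In-Lod}; hence $\br{-,-}_{1,\mult}$ is a right Loday bracket.

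Next I would verify the relation \eqref{Eq:Thm-InOut}. By definition,
\begin{equation*}
\br{a,b}_{1,\mult}=\mult\bigl(\dgal{a,b}_1\bigr)
=\mult\bigl(\dgal{a,b}_2^\circ\bigr)
=\dgal{a,b}_2''\,\dgal{a,b}_2'.
\end{equation*}
On the other hand, the cyclic antisymmetry \eqref{Eq:Dbr-cyc} applied to $\dgal{-,-}_2$ gives $\dgal{b,a}_2=-\dgal{a,b}_2^\circ$, so
\begin{equation*}
\br{b,a}_{2,\mult}=\mult\bigl(\dgal{b,a}_2\bigr)
=-\mult\bigl(\dgal{a,b}_2^\circ\bigr)
=-\dgal{a,b}_2''\,\dgal{a,b}_2'.
\end{equation*}
Comparing the two expressions yields exactly \eqref{Eq:Thm-InOut}.

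Finally, to see that the two Loday brackets induce the same Lie bracket on $A/[A,A]$, recall from Lemmas \ref{Lem:Out-Ind-2} and \ref{Lem:In-Ind-der} (both with $\alpha=\id_A$) that $\br{-,-}_{2,\mult}$ and $\br{-,-}_{1,\mult}$ both descend to antisymmetric maps on $A/[A,A]\times A/[A,A]$. Combining antisymmetry of $\br{-,-}_{2,\mult}$ modulo commutators with \eqref{Eq:Thm-InOut}, I get $\br{a,b}_{1,\mult}\equiv -\br{b,a}_{2,\mult}\equiv \br{a,b}_{2,\mult}\ \pmod{[A,A]}$, which is the required equality of induced Lie brackets. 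There is no real obstacle here: everything is a direct consequence of earlier results, and the only computation to perform is the cyclic-antisymmetry identity above.
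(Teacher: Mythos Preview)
Your proof is correct and essentially identical to the paper's. The only cosmetic differences are that the paper invokes Proposition~\ref{Pr:Op-Equiv} directly (rather than its reformulation Theorem~\ref{Thm:Poi-OutIn}) and computes \eqref{Eq:Thm-InOut} by applying cyclic antisymmetry of $\dgal{-,-}_1$ first and the equivalence $\dgal{-,-}_1=\dgal{-,-}_2^\circ$ second, whereas you do these in the opposite order; the paper also leaves the final ``same Lie bracket'' step implicit, while you spell it out.
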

\begin{proof}
By Proposition \ref{Pr:Op-Equiv}, if one double bracket is Poisson, then both double brackets are. Therefore, by Propositions \ref{Pr:Out-Lod} (in the outer case) and \ref{Pr:In-Lod} (in the inner case), we get that the maps $\br{-,-}_{j,\mult}$ are Loday brackets which descend to Lie brackets. 
So we are left to prove \eqref{Eq:Thm-InOut}. This follows from   
$$\br{a,b}_{1,\mult} = \mult \circ \dgal{a,b}_1=- \mult \circ \dgal{b,a}_1^\circ = - \mult \circ \dgal{b,a}_2=-\br{b,a}_{2,\mult}\,,$$
using the equivalence of the two double brackets and the cyclic antisymmetry of $\dgal{-,-}_1$. 
\end{proof}

\begin{remark}
It is well-known \cite{Lo} that if $[-,-]$ is a left (resp. right) Loday bracket, then $[x,y]_{\bullet}=[y,x]$ defines a right (resp. left) Loday bracket. Thus, Theorem \ref{Thm-InOut} can be interpreted as lifting this principle to double brackets when $[-,-]=\br{-,-}_\mult$ is induced by a double Poisson bracket associated with the outer (resp. inner) bimodule structure.
\end{remark}

\subsection{... from the right bimodule structure} \label{ss:E-Right}

We endow $A^{\otimes 2}$ with the $(\alpha,\beta)$-twisted right bimodule structure (obtained by twisting \eqref{Eq:Mod-R}). A double bracket on $A$ associated with this bimodule structure satisfies the right Leibniz rule 
\begin{equation} \label{Eq:DbrR-Der1}
    \dgal{a,bc}= \dgal{a,c}'\otimes \alpha(b)\dgal{a,c}'' 
+ \dgal{a,b}'  \otimes \dgal{a,b}''\beta(c)\,, 
\end{equation}
while we have the left Leibniz rule 
\begin{equation} \label{Eq:DbrR-Der2}
    \dgal{ac,b}=  \alpha(a)\dgal{c,b}'\otimes\dgal{c,b}'' 
+ \dgal{a,b}'\beta(c) \otimes \dgal{a,b}'' \,, 
\end{equation}
meaning that the left Leibniz rule is a derivation for the $(\alpha,\beta)$-twisted \emph{left} bimodule structure. 
Let us remind the reader that the case $\alpha=\beta=\id_A$ should be seen as a formalisation of the operation $\br{-\stackrel{\otimes}{,}-}$ (see the introduction) which is well-known in the integrable systems community. In particular, many results presented below are expected. 

\medskip

A short computation yields that the double Jacobiator  \eqref{Eq:dJac} satisfies for any $a,b,c_1,c_2\in A$
\begin{align*}
\dgal{a,b,c_1c_2}=&
\,\,( 1 \otimes 1\otimes \alpha(c_1) ) \dgal{a,\dgal{b,c_2}}_L
+ \dgal{a,\dgal{b,c_1}}_L (1 \otimes 1\otimes \beta(c_2)) \\
&+( 1 \otimes 1\otimes \alpha^2(c_1) )\, \tau_{(123)} \dgal{b,\dgal{c_2,a}}_L
+\big(\tau_{(123)} \dgal{b,\dgal{c_1,a}}_L\big)\, (1 \otimes 1\otimes \beta^2(c_2))\\
&+( 1 \otimes 1\otimes \alpha(c_1) )\, \tau_{(132)} \dgal{c_2,\dgal{a,b}}_L
+\big(\tau_{(132)} \dgal{c_1,\dgal{a,b}}_L\big)\, (1 \otimes 1\otimes \beta(c_2))\\
&+\dgal{c_2,a}'' \otimes \dgal{b,\alpha(c_1)}' \otimes \dgal{b,\alpha(c_1)}'' \beta(\dgal{c_2,a}')\\
&+ \dgal{c_1,a}''\otimes \dgal{b,\beta(c_2)}' \otimes \alpha(\dgal{c_1,a}') \dgal{b,\beta(c_2)}''\,.
\end{align*}
Contrary to the study of double brackets associated with the outer/inner bimodule, in the case $\alpha=\beta=\id_A$ the double Jacobiator fails to be a derivation in all its arguments in general as we have 
\begin{equation}
\begin{aligned} \label{Eq:dJac-R-der}
&\dgal{a,b,c_1c_2}=
( 1 \otimes 1\otimes c_1 ) \dgal{a,b,c_2}
+ \dgal{a,b,c_1} (1 \otimes 1\otimes c_2) \\
&\qquad +\dgal{c_2,a}'' \otimes \dgal{b,c_1}' \otimes \dgal{b,c_1}'' \dgal{c_2,a}'
+ \dgal{c_1,a}''\otimes \dgal{b,c_2}' \otimes \dgal{c_1,a}' \dgal{b,c_2}''\,.
\end{aligned}
\end{equation}
We can nevertheless state the following result in the case $\alpha=\beta=\id_A$.

\begin{lemma} \label{Lem:R-wJac}
For $\sigma=(12)$, 
The $\sigma$-weak double Jacobiator \eqref{Eq:wdJac} of a double bracket $\dgal{-,-}$ associated with the right bimodule structure is a derivation in each of its entries. 
In that case the third entry acts as the following derivation $A\to A^{\otimes 3}$: 
$\sigma\wk\!\dgal{a,b,c_1c_2}=(1\otimes 1\otimes c_1)\,\, \sigma\wk\!\dgal{a,b,c_2} 
+ \sigma\wk\!\dgal{a,b,c_1} \,(1\otimes 1 \otimes c_2)$, 
for all $a,b,c_1,c_2\in A$.  
\end{lemma}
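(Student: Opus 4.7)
The plan is to extract the derivation property in the third entry directly from \eqref{Eq:dJac-R-der}, and then invoke Lemma \ref{Lem:wJac} for the other two entries. First, I would decompose $\dgal{a,b,c_1c_2} = \Delta(a,b;c_1,c_2) + X(a,b;c_1,c_2)$, where $\Delta(a,b;c_1,c_2) := (1\otimes 1\otimes c_1)\dgal{a,b,c_2} + \dgal{a,b,c_1}(1\otimes 1\otimes c_2)$ collects the two ``derivation'' terms, and
\[
X(a,b;c_1,c_2) := \dgal{c_2,a}''\otimes \dgal{b,c_1}'\otimes \dgal{b,c_1}''\dgal{c_2,a}' + \dgal{c_1,a}''\otimes \dgal{b,c_2}'\otimes \dgal{c_1,a}'\dgal{b,c_2}''
\]
is the anomalous remainder.

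Since $\tau_{(12)}^{-1} = \tau_{(12)}$, the weak Jacobiator reads $\sigma\wk\!\dgal{a,b,c_1c_2} = \dgal{a,b,c_1c_2} - \tau_{(12)}\dgal{b,a,c_1c_2}$. Because $\tau_{(12)}$ acts trivially on the third tensor factor, it commutes with multiplication by $1\otimes 1\otimes c$ on $A^{\otimes 3}$; consequently the two $\Delta$-contributions repackage cleanly into the target expression $(1\otimes 1\otimes c_1)\,\sigma\wk\!\dgal{a,b,c_2} + \sigma\wk\!\dgal{a,b,c_1}\,(1\otimes 1\otimes c_2)$, and the whole problem reduces to the identity
\[
X(a,b;c_1,c_2) \;=\; \tau_{(12)} X(b,a;c_1,c_2).
\]

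The key step will be to apply the cyclic antisymmetry \eqref{Eq:Dbr-cyc} to each double bracket appearing in $\tau_{(12)}X(b,a;c_1,c_2)$. For instance, writing $\dgal{c_1,a} = p\otimes q$ and $\dgal{b,c_2} = r\otimes s$ in Sweedler notation, one has $\dgal{a,c_1} = -q\otimes p$ and $\dgal{c_2,b} = -s\otimes r$, so the two sign flips combine inside the product in the third tensor slot to produce a plus sign. A termwise inspection then matches the first summand of $X(a,b;c_1,c_2)$ with the second summand of $\tau_{(12)}X(b,a;c_1,c_2)$, and vice versa. The main obstacle will be the Sweedler bookkeeping: four distinct double brackets are involved, and two of their components sit inside a product in the third slot, so one must ensure that $\tau_{(12)}$ permutes only the two outer tensor factors (not the factors sitting inside the internal product) and that the signs line up correctly after relabelling.

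With the derivation property in the third argument in hand, Lemma \ref{Lem:wJac} (applied with $\sigma'=\sigma$) together with the cyclic symmetry \eqref{Eq:wdJCyc} of the weak double Jacobiator upgrades it to a derivation in each of the three arguments, for suitably conjugated $A$-bimodule structures on $A^{\otimes 3}$, which completes the proof.
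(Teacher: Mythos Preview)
Your proposal is correct and follows essentially the same approach as the paper: both start from \eqref{Eq:dJac-R-der}, split off the derivation part from the anomalous remainder, and then use the cyclic antisymmetry \eqref{Eq:Dbr-cyc} to cancel the remainder terms pairwise (your first-with-second matching corresponds exactly to the paper's third-with-sixth and fourth-with-fifth cancellations), before invoking Lemma \ref{Lem:wJac} for the other entries. Your packaging into $\Delta$ and $X$ is a tidy reformulation of the same computation.
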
 
\begin{proof}
Combining the definition \eqref{Eq:wdJac} with \eqref{Eq:dJac-R-der}, we have for $\sigma=(12)$ that 
\begin{align*}
   & \sigma\wk\!\dgal{a,b,c_1c_2}=\dgal{a,b,c_1c_2}-\tau_{(12)}\dgal{b,a,c_1c_2} \\
&\quad =( 1 \otimes 1\otimes c_1 ) \,\,\sigma\wk\!\dgal{a,b,c_2}
+ \sigma\wk\!\dgal{a,b,c_1}\,\, (1 \otimes 1\otimes c_2) \\
&\qquad +\dgal{c_2,a}'' \otimes \dgal{b,c_1}' \otimes \dgal{b,c_1}'' \dgal{c_2,a}'
+ \dgal{c_1,a}''\otimes \dgal{b,c_2}' \otimes \dgal{c_1,a}' \dgal{b,c_2}''\\
&\qquad 
-\dgal{a,c_1}' \otimes\dgal{c_2,b}'' \otimes  \dgal{a,c_1}'' \dgal{c_2,b}'
- \dgal{a,c_2}' \otimes\dgal{c_1,b}''\otimes  \dgal{c_1,b}' \dgal{a,c_2}''\,.
\end{align*}
Due to the cyclic antisymmetry \eqref{Eq:Dbr-cyc}, the third and sixth (resp. fourth and fifth) terms in the second equality cancel out. 
This gives the claimed derivation property for the $\sigma$-weak double Jacobiator in the third argument, hence we have a derivation in each entry by Lemma \ref{Lem:wJac}.  
\end{proof}

As a consequence of Lemma \ref{Lem:R-wJac}, double $(12)$-weak Poisson brackets are more interesting than double Poisson brackets in the case associated with the right bimodule structure. This is illustrated in the next example.

\begin{example}
Let $A=\kk\langle x,y\rangle$. 
By Lemma \ref{Lem:Gen}, for any $\lambda\in \kk^\times$, we have a double bracket satisfying 
$$\dgal{x,x}=0,\quad \dgal{y,y}=0,\quad \dgal{x,y}=\lambda\, 1\otimes 1\,.$$
Combining Lemmas \ref{Lem:wJac2} and  \ref{Lem:R-wJac}, this double bracket is $(12)$-weak Poisson. However, it is not Poisson: although the double Jacobiator vanishes on generators, we have 
$\dgal{x,x,y^2}=-2\lambda^2\, 1\otimes 1 \otimes 1$.
\end{example}

Another difference with the outer/inner cases is that double brackets associated with the (twisted) right bimodule structure do not enjoy nice properties after multiplication of the two components as in \eqref{Eq:Out-brm}. In the present situation, there is no need to multiply factors of the double bracket if we quotient out by  commutators.

\begin{lemma} \label{Lem:R-Ind-der}
If $\dgal{-,-}$ is associated with the $\alpha$-twisted right bimodule structure (when $\beta=\alpha$), the double bracket descends to maps  
\begin{equation} \label{Eq:in-brBullet}
    {}_\bullet\!\dgal{-,-} : A/[A,A]\times A\to A/[A,A]\otimes A\,, \quad 
    \dgal{-,-}_\bullet :A\times A/[A,A]  \to A \otimes  A/[A,A]\,.
\end{equation}
These operations satisfy the following  $\alpha$-twisted derivations:
\begin{align*}
    {}_\bullet\!\dgal{a,bc}&=b\cdot_r^\alpha {}_\bullet\!\dgal{a,c} + {}_\bullet\!\dgal{a,b}\cdot_r^\alpha c\,, \\
    \dgal{bc,a}_\bullet&=b\cdot_l^\alpha \dgal{c,a} + \dgal{b,a}\cdot_l^\alpha c\,,
\end{align*}
where $\cdot_r^\alpha$ (resp. $\cdot_l^\alpha$) is the bimodule structure on $A/[A,A]\otimes A$ (resp. $A\otimes A/[A,A]$) induced by the $\alpha$-twisted right (resp. left) bimodule structure on $A^{\otimes 2}$.  
Furthermore, both maps descend to the same operation  
${}_\bullet\!\dgal{-,-}_\bullet :A/[A,A]\times A/[A,A]\to A/[A,A]\otimes A/[A,A]$.
\end{lemma}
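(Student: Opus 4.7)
The plan is to carry out a three-step descent argument, using the explicit form of the Leibniz rules \eqref{Eq:DbrR-Der1}--\eqref{Eq:DbrR-Der2} when $\beta=\alpha$.

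First, I would check that $\dgal{-,-}$ sends commutators in either argument into the appropriate ``commutator slot'' of the output. Applying the left Leibniz rule \eqref{Eq:DbrR-Der2} to $\dgal{ab-ba,c}$ should produce, after cancellation, the expression
\begin{equation*}
\dgal{ab-ba,c} = [\alpha(a),\dgal{b,c}']\otimes \dgal{b,c}'' + [\dgal{a,c}',\alpha(b)]\otimes\dgal{a,c}''\,,
\end{equation*}
which lies in $[A,A]\otimes A$. This guarantees the descent ${}_\bullet\!\dgal{-,-}: A/[A,A]\times A \to A/[A,A]\otimes A$ exists. The symmetric calculation based on the right Leibniz rule \eqref{Eq:DbrR-Der1} should give $\dgal{a,bc-cb}\in A\otimes [A,A]$, producing $\dgal{-,-}_\bullet$.

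Second, I would verify the two twisted derivation rules. Since $a\cdot_r^\alpha d\cdot_r^\alpha b = d'\otimes \alpha(a) d''\alpha(b)$ acts only on the second tensor factor, the $\alpha$-twisted right bimodule structure descends to a bimodule structure on $A/[A,A]\otimes A$. Projecting the right Leibniz rule \eqref{Eq:DbrR-Der1} through the first-slot quotient then yields the claimed rule for ${}_\bullet\!\dgal{-,-}$; the analogous argument using $\cdot_l^\alpha$ (which acts only on the first slot) gives the rule for $\dgal{-,-}_\bullet$.

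Finally, for the agreement of the iterated descents, I would observe that the computation performed in the first step for descending $\dgal{-,-}_\bullet$ in its first argument still holds inside ${}_\bullet\!\dgal{-,-}$, because its commutator terms live in the second tensor slot and are therefore unaffected by having already quotiented the first slot. Hence ${}_\bullet\!\dgal{-,-}$ descends further in its second argument, and symmetrically $\dgal{-,-}_\bullet$ descends in its first. Both iterated operations coincide with $(\pi\otimes \pi)\circ \dgal{-,-}$, where $\pi:A\to A/[A,A]$ is the canonical projection, so they define a common map ${}_\bullet\!\dgal{-,-}_\bullet$. I do not foresee any genuine obstacle; the only delicate point is confirming that $\cdot_r^\alpha$ and $\cdot_l^\alpha$ respect the one-sided commutator quotients, which is immediate because each of these bimodule structures leaves one tensor factor untouched.
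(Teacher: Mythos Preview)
Your proposal is correct and follows essentially the same route as the paper's proof: compute $\dgal{a,bc-cb}$ and $\dgal{ab-ba,c}$ via the Leibniz rules to see that commutators land in the appropriate slot, then inherit the derivation rules by projection, and finally observe that the two one-sided descents can be combined. The paper carries this out more tersely (doing the case of $\dgal{-,-}_\bullet$ explicitly and declaring the rest ``similar''), while you spell out the symmetric computation and the compatibility of $\cdot_r^\alpha,\cdot_l^\alpha$ with the quotients; note only that in your final paragraph the phrase ``descending $\dgal{-,-}_\bullet$ in its first argument'' should read ``descending ${}_\bullet\!\dgal{-,-}$ in its second argument'' to match the logic you then give.
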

\begin{proof}
To see that $\dgal{-,-}_\bullet$ is well-defined, it suffices to note that by \eqref{Eq:DbrR-Der1} 
\begin{equation}
    \dgal{a,bc-cb}=\dgal{a,b}'\otimes [\dgal{a,b}'',\alpha(c)] + \dgal{a,c}'\otimes [\alpha(b),\dgal{a,c}'']\,,
\end{equation}
so that $\dgal{a,-}$ sends $[A,A]$ to $A\otimes [A,A]$ for any $a\in A$. Next, $\dgal{-,-}_\bullet$ inherits from the left Leibniz rule  \eqref{Eq:DbrR-Der2} the stated $\alpha$-twisted left derivation in its first argument. 
The case of ${}_\bullet\!\dgal{-,-}$ is similar. 
Gathering both constructions, we can prove that we have a well-defined map ${}_\bullet\!\dgal{-,-}_\bullet$. 
\end{proof}

\begin{proposition} \label{Pr:R-Ind}
Let $\dgal{-,-}$ be a double $(12)$-weak Poisson bracket associated with the right bimodule structure.  
Then the operation ${}_\bullet\!\dgal{-,-}_\bullet$ from Lemma \ref{Lem:R-Ind-der} uniquely extends  to a Poisson bracket on $\Sym(A/[A,A])$. 
\end{proposition}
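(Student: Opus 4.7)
Write $V:=A/[A,A]$ with canonical projection $\pi:A\to V$, let $\mu:V\otimes V\to \Sym^2(V)$ denote the multiplication in $\Sym(V)$, and more generally set $m_k:=\mu_k\circ \pi^{\otimes k}:A^{\otimes k}\to \Sym^k(V)$, where $\mu_k$ is the $k$-fold multiplication in $\Sym(V)$. Since $\Sym(V)$ is commutative, $m_k\circ \tau_\sigma=m_k$ for every $\sigma\in S_k$; this $S_k$-invariance is the main technical tool.

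The plan is to define, for $\bar a,\bar b\in V$ with lifts $a,b\in A$,
$\br{\bar a,\bar b}_S := \mu({}_\bullet\!\dgal{\bar a,\bar b}_\bullet)=m_2(\dgal{a,b})\in \Sym^2(V)$,
which is well-defined by Lemma \ref{Lem:R-Ind-der}, and to extend $\br{-,-}_S$ to $\Sym(V)^{\times 2}$ by the Leibniz rule in each argument. Any Poisson extension of ${}_\bullet\!\dgal{-,-}_\bullet$ is then forced to coincide with this one, yielding uniqueness. Antisymmetry on $V$ is immediate from the cyclic antisymmetry \eqref{Eq:Dbr-cyc} combined with $m_2\circ \tau_{(12)}=m_2$, and propagates to $\Sym(V)$ by the Leibniz rule.

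The substantive content is the Jacobi identity on linear elements. Expanding via Leibniz and the definition gives
$\br{\br{\bar a,\bar b}_S,\bar c}_S = m_3\bigl(\dgal{\dgal{a,b},c}_L+\dgal{\dgal{a,b},c}_R\bigr)$.
Using the cyclic antisymmetry $\dgal{u,v}=-\tau_{(12)}\dgal{v,u}$ together with the $S_3$-invariance of $m_3$, I will derive the three conversion identities
\begin{align*}
m_3\bigl(\dgal{\dgal{a,b},c}_L\bigr) &= -m_3\bigl(\dgal{c,\dgal{a,b}}_L\bigr),\\
m_3\bigl(\dgal{\dgal{a,b},c}_R\bigr) &= -m_3\bigl(\dgal{c,\dgal{a,b}}_R\bigr),\\
m_3\bigl(\dgal{x,\dgal{y,z}}_R\bigr) &= -m_3\bigl(\dgal{x,\dgal{z,y}}_L\bigr).
\end{align*}
Substituting these into the cyclic sum and regrouping against the expression \eqref{Eq:dJac} of the double Jacobiator, the sum collapses to
$$\br{\br{\bar a,\bar b}_S,\bar c}_S+\br{\br{\bar b,\bar c}_S,\bar a}_S+\br{\br{\bar c,\bar a}_S,\bar b}_S \,=\, -m_3(\dgal{a,b,c})+m_3(\dgal{a,c,b}).$$

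At this point the $(12)$-weak Poisson hypothesis enters: it gives $\dgal{a,b,c}=\tau_{(12)}\dgal{b,a,c}$, and combined with the cyclic symmetry \eqref{Eq:dJCyc} (which yields $\dgal{b,a,c}=\tau_{(123)}\dgal{a,c,b}$) and the $S_3$-invariance of $m_3$, this gives $m_3(\dgal{a,b,c})=m_3(\dgal{a,c,b})$. Hence Jacobi holds on $V^{\times 3}$, and a standard biderivation argument extends it to all of $\Sym(V)$, completing the construction.

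The main obstacle is the careful bookkeeping of Sweedler indices and tensor-factor permutations in the derivation of the three conversion identities; once these are in place, everything else is an elementary rearrangement powered by the $S_3$-invariance of $m_3$ and the $(12)$-weak hypothesis.
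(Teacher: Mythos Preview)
Your proposal is correct and follows essentially the same route as the paper: define the bracket on $V=A/[A,A]$ via $m_2\circ\dgal{-,-}$, expand the Jacobiator on linear elements using Leibniz, exploit the $S_3$-invariance of $m_3$ to rewrite the cyclic sum as a difference of two double Jacobiators, and then invoke the $(12)$-weak hypothesis together with \eqref{Eq:dJCyc} to conclude. The only cosmetic difference is that the paper expands $\br{a,\br{b,c}}$ rather than $\br{\br{a,b},c}$ and invokes the pre-established identity \eqref{Eq:dJac-R} to handle the $R$-terms in one stroke (arriving at $\pi_3(\dgal{a,b,c}-\dgal{b,a,c})$), whereas you re-derive the needed rewriting via your three conversion identities; up to a sign and a cyclic permutation your final expression $-m_3(\dgal{a,b,c})+m_3(\dgal{a,c,b})$ is the same thing.
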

\begin{proof}
To ease notations, we also denote by $a\in A$ the image of such an element in $A/[A,A]$. We write the commutative product on the symmetric tensor algebra $\Sym(A/[A,A])$ by $\bullet$. 
For any $n \geq1$, $\pi_n:A^{\otimes n}\to \Sym(A/[A,A])$ is the morphism defined by 
$\pi_n(a_1\otimes \ldots \otimes a_n)=a_1 \bullet \ldots \bullet a_n$. It will be useful to note that for any $\sigma\in S_n$, $\pi_n \circ \tau_\sigma = \pi_n$. 

We claim that the desired Poisson bracket is the unique operation satisfying  
$$\br{-,-}_{\Sym}:\Sym(A/[A,A])\times \Sym(A/[A,A])\to \Sym(A/[A,A])\,, 
\br{a,b}_{\Sym}=\pi_2\,\dgal{a,b}\,.$$
(It is unique by extension through the usual Leibniz rules of a Poisson bracket.)
This operation is given on elements of $A/[A,A]$, seen as generators, by extending the operation ${}_\bullet\!\dgal{-,-}_\bullet$ through $(A/[A,A])^{\times 2}\to \Sym(A/[A,A])$ by symmetric multiplication of the two components. 
Antisymmetry follows from the cyclic antisymmetry \eqref{Eq:Dbr-cyc}: 
$$\br{a,b}_{\Sym}=\pi_2\,\dgal{a,b}=
-\pi_2\, \dgal{b,a}^\circ=-\pi_2\,\dgal{b,a} 
= -\br{b,a}_{\Sym}\,.$$ 
Jacobi identity follows from the vanishing of the $(12)$-weak double Jacobiator. Indeed, we have that  
\begin{align*}
\br{a,\br{b,c}_{\Sym}}_{\Sym}
&=\br{a,\dgal{b,c}'}_{\Sym}\bullet \dgal{b,c}''
+ \dgal{b,c}'\bullet \br{a,\dgal{b,c}''}_{\Sym}\\
&=\pi_3(\dgal{a,\dgal{b,c}}_L + \dgal{a,\dgal{b,c}}_R)\,.
\end{align*}
We can then write  
\begin{align*}
&\br{a,\br{b,c}_{\Sym}}_{\Sym} + \br{b,\br{c,a}_{\Sym}}_{\Sym}
+ \br{c,\br{a,b}_{\Sym}}_{\Sym} \\
=&\pi_3(\dgal{a,\dgal{b,c}}_L+\tau_{(123)}\dgal{b,\dgal{c,a}}_L + \tau_{(132)} \dgal{c,\dgal{a,b}}_L) \\
&+\pi_3( \dgal{a,\dgal{b,c}}_R+\tau_{(123)}\dgal{c,\dgal{a,b}}_R +\tau_{(132)} \dgal{b,\dgal{c,a}}_R)\,.
\end{align*}
Using \eqref{Eq:dJac-R}, this equals 
$\pi_3(\dgal{a,b,c}-\dgal{b,a,c})=
\pi_3(\dgal{a,b,c}-\tau_{(12)}\dgal{b,a,c})$. 
By definition, this is the image under $\pi_3$ of the 
 $(12)$-weak double Jacobiator $\sigma\wk\!\dgal{a,b,c}$ with  $\sigma=(12)$. 
\end{proof}

Instead of working on $A/[A,A]$ and $\Sym(A/[A,A])$ as in the previous two results, we can consider the abelianization of $A$, given by $A^{\ab}=A/J_A$ where $J_A$ is the (two-sided) ideal generated by elements of $[A,A]$. 

\begin{lemma} \label{Lem:Dbr-Ab-R}
If $\dgal{-,-}$ is associated with the $\alpha$-twisted right bimodule structure, then the double bracket descends to $A^{\ab}$.
\end{lemma}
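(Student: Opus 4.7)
The plan is to reduce the statement to showing that $\dgal{a,J_A}\subseteq A\otimes J_A$ for every $a\in A$. Indeed, the double bracket descends to $A^{\ab}\times A^{\ab}\to A^{\ab}\otimes A^{\ab}$ as soon as both $\dgal{A,J_A}\subseteq A\otimes J_A$ and $\dgal{J_A,A}\subseteq J_A\otimes A$ hold, and by the cyclic antisymmetry \eqref{Eq:Dbr-cyc} combined with the identity $(A\otimes J_A)^\circ=J_A\otimes A$, the second inclusion will follow automatically from the first.

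First I would recall the computation from inside the proof of Lemma \ref{Lem:R-Ind-der}: the right Leibniz rule \eqref{Eq:DbrR-Der1} with $\beta=\alpha$ already yields $\dgal{a,[b,c]}\in A\otimes [A,A]$ for all $a,b,c\in A$. To upgrade this from $[A,A]$ to its two-sided closure $J_A$, I would observe that a generic element of $J_A$ is a $\kk$-linear combination of products of the form $x[b,c]y$ with $x,y,b,c\in A$, and then apply \eqref{Eq:DbrR-Der1} twice to $\dgal{a,x[b,c]y}$. Each resulting term contains either a factor $\dgal{a,[b,c]}$ in the tensor expression (already landing in $A\otimes [A,A]\subseteq A\otimes J_A$ by the previous observation), or a factor $\alpha([b,c])=[\alpha(b),\alpha(c)]$ that multiplies the second tensor component on one side; since $\alpha$ is an automorphism it preserves $[A,A]$, and since $J_A$ is two-sided, arbitrary multiplication from $A$ keeps the result inside $A\otimes J_A$. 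Extending by $\kk$-linearity then gives the required inclusion.

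I do not anticipate any serious conceptual obstacle: the argument is a routine bookkeeping of iterated Leibniz expansions, resting on the two facts that $\alpha$ preserves $[A,A]$ and that $J_A$ is two-sided. The only subtle point, which is really the reason the statement holds in this form, is that in \eqref{Eq:DbrR-Der1} every non-trivial multiplier produced by a Leibniz expansion appears inside the second tensor component, never the first. This is precisely the feature of the right (as opposed to the outer or inner) bimodule structure that ensures the $J_A$-valued factor stays on a single side of the tensor, so that the final inclusion is an inclusion into $A\otimes J_A$ rather than into $(J_A\otimes A) + (A\otimes J_A)$.
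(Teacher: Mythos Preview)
Your argument is correct and follows the same strategy as the paper's proof: expand $\dgal{a,bjc}$ via the right Leibniz rule, invoke Lemma~\ref{Lem:R-Ind-der} for the middle term $\dgal{a,j}$, and handle $\dgal{J_A,A}$ by cyclic antisymmetry. Your observation that every multiplier produced by \eqref{Eq:DbrR-Der1} lands in the second tensor factor, giving the sharper inclusion $\dgal{A,J_A}\subseteq A\otimes J_A$, is in fact cleaner than the paper's stated inclusion into $A\otimes J_A+J_A\otimes A$ (which stems from an apparent misplacement of the factors $\alpha(b)$ and $\alpha(b)\alpha(j)$ on the first tensor component in the paper's displayed formula---the weaker conclusion still suffices, of course).
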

\begin{proof}
 We get that for any $a,b,c\in A$ and $j\in [A,A]$, 
\begin{align*}
\dgal{a,b j c}=
\dgal{a,b}'\otimes \dgal{a,b}'' \alpha(j) \alpha(c) 
+ \alpha(b)\dgal{a,j'}\otimes \dgal{a,j}'' \alpha(c) 
+\alpha(b)\alpha(j) \dgal{a,c}'\otimes \dgal{a,c}''\,.
\end{align*}
Since $\alpha(j)\in [A,A]$, the first and third terms belong to $A\otimes J_A$ and $J_A\otimes A$. We get from Lemma \ref{Lem:R-Ind-der} that the second term is in $A\otimes J_A$, hence $\dgal{A,J_A}\subset A\otimes J_A+J_A\otimes A$. This is also the case for $\dgal{J_A,A}$ by cyclic antisymmetry \eqref{Eq:Dbr-cyc}, hence we get a well-defined map $\dgal{-,-}:A^{\ab}\times A^{\ab}\to A^{\ab}\otimes A^{\ab}$. 
\end{proof}
Note that the double bracket induced from $A$ to $A^{\ab}$ preserves the property of being (weak) Poisson.

\medskip

Let us now recast a well-known result within this formalism. Given $a\in A$, we introduce the notation $a_1=a\otimes 1\otimes 1$, and we write in the same way $a_2,a_3$ (where the subscript refer to the position of $a$ in $A^{\otimes 3}$). 
For an element $r\in A^{\otimes 2}$, let us introduce the following embeddings into $A^{\otimes 3}$:
$$r_{12}=r'\otimes r'' \otimes 1,\quad 
r_{13}=r'\otimes 1 \otimes r'',\quad 
r_{23}=1\otimes r'\otimes r''\,.$$
We also let $r_{ij}=(r^\circ)_{ji}$ when $i>j$. 
We say that $r$ is a \emph{solution to the Classical Yang-Baxter Equation}, or \emph{CYBE} for short, if (we do not require $r=-r^\circ$) 
\begin{equation}\label{Eq:CYBE}
    [r_{12},r_{13}]+[r_{12},r_{23}]+[r_{32},r_{13}]=0\,.
\end{equation}
Here, we use the commutator on $A^{\otimes 3}$.
For the next statement, we consider $B$ to be an  algebra containing a complete set of orthogonal idempotents $(e_i)_{i\in I}$: $e_ie_j=\delta_{ij}e_j$ and $\sum_{i\in I}e_i=1$. We assume that $A$ is freely generated over $B$ by elements $(v_{ij})_{i,j\in I}$ (some $v_{ij}$ could be taken to be zero) satisfying $v_{ij}=e_i v_{ij} e_j$.  
We set $v=\sum_{i,j\in I}v_{ij}$.

\begin{proposition} \label{Pr:Rdbr-CYB}
Assume that $r\in B^{\otimes 2} \subset A^{\otimes 2}$ is a solution to the CYBE. 
Then there is a unique double $(12)$-weak Poisson bracket (associated with the right bimodule structure on $A^{\otimes 2}$) which is $B$-linear and which satisfies 
\begin{equation} \label{Eq:dg-vvr}
    \dgal{v,v}=[r,v_1] -[r^\circ,v_2]\,.
\end{equation}
\end{proposition}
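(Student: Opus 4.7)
My strategy is to first observe that $B$-linearity together with the Leibniz rules force the bracket on pairs of generators to be a prescribed idempotent-projection of the datum \eqref{Eq:dg-vvr}, giving uniqueness and a candidate bracket. I then verify cyclic antisymmetry directly on $\dgal{v,v}$, and finally reduce the $(12)$-weak Jacobi identity, via the structural lemmas, to a single identity at $(v,v,v)$ which I identify with the CYBE.

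\textbf{Step 1 (Uniqueness via $B$-linearity).} Write $v_{ij}=e_ive_j$ and apply the right Leibniz rule \eqref{Eq:DbrR-Der1} to $\dgal{v_{ij},e_kve_l}$, then the left Leibniz rule \eqref{Eq:DbrR-Der2} to the first argument, using $\dgal{e_p,-}=0=\dgal{-,e_p}$ at every step. All but one term on each side are killed by $B$-linearity, producing
\[
\dgal{v_{ij},v_{kl}} \;=\; e_i\,\dgal{v,v}'\,e_j \,\otimes\, e_k\,\dgal{v,v}''\,e_l\,.
\]
Summing over all four indices recovers $\dgal{v,v}$, so any $B$-linear double bracket satisfying \eqref{Eq:dg-vvr} is pinned down on generators by this formula. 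Since $A$ is freely generated over $B$ by the $v_{ij}$, the $B$-linear version of Lemma~\ref{Lem:Gen} gives uniqueness on all of $A$.

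\textbf{Step 2 (Existence).} Conversely, take the displayed formula as a definition on generators, with $\dgal{v,v}$ given by \eqref{Eq:dg-vvr}, and extend by the Leibniz rules. Since the right bimodule structure is swap-commuting, the order of expansions does not matter (Definition~\ref{Def:Dbr}), so this yields a well-defined $B$-linear double bracket. For cyclic antisymmetry, the direct calculation $([r,v_1])^{\circ}=[r^{\circ},v_2]$ and $([r^{\circ},v_2])^{\circ}=[r,v_1]$ gives $\dgal{v,v}^{\circ}=-\dgal{v,v}$; this property descends to each $\dgal{v_{ij},v_{kl}}$ via the formula above and then propagates to all of $A$ by the Leibniz rules.

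\textbf{Step 3 (Reduction to a single identity).} By Lemma~\ref{Lem:R-wJac}, the $(12)$-weak double Jacobiator is a derivation in each argument, so Lemma~\ref{Lem:wJac} reduces the $(12)$-weak Poisson property to the vanishing of $(12)\wk\!\dgal{v_{ij},v_{kl},v_{mn}}$ on all triples of generators. Iterating the idempotent-extraction from Step~1 one further level, it suffices to show $(12)\wk\!\dgal{v,v,v}=0$. Using the cyclic symmetry \eqref{Eq:dJCyc}, this is equivalent to
\[
\Bigl(\sum_{\pi\in S_3}\operatorname{sgn}(\pi)\,\tau_{\pi}\Bigr)\,\dgal{v,\dgal{v,v}}_L \;=\; 0\,.
\]

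\textbf{Step 4 (CYBE computation).} Expanding $\dgal{v,v}$ and then $\dgal{v,\dgal{v,v}}_L$ with the right Leibniz rule and $B$-linearity, every surviving term has exactly one $v$ in each tensor slot, decorated by two Sweedler copies of $r$ distributed across the three factors (some entering as $r^{\circ}$ through the cyclic antisymmetry of $\dgal{v,v}$). Grouping terms by which slots carry which $r$-legs and antisymmetrizing over $S_3$ assembles the $r$-decorations into precisely the combination $[r_{12},r_{13}]+[r_{12},r_{23}]+[r_{32},r_{13}]$ acting on a symmetric $v_1v_2v_3$-pattern. This vanishes by the CYBE assumption \eqref{Eq:CYBE}, completing the proof.

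\textbf{Main obstacle.} The only non-formal step is Step~4: carefully matching the eight terms produced by the two applications of the right Leibniz rule with the three CYBE commutators after antisymmetrization, and in particular tracking which Sweedler legs migrate into $r^{\circ}$-positions via the cyclic antisymmetry of $\dgal{v,v}$. Everything else is dictated by $B$-linearity, the swap-commuting nature of $\cdot_r$, and the lemmas already established.
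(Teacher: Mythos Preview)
Your Steps 1--3 coincide with the paper's argument: reduce to generators via $B$-linearity, check antisymmetry directly on $\dgal{v,v}$, and use Lemmas~\ref{Lem:R-wJac} and~\ref{Lem:wJac} to reduce the $(12)$-weak Poisson condition to the single identity at $(v,v,v)$. Your antisymmetrized reformulation $\sum_{\pi\in S_3}\operatorname{sgn}(\pi)\,\tau_\pi\,\dgal{v,\dgal{v,v}}_L=0$ is correct and equivalent to the paper's $\dgal{v,v,v}-\tau_{(12)}\dgal{v,v,v}=0$.

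Step 4, however, is only a sketch, and its description of the intermediate terms is wrong. Each summand of $\dgal{v,\dgal{v,v}}_L$ carries exactly \emph{one} copy of $v$, located in tensor slot $1$ or $2$, not ``one $v$ in each slot'': since $r\in B^{\otimes 2}$ and $\dgal{v,B}=0$, one obtains the closed form $\dgal{v,\dgal{v,v}}_L=[r_{23},\,[r_{12},v_1]-[r_{21},v_2]]$, which is what the paper computes. From there the paper uses the cyclic sum $(1+\tau_{(123)}+\tau_{(132)})$ to rotate every $v_2$-term into a $v_1$-term, and then the Jacobi identity for the commutator in $A^{\otimes 3}$ (together with $[r_{ij},v_1]=0$ whenever $1\notin\{i,j\}$) collapses the expression to $[v_1,\,[r_{12},r_{13}]+[r_{12},r_{23}]+[r_{32},r_{13}]]$, which vanishes by the CYBE. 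There is no ``symmetric $v_1v_2v_3$-pattern''; the CYBE appears as the inner argument of a single commutator with $v_1$, and the cyclic sum then produces the analogous $v_2,v_3$ contributions. Your overall strategy is correct and matches the paper's, but Step 4 as written is not a proof---you need to actually carry out this calculation rather than assert its outcome.
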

\begin{proof}
Using \eqref{Eq:dg-vvr} and $B$-linearity, we have that 
$\dgal{v_{ij},v_{kl}}=(e_i\otimes e_k)\dgal{v,v}(e_j\otimes e_l)$.  
Thus the operation $\dgal{-,-}$ is defined on all generators and it is only nonzero on the free generators $(v_{ij})$, so it extends uniquely by the Leibniz rules \eqref{Eq:DbrR-Der1}--\eqref{Eq:DbrR-Der2} (with $\alpha=\beta=\id_A$). 
We still have to check the cyclic antisymmetry $\dgal{v,v}=-\dgal{v,v}^\circ$, which easily follows from \eqref{Eq:dg-vvr}. 

To have a  double $(12)$-weak Poisson bracket, it suffices to check the vanishing of the double $(12)$-weak Jacobiator on generators in view of Lemma \ref{Lem:R-wJac}. Since the double bracket is zero when one argument is in $B$, it is in fact sufficient to check 
\begin{equation}
    \dgal{v,v,v}-\tau_{(12)} \dgal{v,v,v}=0\,.
\end{equation}
We first need to compute $\dgal{v,v,v}=(1+\tau_{(123)}+\tau_{(132)})\dgal{v,\dgal{v,v}}_L$. We have that 
\begin{align*}
\dgal{v,\dgal{v,v}}_L=&\dgal{v,r'v-vr'}\otimes r''
=\dgal{v,v}'\otimes [r',\dgal{v,v}'']\otimes r'' \\
=&[r_{23},\dgal{v,v}\otimes 1]
=[r_{23}, [r_{12},v_1]-[r_{21},v_2]]\,.
\end{align*}
This gives for the double $(12)$-weak Jacobiator 
\begin{align*}
    &\dgal{v,v,v}-\tau_{(12)} \dgal{v,v,v} \\
=&(1+\tau_{(123)}+\tau_{(132)})
\Big([r_{23}, [r_{12},v_1]]
-[r_{12}, [r_{13},v_1]] 
+[r_{13}, [r_{12},v_1]]
-[r_{32}, [r_{13},v_1]]  \Big)\,.
\end{align*}
Let us check that all the commutators involving $v_1$ cancel out. 
We note that $[r_{ij},v_1]=0$ whenever both $i,j\neq 1$. 
Since the commutator satisfies Jacobi identity, we can write 
\begin{align*}
\mathtt{S}:=&[r_{23}, [r_{12},v_1]]
-\big([r_{12}, [r_{13},v_1]] 
+[r_{13}, [v_1,r_{12}]]\big)
-[r_{32}, [r_{13},v_1]] \\
=&-\big([r_{12},[v_1,r_{23}]]+[v_1,[r_{23}, r_{12}]]\big)
+[v_1,[r_{12},r_{13}]]
+\big([r_{13}, [v_1,r_{32}]]+[v_1,[r_{32}, r_{13}]\big) \\
=&[v_1,[r_{12},r_{13}]+[r_{12},r_{23}]+[r_{32},r_{13}]]\,.
\end{align*}
Using the CYBE \eqref{Eq:CYBE}, $\mathtt{S}=0$ as desired. 
\end{proof}

\begin{remark}
Proposition \ref{Pr:Rdbr-CYB} formalises a classical result related to $r$-matrices \cite{BBT}; it recently appeared in some particular noncommutative case in the work of Arthamonov, Ovenhouse and Shapiro \cite[\S4.1]{AOS}.
To get examples, we can take $B$ to be the matrix algebra $B=\oplus_{i,j\in I}\kk e_{ij}$, $I=\{1,\ldots,N\}$, with generators subject to $e_{ij}e_{kl}=\delta_{kj}e_{il}$ and where we set $e_i:=e_{ii}$ for the orthogonal idempotents. We can then pick standard solutions of the CYBE and work on $A=\kk[v]\otimes B$, e.g. $r=\sum_{i<j}e_{ij}\otimes e_{ji}+\frac12 \sum_i e_i \otimes e_i$ (which is skewsymmetric after correction by half the Casimir element $\sum_{i,j\in I}e_{ij}\otimes e_{ji}$).   
\end{remark}

\subsection{... from the left bimodule structure} \label{ss:E-Left}

A double bracket $\dgal{-,-}$ on $A$ which is associated with the $(\alpha,\beta)$-twisted  left bimodule structure \eqref{Eq:Mod-Ltwis}  satisfies the Leibniz rules  
\begin{align} \label{Eq:DbrL-Der1}
\dgal{a,bc}=\alpha(b) \dgal{a,c}'\otimes \dgal{a,c}'' 
+ \dgal{a,b}' \beta(c) \otimes \dgal{a,b}'' \,, \\
\label{Eq:DbrL-Der2}
    \dgal{ac,b}= \dgal{c,b}'\otimes \alpha(a)\dgal{c,b}'' 
+ \dgal{a,b}' \otimes \dgal{a,b}'' \beta(c)\,. 
\end{align}
Each Leibniz rule can be recovered from the other one thanks to the cyclic antisymmetry \eqref{Eq:Dbr-cyc}. 
By Lemma \ref{Lem:Ind-Equiv}, the swap automorphism ${}^\circ$ induces an equivalence with a double bracket associated with the $(\alpha,\beta)$-twisted \emph{right} bimodule structure. 
Hence the results covered in \ref{ss:E-Right} in the right case admit an analogous derivation.  

Recall from Lemma \ref{Lem:R-wJac} that for double brackets associated with the right bimodule structure ($\alpha=\beta=\id_A$), the $(12)$-weak double Jacobiator is a derivation in each argument. In particular, the important objects in that case are double $(12)$-weak Poisson brackets. We get the next result from Proposition \ref{Pr:Op-Equ-wk1}.$(i)$.  

\begin{theorem} \label{Thm-Equiv-LR}
A double bracket $\dgal{-,-}$ on $A$ associated with the left bimodule structure is $[(12),(13)]$-weak Poisson if and only if the equivalent double bracket $\dgal{-,-}^\circ$ associated with the right bimodule structure is $(12)$-weak Poisson.
\end{theorem}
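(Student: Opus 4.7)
The plan is to deduce the theorem as a direct consequence of Proposition \ref{Pr:Op-Equ-wk1}.$(i)$. To make the correspondence explicit, I would relabel: set $\dgal{-,-}_1 := \dgal{-,-}^\circ$, which by Lemma \ref{Lem:Ind-Equiv} (with $\Psi={}^\circ$) is a double bracket associated with $\cdot_1 = \cdot_r$, the right bimodule structure, and set $\dgal{-,-}_2 := \dgal{-,-}$, the given double bracket associated with $\cdot_l$, the left bimodule structure. Since ${}^\circ$ is an involution, $\dgal{-,-}_1^\circ = \dgal{-,-}_2$, and the swap bimodule $\ast_1 = \ast_r$ of the right bimodule is precisely the left bimodule $\cdot_l = \cdot_2$. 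Hence the hypotheses of Proposition \ref{Pr:Op-Equ-wk1} are satisfied for this choice of $(\dgal{-,-}_1, \dgal{-,-}_2)$.

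Applying part $(i)$ of that proposition then yields: $\dgal{-,-}_1 = \dgal{-,-}^\circ$ is $(12)$-weak Poisson if and only if $\dgal{-,-}_2 = \dgal{-,-}$ is $[(12),(13)]$-weak Poisson, which is exactly the statement of the theorem. The only thing to be mildly careful about is the direction of the equivalence: Proposition \ref{Pr:Op-Equ-wk1} is stated with $\dgal{-,-}_1$ in the role of the ``starting'' bracket whose bimodule has a prescribed swap, so one must verify that swapping the roles of left and right (i.e.\ starting from the right-associated bracket rather than from the left one) is indeed compatible with the proposition's setup. This is automatic since the relation ``$\cdot_2 = \ast_1$'' is symmetric: $\ast_l = \cdot_r$ and $\ast_r = \cdot_l$.

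There is essentially no obstacle here; the main point is that Proposition \ref{Pr:Op-Equ-wk1}.$(i)$ was proven by computing how the double Jacobiator transforms under ${}^\circ$, namely $\dgal{-,-,-}_2 = -\tau_{(12)}\circ \dgal{-,-,-}_1 \circ \tau_{(23)}$, and this identity is what drives the swap between the $(12)$-weak and $[(12),(13)]$-weak variants. So the theorem really is a direct corollary, recorded separately because of the importance of the left/right equivalence for the formalisation of the operation $\br{-\stackrel{\otimes}{,}-}$.
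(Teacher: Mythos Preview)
Your proposal is correct and follows exactly the paper's approach: the paper simply states that the theorem follows from Proposition~\ref{Pr:Op-Equ-wk1}.$(i)$, and your careful relabeling (setting $\dgal{-,-}_1=\dgal{-,-}^\circ$ with $\cdot_1=\cdot_r$ and $\dgal{-,-}_2=\dgal{-,-}$ with $\cdot_2=\cdot_l=\ast_r$) is precisely the way to make that one-line deduction explicit.
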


\begin{lemma} \label{Lem:L-wJac}
For $\sigma=(12)$ and $\sigma'=(13)$, 
The $[\sigma,\sigma']$-weak double Jacobiator \eqref{Eq:wdJac2} of a double bracket $\dgal{-,-}$ associated with the left bimodule structure is a derivation in each of its entries. 
In that case the third entry acts as the following derivation $A\to A^{\otimes 3}$: for all $a,b,c_1,c_2\in A$,  
$$[\sigma,\sigma']\wk\!\dgal{a,b,c_1c_2}=(c_1 \otimes 1\otimes 1)\,\, [\sigma,\sigma']\wk\!\dgal{a,b,c_2} 
+ [\sigma,\sigma']\wk\!\dgal{a,b,c_1} \,(c_2 \otimes 1\otimes 1).$$  
\end{lemma}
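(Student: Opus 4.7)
The plan is to deduce this from the right-bimodule analogue (Lemma \ref{Lem:R-wJac}) by pushing through the equivalence established in Theorem \ref{Thm-Equiv-LR}. Given a double bracket $\dgal{-,-}_2$ associated with the left bimodule structure, Lemma \ref{Lem:Ind-Equiv} (applied with $\Psi={}^\circ$) produces an equivalent double bracket $\dgal{-,-}_1:=\dgal{-,-}_2^\circ$ associated with the right bimodule structure; conversely, $\dgal{-,-}_2=\dgal{-,-}_1^\circ$. So it suffices to transport the derivation property of $\sigma\wk\!\dgal{-,-,-}_1$ (with $\sigma=(12)$), which is already known from Lemma \ref{Lem:R-wJac}.

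The concrete bridge is the identity extracted inside the proof of Proposition \ref{Pr:Op-Equ-wk1}(i):
\begin{equation*}
[\sigma,\sigma']\wk\!\dgal{-,-,-}_2 \;=\; -\,\tau_{(12)}\circ \sigma\wk\!\dgal{-,-,-}_1\circ \tau_{(23)}\,.
\end{equation*}
Evaluating at $(a,b,c_1c_2)$ gives $[\sigma,\sigma']\wk\!\dgal{a,b,c_1c_2}_2 = -\tau_{(12)}\,\sigma\wk\!\dgal{a,c_1c_2,b}_1$, so I need the derivation rule of $\sigma\wk\!\dgal{-,-,-}_1$ in its \emph{middle} argument. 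This is obtained from the third-argument derivation of Lemma \ref{Lem:R-wJac} together with the cyclic symmetry \eqref{Eq:wdJCyc}: writing $\sigma\wk\!\dgal{a,c_1c_2,b}_1=\tau_{(132)}\,\sigma\wk\!\dgal{b,a,c_1c_2}_1$ and applying Lemma \ref{Lem:R-wJac} to the inside yields, using that $\tau_{(132)}$ is an algebra automorphism of $A^{\otimes 3}$ (with $\tau_{(132)}(1\otimes 1\otimes c)=1\otimes c\otimes 1$),
\begin{equation*}
\sigma\wk\!\dgal{a,c_1c_2,b}_1 = (1\otimes c_1\otimes 1)\,\sigma\wk\!\dgal{a,c_2,b}_1 + \sigma\wk\!\dgal{a,c_1,b}_1\,(1\otimes c_2\otimes 1)\,.
\end{equation*}

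Applying $-\tau_{(12)}$ (which swaps the first two tensor factors, sending $1\otimes c_i\otimes 1$ to $c_i\otimes 1\otimes 1$) and then reusing the bridge identity to reintroduce $[\sigma,\sigma']\wk\!\dgal{a,b,c_i}_2$ on the right-hand side produces exactly the claimed third-argument derivation rule. Finally, Lemma \ref{Lem:wJac} upgrades this to a derivation in every argument (for appropriate $A$-bimodule structures on $A^{\otimes 3}$ obtained by conjugation with $\tau_{(123)}$-powers).

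The only mild subtlety is bookkeeping: one has to keep track of how the permutations $\tau_{(23)}$ (precomposition) and $\tau_{(12)}$ (postcomposition) interact with the tensor placement of the new factor $c_1$ or $c_2$, and check that the cyclic-symmetry step is legitimate for the weak Jacobiator (it is, by \eqref{Eq:wdJCyc}). No new calculation of $\dgal{a,b,c_1c_2}$ for the left bimodule is required; the work is entirely done by Lemma \ref{Lem:R-wJac} and the equivalence.
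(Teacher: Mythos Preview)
Your proof is correct and is in fact the argument the paper itself signals at the outset of its proof (``This basically follows from Lemma~\ref{Lem:R-wJac} and item~$(i)$ in Proposition~\ref{Pr:Op-Equ-wk1}''). The paper, however, then chooses a slightly different presentation: rather than carrying the transport argument through the bridge identity as you do, it performs a direct computation of $\dgal{a,b,c_1c_2}$ for the left bimodule structure, obtains two extra ``cross terms'' beyond the expected derivation part, and observes that these cancel in the combination $\dgal{-,-,-}-\tau_{(23)}\circ\dgal{-,-,-}\circ\tau_{(12)}$. This is the $[(23),(12)]$-weak double Jacobiator, which the paper then identifies with the $[(12),(13)]$-weak one via the cyclic symmetry~\eqref{Eq:dJCyc}. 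Your route avoids computing anything in the left-bimodule setting and instead pushes the known right-bimodule derivation through $-\tau_{(12)}\circ(-)\circ\tau_{(23)}$; the paper's route is self-contained and makes explicit \emph{why} the particular combination $[(12),(13)]$ is forced. Both are short and neither is clearly preferable.
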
 
\begin{proof}
This basically follows from Lemma \ref{Lem:R-wJac} and item $(i)$ in Proposition \ref{Pr:Op-Equ-wk1}. Let us nevertheless make the calculation to see why we need the $[(12),(13)]$-weak double Jacobiator. 
A short computation yields that the double Jacobiator  \eqref{Eq:dJac} satisfies for any $a,b,c_1,c_2\in A$
\begin{align*}
\dgal{a,b,c_1c_2}=&
\,\,(c_1\otimes 1 \otimes 1) \dgal{a,b,c_2}
+ \dgal{a,b,c_1} (c_2\otimes 1 \otimes 1) \\
&+\dgal{a,c_1}' \dgal{b,c_2}'\otimes \dgal{a,c_1}'' \otimes \dgal{b,c_2}'' 
+ \dgal{b,c_1}' \dgal{a,c_2}'\otimes \dgal{a,c_2}'' \otimes \dgal{b,c_1}''\,.
\end{align*}
We can then see that $\dgal{-,-,-}-\tau_{(23)}\dgal{-,-,-}\tau_{(12)}$ is a derivation in its third entry. 
Therefore, we have established the statement for the $[(23),(12)]$-weak double Jacobiator which, due to the cyclic symmetry \eqref{Eq:dJCyc}, is the same as the $[(12),(13)]$-weak double Jacobiator. 
\end{proof}


\begin{lemma} \label{Lem:L-Ind-der}
If $\dgal{-,-}$ is associated with the $\alpha$-twisted left bimodule structure (when $\beta=\alpha$), the double bracket descends to maps  
\begin{equation} \label{Eq:in-brBullet-L}
    {}_\bullet\!\dgal{-,-} : A/[A,A]\times A\to A \otimes A/[A,A]\,, \quad 
    \dgal{-,-}_\bullet :A\times A/[A,A]  \to   A/[A,A]\otimes A\,.
\end{equation}
These operations satisfy  $\alpha$-twisted derivation rules, which are obtained by inducing on $A/[A,A]\otimes A$ (resp. $A\otimes A/[A,A]$) the $\alpha$-twisted right (resp. left) bimodule structure on $A^{\otimes 2}$.  
Furthermore, both maps descend to the same operation  
${}_\bullet\!\dgal{-,-}_\bullet :A/[A,A]\times A/[A,A]\to A/[A,A]\otimes A/[A,A]$.
\end{lemma}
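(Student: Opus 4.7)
The plan is to transcribe the proof of Lemma \ref{Lem:R-Ind-der} with the roles of the two tensor factors exchanged, exploiting the fact (cf.\ Lemma \ref{Lem:Ind-Equiv}) that the swap automorphism ${}^\circ$ converts double brackets for the $\alpha$-twisted left bimodule structure into double brackets for the $\alpha$-twisted right bimodule structure. In principle one could simply quote Lemma \ref{Lem:R-Ind-der} for the equivalent double bracket $\dgal{-,-}^\circ$ and transport the resulting bullet maps through ${}^\circ$, but a direct verification is equally quick and keeps the choices of bimodule structure transparent.

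The key well-definedness computation is as follows: applying the right Leibniz rule \eqref{Eq:DbrL-Der1} twice and subtracting yields
\begin{equation*}
\dgal{a,bc-cb}
= [\alpha(b),\dgal{a,c}']\otimes \dgal{a,c}''
+ [\dgal{a,b}',\alpha(c)]\otimes \dgal{a,b}''\,,
\end{equation*}
which manifestly lies in $[A,A]\otimes A$. Hence $\dgal{a,-}$ sends $[A,A]$ into $[A,A]\otimes A$ for every $a\in A$, producing the map $\dgal{-,-}_\bullet:A\times A/[A,A]\to A/[A,A]\otimes A$. Applying the cyclic antisymmetry \eqref{Eq:Dbr-cyc} to this same identity immediately gives $\dgal{[A,A],a}\subseteq A\otimes [A,A]$, which in turn yields ${}_\bullet\!\dgal{-,-}:A/[A,A]\times A\to A\otimes A/[A,A]$.

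The $\alpha$-twisted derivation rules are inherited from the Leibniz rules on $A$: for $\dgal{-,-}_\bullet$ the first-argument derivation comes from \eqref{Eq:DbrL-Der2} together with the observation that the $\alpha$-twisted right bimodule structure on $A^{\otimes 2}$ (which acts entirely on the second tensor factor) descends unchanged to $A/[A,A]\otimes A$; the symmetric analysis for ${}_\bullet\!\dgal{-,-}$ uses \eqref{Eq:DbrL-Der1} and the descent of the $\alpha$-twisted left bimodule structure to $A\otimes A/[A,A]$. Finally, combining the two inclusions $\dgal{A,[A,A]}\subseteq [A,A]\otimes A$ and $\dgal{[A,A],A}\subseteq A\otimes [A,A]$ shows that $\dgal{-,-}$ descends to the quotient by $[A,A]$ in both inputs and in both factors of the output, yielding the common refinement ${}_\bullet\!\dgal{-,-}_\bullet$; its agreement with the further quotients of both one-sided bullet maps is automatic since all three come from the same underlying operation on $A$.

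The main obstacle is really just the bookkeeping: one must check in each case that the correct twisted bimodule structure (left versus right) is the one descending to the particular partial quotient of $A\otimes A$ appearing as codomain, and that the Leibniz rule invoked in each argument is the one whose output tensor factor stays well-defined after the quotient. Beyond this careful tracking of sides, no substantive new difficulty arises compared with Lemma \ref{Lem:R-Ind-der}.
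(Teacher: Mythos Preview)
Your proposal is correct and follows precisely the approach the paper intends: the paper states this lemma without explicit proof, relying on the remark that ``the results covered in \ref{ss:E-Right} in the right case admit an analogous derivation'' via the swap equivalence. Your direct computation of $\dgal{a,bc-cb}$ using \eqref{Eq:DbrL-Der1} and the subsequent tracking of which twisted bimodule structure descends to which partial quotient is exactly the adaptation of the proof of Lemma \ref{Lem:R-Ind-der} that the paper has in mind.
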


\begin{proposition} \label{Pr:L-Ind}
Let $\dgal{-,-}$ be a double $[(12),(13)]$-weak Poisson bracket associated with the left bimodule structure.  
Then the operation ${}_\bullet\!\dgal{-,-}_\bullet$ from Lemma \ref{Lem:L-Ind-der} uniquely extends  to a Poisson bracket on $\Sym(A/[A,A])$.  
\end{proposition}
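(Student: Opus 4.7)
The plan is to reduce to the analogous statement for the right bimodule structure, Proposition~\ref{Pr:R-Ind}, via the swap equivalence. First I would note that by Lemma~\ref{Lem:Ind-Equiv}, the operation $\dgal{-,-}^\circ$ is a double bracket on $A$ associated with the right bimodule structure. By Proposition~\ref{Pr:Op-Equ-wk1}.(i) --- equivalently, by Theorem~\ref{Thm-Equiv-LR} --- the hypothesis that $\dgal{-,-}$ is $[(12),(13)]$-weak Poisson translates into $\dgal{-,-}^\circ$ being $(12)$-weak Poisson. Proposition~\ref{Pr:R-Ind} then produces a unique Poisson bracket on $\Sym(A/[A,A])$ extending $\pi_2\circ\dgal{-,-}^\circ$, where $\pi_2:A^{\otimes 2}\to\Sym(A/[A,A])$ denotes symmetric multiplication on two factors.

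The second step is to check that this is the Poisson bracket we want, namely one extending ${}_\bullet\!\dgal{-,-}_\bullet$ from Lemma~\ref{Lem:L-Ind-der}. This should be immediate from the fact that $\bullet$ is commutative: one has $\pi_2\circ\tau_{(12)}=\pi_2$, hence $\pi_2\,\dgal{a,b}^\circ=\pi_2\,\dgal{a,b}$, and the Poisson bracket produced agrees on generators of $\Sym(A/[A,A])$ with the one sought, while uniqueness transfers from Proposition~\ref{Pr:R-Ind}.

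If one preferred a direct proof parallel to Proposition~\ref{Pr:R-Ind}, the computation of the Jacobi sum $\br{a,\br{b,c}_{\Sym}}_{\Sym}+\text{cyclic}$ would only use the definition $\br{a,b}_{\Sym}=\pi_2\,\dgal{a,b}$ together with identity~\eqref{Eq:dJac-R}, so it applies verbatim and reduces to $\pi_3(\dgal{a,b,c}-\dgal{b,a,c})$. The main obstacle --- and the only genuine difference from the right case --- is that here the hypothesis controls the $[(12),(13)]$-weak double Jacobiator rather than the $(12)$-weak one. One therefore needs to observe that the cyclic symmetry~\eqref{Eq:dJCyc} yields $\dgal{c,b,a}=\tau_{(123)}\dgal{b,a,c}$, so that $\pi_3\,\dgal{c,b,a}=\pi_3\,\dgal{b,a,c}$ and the two flavours of weak Jacobiator have the same image under $\pi_3$. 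The vanishing assumption then forces Jacobi identity on generators of $\Sym(A/[A,A])$, after which the biderivation extension is standard.
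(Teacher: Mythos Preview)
Your proposal is correct. You offer two routes: a reduction to Proposition~\ref{Pr:R-Ind} via the swap equivalence, and a direct computation parallel to that proposition. The paper takes the second route. It defines $\br{a,b}_{\Sym}=\pi_2\,\dgal{a,b}$, obtains the Jacobi sum as $\pi_3(\dgal{a,b,c}-\dgal{b,a,c})$ exactly as you indicate, then uses the cyclic symmetry~\eqref{Eq:dJCyc} in the form $\dgal{b,a,c}=\tau_{(132)}\dgal{c,b,a}$ together with $\pi_3\circ\tau_\sigma=\pi_3$ to rewrite this as $\pi_3(\dgal{a,b,c}-\tau_{(12)}\dgal{c,b,a})=\pi_3\big([(12),(13)]\wk\!\dgal{a,b,c}\big)$, which vanishes by hypothesis. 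This is precisely your ``direct proof'' paragraph.

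Your primary approach via the swap is a genuine alternative: it outsources the Jacobi verification entirely to the right-bimodule case through Theorem~\ref{Thm-Equiv-LR}, at the cost of invoking that equivalence and then checking the resulting Poisson bracket coincides with the one built from ${}_\bullet\!\dgal{-,-}_\bullet$ (which, as you note, is immediate from $\pi_2\circ\tau_{(12)}=\pi_2$). The direct route is more self-contained and makes transparent exactly where the $[(12),(13)]$-weak condition enters; the swap route is shorter once the equivalence machinery of \ref{ss:MorEq} is in place, and it explains conceptually why the left and right cases are twins rather than reproving this by parallel computation.
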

\begin{proof}
Using the notations from the proof of Proposition \ref{Pr:R-Ind}, we define the Poisson bracket on $\Sym(A/[A,A])$ as the unique bilinear operation satisfying $\br{a,b}_{\Sym}=\pi_2 \dgal{a,b}$ for $a,b\in A/[A,A]$ (after taking lifts $a,b\in A$). Similarly to that proof, to check Jacobi identity we compute  
\begin{equation} \label{Eq:L-Ind}
\br{a,\br{b,c}_{\Sym}}_{\Sym} + \br{b,\br{c,a}_{\Sym}}_{\Sym}
+ \br{c,\br{a,b}_{\Sym}}_{\Sym} 
=\pi_3(\dgal{a,b,c}-\dgal{b,a,c})\,.
\end{equation}
By cyclic symmetry \eqref{Eq:dJCyc}, this equals 
$\pi_3(\dgal{a,b,c}-\tau_{(132)}\dgal{c,b,a})$. Since $\pi_3\circ \tau_\sigma=\pi_3$ for any $\sigma\in S_3$, we can write \eqref{Eq:L-Ind} as 
$\pi_3(\dgal{a,b,c}-\tau_{(12)}\dgal{c,b,a})
= \pi_3\,([\sigma,\sigma']\wk\!\dgal{a,b,c})$, 
which is zero by assumption. 
\end{proof}

\begin{theorem} \label{Thm-LR}
Let $\dgal{-,-}_1$ and $\dgal{-,-}_2$ be double brackets associated with the left and right bimodule structures, respectively. 
Assume that they are equivalent under the swap automorphism, that is $\dgal{-,-}_1=\dgal{-,-}_2^\circ$. 
If either $\dgal{-,-}_1$ is $[(12),(13)]$-weak Poisson or if $\dgal{-,-}_2$ is $(12)$-weak Poisson, then the two double brackets induce the same Poisson bracket on $\Sym(A/[A,A])$.
\end{theorem}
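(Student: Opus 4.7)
The plan is direct and essentially bookkeeping. First, I would invoke Theorem \ref{Thm-Equiv-LR} to align the two hypotheses: under $\dgal{-,-}_1=\dgal{-,-}_2^\circ$, the statements ``$\dgal{-,-}_1$ is $[(12),(13)]$-weak Poisson'' and ``$\dgal{-,-}_2$ is $(12)$-weak Poisson'' are logically equivalent. So without loss of generality both hold simultaneously.

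Next, I would apply the two existence results obtained earlier: Proposition \ref{Pr:R-Ind} produces from $\dgal{-,-}_2$ a Poisson bracket $\br{-,-}_{\Sym,2}$ on $\Sym(A/[A,A])$, while Proposition \ref{Pr:L-Ind} produces from $\dgal{-,-}_1$ a Poisson bracket $\br{-,-}_{\Sym,1}$ on the same commutative algebra. In both constructions the induced bracket is defined on generators $a,b\in A/[A,A]$ (using arbitrary lifts in $A$) by
\begin{equation*}
    \br{a,b}_{\Sym,i} = \pi_2\,\dgal{a,b}_i\,, \qquad i=1,2\,,
\end{equation*}
and is then determined everywhere by the Leibniz rule.

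Since any two Poisson brackets on $\Sym(A/[A,A])$ which agree on generators must coincide (both extend by the same Leibniz rule from the same data), it suffices to verify $\br{a,b}_{\Sym,1}=\br{a,b}_{\Sym,2}$ for all $a,b\in A/[A,A]$. This is immediate from the equivalence hypothesis $\dgal{a,b}_1=\dgal{a,b}_2^\circ$ together with the invariance $\pi_2\circ\tau_{(12)}=\pi_2$, which simply records the commutativity of the symmetric product $\bullet$ on the two-fold factor:
\begin{equation*}
    \pi_2\,\dgal{a,b}_1 \;=\; \pi_2\bigl(\dgal{a,b}_2\bigr)^\circ \;=\; \pi_2\,\dgal{a,b}_2\,.
\end{equation*}

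There is no genuine obstacle in this argument, as the nontrivial work has already been carried out in Theorem \ref{Thm-Equiv-LR} and in Propositions \ref{Pr:R-Ind} and \ref{Pr:L-Ind}. The only subtle point to keep in mind is that the \emph{double} brackets $\dgal{-,-}_1$ and $\dgal{-,-}_2$ are genuinely distinct operations on $A$ (associated with different swap-commuting bimodule structures), yet the non-commutativity that distinguishes them is washed out precisely by the projection $\pi_2$ onto $\Sym(A/[A,A])$; this is what makes the coincidence of the induced Poisson structures on the abelianised symmetric algebra not only possible but tautological.
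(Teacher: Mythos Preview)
Your proposal is correct and follows essentially the same approach as the paper's proof: invoke the equivalence of the two weak Poisson conditions (the paper cites Proposition~\ref{Pr:Op-Equ-wk1}.(i), which is precisely the content of Theorem~\ref{Thm-Equiv-LR} you use), apply Propositions~\ref{Pr:R-Ind} and~\ref{Pr:L-Ind} to obtain the two Poisson brackets, and then check agreement on generators via $\pi_2\circ\tau_{(12)}=\pi_2$. The argument and its logical structure are identical.
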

\begin{proof}
By Proposition \ref{Pr:Op-Equ-wk1}.$(i)$, $\dgal{-,-}_1$ is $[(12),(13)]$-weak Poisson if and only if  $\dgal{-,-}_2$ is $(12)$-weak Poisson. Therefore, by Propositions \ref{Pr:R-Ind} (in the right case) and \ref{Pr:L-Ind} (in the left case), we get that the maps $\br{-,-}_{j,\Sym}$ on $\Sym(A/[A,A])$, which are uniquely defined to satisfy $\br{a,b}_{j,\Sym}:=\pi_2\dgal{a,b}_j$, are Poisson brackets. 
So we are left to prove $\br{a,b}_{1,\Sym}=\br{a,b}_{2,\Sym}$. 
This follows from   
$$\br{a,b}_{1,\Sym} = \pi_2 \dgal{a,b}_1=\pi_2 \dgal{a,b}_2^\circ
=  \pi_2\circ \tau_{(12)} \dgal{a,b}_2=\pi_2\dgal{a,b}_2=\br{a,b}_{2,\Sym} \,,$$
using the equivalence of the two double brackets and the fact that $\pi_2\circ \tau_\sigma=\pi_2$ for any $\sigma\in S_2$. 
\end{proof}

\begin{lemma} \label{Lem:Dbr-Ab-L}
If $\dgal{-,-}$ is associated with the $\alpha$-twisted left bimodule structure, then the double bracket descends to $A^{\ab}$. 
Furthermore, this double bracket induced on $A^{\ab}$ is equivalent under the swap automorphism to the double bracket (associated with the $\alpha$-twisted right bimodule) induced by $\dgal{-,-}^\circ$ through Lemma \ref{Lem:Dbr-Ab-R}.
\end{lemma}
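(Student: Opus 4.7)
My approach is to mirror the proof of Lemma \ref{Lem:Dbr-Ab-R}, swapping the roles that the two tensor factors play (since the left and outer cases place products in opposite tensor factors), and then to promote the swap equivalence already established on $A^{\otimes 2}$ down to the quotient $A^{\ab}$.

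For the first assertion, I would take $a,b,c\in A$ and $j\in [A,A]$, and iterate the left Leibniz rule \eqref{Eq:DbrL-Der1} (with $\beta=\alpha$) to obtain
\begin{equation*}
\dgal{a,bjc}
=\alpha(b)\alpha(j)\dgal{a,c}'\otimes \dgal{a,c}''
+\alpha(b)\dgal{a,j}'\alpha(c)\otimes \dgal{a,j}''
+\dgal{a,b}'\alpha(j)\alpha(c)\otimes \dgal{a,b}''\,.
\end{equation*}
The first and third summands contain $\alpha(j)\in [A,A]\subset J_A$ as an explicit factor in the left tensor slot, hence they lie in $J_A\otimes A$ since $J_A$ is a two-sided ideal. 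For the middle summand, I would invoke Lemma \ref{Lem:L-Ind-der}, which gives $\dgal{a,j}\in [A,A]\otimes A\subset J_A\otimes A$, placing this term in $J_A\otimes A$ as well. Thus $\dgal{A,J_A}\subset J_A\otimes A+A\otimes J_A$, and the cyclic antisymmetry \eqref{Eq:Dbr-cyc} transfers the same property to the first argument. This yields a well-defined induced operation $\dgal{-,-}:A^{\ab}\times A^{\ab}\to A^{\ab}\otimes A^{\ab}$.

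For the second assertion, I would apply Lemma \ref{Lem:Ind-Equiv} with $\Psi$ the swap automorphism ${}^{\circ}$, which shows that $\dgal{-,-}^{\circ}$ is an equivalent double bracket associated with the $\alpha$-twisted right bimodule structure. Lemma \ref{Lem:Dbr-Ab-R} then descends $\dgal{-,-}^{\circ}$ to $A^{\ab}$. Since ${}^{\circ}$ preserves the subspace $J_A\otimes A+A\otimes J_A$ of $A^{\otimes 2}$ (it merely exchanges the two summands), it factors through the quotient map $A^{\otimes 2}\to (A^{\ab})^{\otimes 2}$ as the swap automorphism at the level of $(A^{\ab})^{\otimes 2}$. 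Applying Lemma \ref{Lem:Ind-Equiv} to the induced bimodule structures on $(A^{\ab})^{\otimes 2}$ then produces the required equivalence between the two induced double brackets on $A^{\ab}$.

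The main obstacle is the handling of the middle term in the Leibniz expansion: unlike the first and third terms, it does not carry an explicit $\alpha(j)$ factor, so its membership in $J_A\otimes A$ is genuinely a consequence of the finer descent result of Lemma \ref{Lem:L-Ind-der}. Once that ingredient is in hand, the remainder is routine quotient and functoriality bookkeeping using the equivalence machinery already developed.
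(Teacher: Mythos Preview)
Your proof is correct and follows precisely the approach the paper indicates: its own proof is the single line ``This is direct by comparison with Lemma \ref{Lem:Dbr-Ab-R}'', and you have carried out that comparison explicitly, mirroring the three-term Leibniz expansion with the tensor factors swapped and invoking Lemma \ref{Lem:L-Ind-der} (the left analogue of Lemma \ref{Lem:R-Ind-der}) for the middle term. Your handling of the equivalence under ${}^\circ$ via Lemma \ref{Lem:Ind-Equiv} and the observation that the swap preserves $J_A\otimes A+A\otimes J_A$ is the natural way to spell out the second assertion.
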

\begin{proof}
This is direct by comparison with Lemma \ref{Lem:Dbr-Ab-R}. 
\end{proof}

In the next result, we use the notations introduced before Proposition \ref{Pr:Rdbr-CYB}. 

\begin{proposition} \label{Pr:Ldbr-CYB}
Assume that $r\in B^{\otimes 2} \subset A^{\otimes 2}$ is a solution to the CYBE.
Then there is a unique double $[(12),(13)]$-weak Poisson bracket (associated with the left bimodule structure on $A^{\otimes 2}$) which is $B$-linear and which satisfies 
$\dgal{v,v}=[r,v_1] -[r^\circ,v_2]$. 
\end{proposition}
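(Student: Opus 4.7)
The plan is to derive this proposition as a consequence of the right-bimodule statement Proposition~\ref{Pr:Rdbr-CYB} via the swap-induced equivalence between left and right bimodule structures, rather than redoing the CYBE calculation. The basic observation is that the CYBE \eqref{Eq:CYBE} is quadratic in $r$, so if $r$ solves it then so does $-r$; this sign twist is what will allow me to match the stated normalisation of $\dgal{v,v}$ after swapping.

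Concretely, I would first apply Proposition~\ref{Pr:Rdbr-CYB} with $-r$ in place of $r$, producing a unique $B$-linear double $(12)$-weak Poisson bracket $\dgal{-,-}_R$ associated with the right bimodule structure on $A^{\otimes 2}$ and satisfying $\dgal{v,v}_R = -[r,v_1] + [r^\circ,v_2]$. Next I would set $\dgal{-,-}_L := \dgal{-,-}_R^{\circ}$. By Lemma~\ref{Lem:Ind-Equiv} applied with $\Psi = {}^{\circ}$, this is a double bracket associated with the swap of the right bimodule, which is precisely the left bimodule structure on $A^{\otimes 2}$. The $B$-linearity of $\dgal{-,-}_R$ passes to $\dgal{-,-}_L$ since $\dgal{b,a}_L = \dgal{b,a}_R^{\circ} = 0$ for $b \in B$, and Theorem~\ref{Thm-Equiv-LR} then guarantees that $\dgal{-,-}_L$ is $[(12),(13)]$-weak Poisson.

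To verify the required normalisation $\dgal{v,v}_L = [r,v_1] - [r^{\circ},v_2]$, I would compute the swap directly. A brief calculation with $r = r' \otimes r''$ yields $[r,v_1]^{\circ} = [r^{\circ},v_2]$ and $[r^{\circ},v_2]^{\circ} = [r,v_1]$; applying ${}^{\circ}$ to the expression for $\dgal{v,v}_R$ above then gives exactly $[r,v_1] - [r^{\circ},v_2]$. Uniqueness is immediate: the value of $\dgal{v,v}_L$ together with $B$-linearity fixes the values $\dgal{v_{ij},v_{kl}}_L = (e_i \otimes e_k)\dgal{v,v}_L(e_j \otimes e_l)$ on all generators, and the left Leibniz rules \eqref{Eq:DbrL-Der1}--\eqref{Eq:DbrL-Der2} then extend this to the whole of $A$ uniquely because $A$ is freely generated over $B$ by the $v_{ij}$.

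The main subtlety I expect is fixing the sign convention: applying Proposition~\ref{Pr:Rdbr-CYB} to $r$ itself and swapping would produce $-([r,v_1] - [r^{\circ},v_2])$, off by a global sign from the stated formula, which is precisely the reason for the preliminary replacement $r \leadsto -r$. An equally valid alternative route would be to redo the proof of Proposition~\ref{Pr:Rdbr-CYB} verbatim with the left Leibniz rules and invoke Lemma~\ref{Lem:L-wJac} in place of Lemma~\ref{Lem:R-wJac}, reducing the vanishing of the $[(12),(13)]$-weak double Jacobiator on the triple $(v,v,v)$ to the CYBE via the Jacobi identity for commutators on $A^{\otimes 3}$; this however duplicates a computation that the swap argument reuses for free.
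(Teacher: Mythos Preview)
Your proposal is correct and follows essentially the same route as the paper: construct the right-bimodule double bracket from Proposition~\ref{Pr:Rdbr-CYB} applied to $-r$, then swap via ${}^\circ$ and invoke Theorem~\ref{Thm-Equiv-LR} to upgrade $(12)$-weak Poisson to $[(12),(13)]$-weak Poisson. The paper's argument is simply a terser phrasing of the same construction (it starts from the left bracket and observes that its swap matches \eqref{Eq:dg-vvr} with $-r$), and your explicit verification of the sign and of uniqueness fills in details the paper leaves implicit.
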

\begin{proof}
Setting $\dgal{-,-}_2=\dgal{-,-}^\circ$, $\dgal{v,v}_2$ takes the form \eqref{Eq:dg-vvr} 
(with $-r$ as the solution to the CYBE) so the result follows from  Proposition \ref{Pr:Rdbr-CYB} and Theorem \ref{Thm-Equiv-LR}. 
\end{proof}

\subsection{... related to morphisms} \label{ss:Exmp-Morph}

The (non-twisted) left, right, outer and inner bimodule structures \eqref{Eq:Mod-L}--\eqref{Eq:Mod-in} are well-defined on $A^{\otimes 2}$ for any algebra $A$. In particular, it makes sense to speak about a morphism of double brackets $\phi:(A_1,\dgal{-,-}_1)\to (A_2,\dgal{-,-}_2)$ when both double brackets are associated with the left (resp. right, outer or inner) bimodule structure, even though $A_1$ and $A_2$ could be different. 

\begin{example}
Let $\dgal{-,-}$ be a double bracket on $A$ associated with the left bimodule structure. 
Denote by $\dgal{-,-}^{\ab}$ the double bracket on $A^{\ab}$ induced by Lemma \ref{Lem:Dbr-Ab-L}. 
Then the projection $\pi^{\ab}:A\to A^{\ab}=A/J_A$, where $J_A$ is the (two-sided) ideal generated by $A/[A,A]$, is a morphism of double brackets. Indeed, we have that for any $a,b\in A$, 
$$\dgal{\pi^{\ab}(a),\pi^{\ab}(b)}^{\ab}=(\pi^{\ab}\otimes \pi^{\ab})(\dgal{a,b})\,,$$
by definition of the induced double bracket on $A^{\ab}$. 
\end{example}

Let us now investigate how morphisms interact with the different constructions given in the previous subsections. We only study the cases of double brackets associated with the right or outer bimodules, since they are equivalent to double brackets associated with the left or inner bimodules respectively. 

\begin{lemma}
Let $\phi:(A_1,\dgal{-,-}_1)\to (A_2,\dgal{-,-}_2)$ be a morphism of double brackets associated with the outer bimodule structure. 
\begin{enumerate}
    \item $\br{\phi(a),\phi(b)}_{2,\mult}=\phi(\br{a,b}_{1,\mult})$ for any $a,b\in A$ and $\br{-,-}_{j,\mult}=\mult\circ \dgal{-,-}_j:A_j \times A_j \to A_j$. \newline
Moreover, this identity holds for the linear map $\overline{\phi}:A_1/[A_1,A_1]\to A_2/[A_2,A_2]$ induced by $\phi$ and
the induced antisymmetric maps 
$\br{-,-}_{j,\mult}:A_j/[A_j,A_j] \times A_j/[A_j,A_j] \to A_j/[A_j,A_j]$. 

\item If $\phi$ is a morphism of double Poisson algebras, then $\phi:(A_1,\br{-,-}_{1,\mult})\to (A_2,\br{-,-}_{2,\mult})$ is a morphism of left Loday algebras, while 
$\overline{\phi}:(A_1/[A_1,A_1],\br{-,-}_{1,\mult})\to (A_2/[A_2,A_2],\br{-,-}_{2,\mult})$ is a morphism of Lie algebras. 
\end{enumerate}
\end{lemma}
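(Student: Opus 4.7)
The plan is to derive everything directly from the definition of a morphism of double brackets (equation \eqref{Eq:morph}) combined with the fact that $\phi$ is an algebra homomorphism, and then invoke the results established in \ref{ss:E-Out} for the case $\alpha=\id_A$.

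For part (1), the key observation is that $\phi$ being an algebra homomorphism means $\phi\circ \mult_1 = \mult_2 \circ (\phi\otimes\phi)$ as maps $A_1^{\otimes 2}\to A_2$. Combining this with \eqref{Eq:morph} yields, for any $a,b\in A_1$,
\[
\br{\phi(a),\phi(b)}_{2,\mult} = \mult_2 \dgal{\phi(a),\phi(b)}_2 = \mult_2 (\phi\otimes\phi)(\dgal{a,b}_1) = \phi\, \mult_1 \dgal{a,b}_1 = \phi(\br{a,b}_{1,\mult}).
\]
To descend this identity, I would note that any algebra homomorphism sends commutators to commutators, so $\phi([A_1,A_1])\subseteq [A_2,A_2]$, which makes $\overline{\phi}:A_1/[A_1,A_1]\to A_2/[A_2,A_2]$ well defined. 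By Lemma \ref{Lem:Out-Ind-2} (in the untwisted case $\alpha=\id_{A_j}$), the brackets $\br{-,-}_{j,\mult}$ descend to antisymmetric operations on $A_j/[A_j,A_j]$, and the displayed identity passes to these quotients verbatim.

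For part (2), I would invoke the case $\alpha=\id_A$ of Proposition \ref{Pr:Out-Lod} (reproducing Van den Bergh's original result from \cite[\S2.4]{VdB1}): when $\dgal{-,-}_j$ is a double Poisson bracket associated with the outer bimodule structure, $\br{-,-}_{j,\mult}$ is a left Loday bracket on $A_j$ that descends to a Lie bracket on $A_j/[A_j,A_j]$. Given part (1), both $\phi$ and $\overline{\phi}$ intertwine the respective binary operations, and since they are already $\kk$-linear (indeed algebra homomorphisms), this is exactly what is required to be a morphism of left Loday algebras (resp. of Lie algebras after passing to the quotient).

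There is essentially no obstacle here: everything reduces to the trivial compatibility of $\phi$ with $\mult$ and with $\dgal{-,-}$, together with the already established descent results. The only minor care needed is to verify that the quotient map $\overline{\phi}$ is well defined, which is immediate from the multiplicativity of $\phi$.
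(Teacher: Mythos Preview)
Your proposal is correct and follows essentially the same approach as the paper: the key computation $\br{\phi(a),\phi(b)}_{2,\mult}=\mult_2(\phi\otimes\phi)\dgal{a,b}_1=\phi(\br{a,b}_{1,\mult})$ is identical (the paper writes it via Sweedler notation), the descent to $A_j/[A_j,A_j]$ via $\phi([A_1,A_1])\subset[A_2,A_2]$ and Lemma~\ref{Lem:Out-Ind-2} is the same, and item~2 is likewise deduced from item~1 together with Proposition~\ref{Pr:Out-Lod}.
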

\begin{proof}
By Proposition \ref{Pr:Out-Lod}, we have in the Poisson case that $(A_j,\br{-,-}_{j,\mult})$ is a left Loday algebra while $(A_j/[A_j,A_j],\br{-,-}_{j,\mult})$ is a Lie algebra, so item \emph{2.} is a consequence of item \emph{1}. 

For the first part of item \emph{1}, we simply need to remark that 
$$\br{\phi(a),\phi(b)}_{2,\mult}
=\dgal{\phi(a),\phi(b)}_{2}'\dgal{\phi(a),\phi(b)}_{2}''
=\mult \circ \phi^{\otimes 2} \dgal{a,b}_1
=\phi(\br{a,b}_{1,\mult})\,.$$
We get the statement for $\overline{\phi}$ because $\phi([A_1,A_1])\subset [A_2,A_2]$ and the induced operation on $A_j/[A_j,A_j]$ is obtained by projecting $\br{-,-}_{j,\mult}$ from $A_j$ to $A_j/[A_j,A_j]$ with $j=1,2$, see Lemma \ref{Lem:Out-Ind-2}. 
\end{proof}

\begin{lemma}
Let $\phi:(A_1,\dgal{-,-}_1)\to (A_2,\dgal{-,-}_2)$ be a morphism of double brackets associated with the right bimodule structure. 
\begin{enumerate}
    \item Define the maps ${}_\bullet\!\dgal{-,-}_j$ and $\dgal{-,-}_{j,\bullet}$ from $\dgal{-,-}_j$ as in \eqref{Eq:in-brBullet}, and consider the linear map $\overline{\phi}:A_1/[A_1,A_1]\to A_2/[A_2,A_2]$ induced by $\phi$.  
Then ${}_\bullet\!\dgsmall{\overline{\phi}(\bar{a}),\phi(b)}_2
    =(\overline{\phi}\otimes \phi)({}_\bullet\!\dgal{\bar{a},b}_1)$ and 
$\dgsmall{\phi(b),\overline{\phi}(\bar{a})}_{2,\bullet}
    =(\phi \otimes \overline{\phi})(\dgal{b,\bar{a}}_{1,\bullet})$ for any $\bar{a}\in A/[A,A]$, $b\in A$. 
    
\item Define the maps ${}_\bullet\!\dgal{-,-}_{j,\bullet}$  as in Lemma \ref{Lem:R-Ind-der}. 
Then ${}_\bullet\!\dgsmall{\overline{\phi}(\bar{a}),\overline{\phi}(\bar{b})}_{2,\bullet}
    =\overline{\phi}^{\otimes 2}\,({}_\bullet\!\dgal{\bar{a},\bar{b}}_{1,\bullet})$
for any $\bar{a},\bar{b}\in A/[A,A]$.

\item If $\phi$ is a morphism of double $(12)$-weak Poisson algebras, and we let 
$\br{-,-}_{j,\Sym}$ be the unique Poisson bracket extending ${}_\bullet\!\dgal{-,-}_{j,\bullet}$ on $\Sym(A_j/[A_j,A_j])$, 
then $\overline{\phi}$ uniquely extends to a morphism of Poisson algebras 
$\phi_{\Sym}:\Sym(A_1/[A_1,A_1])\to \Sym(A_2/[A_2,A_2])$. 
\end{enumerate}
\end{lemma}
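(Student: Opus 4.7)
The plan is to unravel the definitions: for Parts 1 and 2 the statements follow from the morphism property $\dgal{\phi(a),\phi(b)}_2=\phi^{\otimes 2}(\dgal{a,b}_1)$ combined with the fact that algebra homomorphisms respect commutators; Part 3 then reduces to extending a linear map between vector spaces to an algebra morphism between symmetric algebras and verifying compatibility of Poisson brackets on generators.

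For Part 1, I would fix a lift $a\in A_1$ of $\bar{a}\in A_1/[A_1,A_1]$. Since $\phi([A_1,A_1])\subset[A_2,A_2]$, the map $\overline{\phi}$ is well-defined and $\overline{\phi}(\bar a)$ is the class of $\phi(a)$. Writing $\dgal{a,b}_1=d'\otimes d''$ with the summation understood, the morphism property gives $\dgal{\phi(a),\phi(b)}_2=\phi(d')\otimes\phi(d'')$. Projecting the first tensor factor to $A_2/[A_2,A_2]$ (which is how ${}_\bullet\!\dgal{-,-}_2$ is obtained from $\dgal{-,-}_2$) yields $\overline{\phi(d')}\otimes\phi(d'')=(\overline{\phi}\otimes\phi)(\bar d'\otimes d'')=(\overline{\phi}\otimes\phi)({}_\bullet\!\dgal{\bar a,b}_1)$, as claimed. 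The identity for $\dgal{-,-}_{\bullet}$ is obtained the same way by projecting on the second factor instead. Part 2 is then immediate: one projects the identities of Part 1 once more in the remaining $A_j$-slot, and well-definedness is guaranteed by Lemma~\ref{Lem:R-Ind-der}.

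For Part 3, by the universal property of the symmetric algebra, the linear map $\overline{\phi}:A_1/[A_1,A_1]\to A_2/[A_2,A_2]$ extends uniquely to a unital algebra morphism $\phi_{\Sym}:\Sym(A_1/[A_1,A_1])\to\Sym(A_2/[A_2,A_2])$. It remains to check that $\phi_{\Sym}$ intertwines the Poisson brackets $\br{-,-}_{j,\Sym}$ constructed in Proposition~\ref{Pr:R-Ind}. On generators $\bar a,\bar b\in A_1/[A_1,A_1]$, this is exactly Part 2 read through the definition $\br{\bar a,\bar b}_{j,\Sym}=\pi_2\dgal{a,b}_j$ (here $\pi_2$ denotes the projection $A_j^{\otimes 2}\to\Sym(A_j/[A_j,A_j])$ used in the proof of Proposition~\ref{Pr:R-Ind}). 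Since both $\br{-,-}_{1,\Sym}$ and $\br{-,-}_{2,\Sym}$ are biderivations for the symmetric product, and $\phi_{\Sym}$ is an algebra morphism, a standard induction on the degree of monomials in $\Sym(A_1/[A_1,A_1])$ propagates the identity $\br{\phi_{\Sym}(u),\phi_{\Sym}(v)}_{2,\Sym}=\phi_{\Sym}(\br{u,v}_{1,\Sym})$ from generators to arbitrary $u,v$. Uniqueness of the extension is automatic since any Poisson-compatible extension must in particular be an algebra morphism agreeing with $\overline{\phi}$ on generators.

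The only potential obstacle is the bookkeeping in Part 1 (keeping track of which tensor slot is being quotiented) and the verification that $\phi_{\Sym}$ really is determined by $\overline{\phi}$ alone; but both are routine consequences of the universal property of $\Sym$ and the descent already established in Lemma~\ref{Lem:R-Ind-der}.
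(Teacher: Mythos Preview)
Your proposal is correct and follows essentially the same approach as the paper's proof: unwind the definitions using the morphism identity $\dgal{\phi(a),\phi(b)}_2=\phi^{\otimes 2}\dgal{a,b}_1$ and project the appropriate tensor factors for Parts~1--2, then extend $\overline{\phi}$ to the symmetric algebra and use the biderivation property of $\br{-,-}_{j,\Sym}$ for Part~3. The only cosmetic difference is that the paper compresses your induction-on-degree argument into a single appeal to the uniqueness of $\br{-,-}_{j,\Sym}$ as a Leibniz-rule extension from generators.
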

\begin{proof}
For item \emph{1.}, we have by definition that 
\begin{align*}
{}_\bullet\!\dgsmall{\overline{\phi}(\bar{a}),\phi(b)}_2    
=&\big(\dgal{\phi(a),\phi(b)}_2'+ [A_2,A_2]\big) \otimes \dgal{\phi(a),\phi(b)}_2'' \\
=&\big(\phi(\dgal{a,b}_1')+ [A_2,A_2] \big) \otimes \phi(\dgal{a,b}_1'') 
=(\overline{\phi}\otimes \phi)({}_\bullet\!\dgal{\bar{a},b}_1)\,.
\end{align*}
The second identity is proved similarly. Item \emph{2.} is also obtained in that way. 
Finally, introduce the map $\phi_{\Sym}:\Sym(A_1/[A_1,A_1])\to \Sym(A_2/[A_2,A_2])$ through  $\phi_{\Sym}(a_1 \bullet \ldots \bullet a_n)=\overline{\phi}(a_1)\bullet \ldots \bullet \overline{\phi}(a_n)$ with the notations of the proof of Proposition \ref{Pr:R-Ind}. 
Then, item \emph{3.} is a consequence of the uniqueness of $\br{-,-}_{j,\Sym}$ by extension through Leibniz rules of its definition on elements of $A_j/[A_j,A_j]$.
\end{proof}


\section{Operations induced on representation spaces}  \label{S:Rep}

In this section, we fix a positive integer $n\in \N^\times$. 
We denote by $\Mat_n(B)$ the ring of $n\times n$ matrices with coefficients in a $\kk$-algebra $B$.

Given a $\kk$-algebra $A$, we introduce the $n$-th representation space $\Rep(A,n)$ as the set which parametrises representations $\rho$ of $A$ on $\kk^n$, i.e. for any $a\in A$ we have  $\rho(a)\in \Mat_{n}(\kk)$. 
We also let $\Xtt(a)$, for $a\in A$, denote the matrix-valued function on $\Rep(A,n)$ defined through $\Xtt(a)(\rho)=\rho(a)$. 
For $1\leq i,j\leq n$, we simply denote by $a_{ij}:=\Xtt(a)_{ij}$ the function returning the $(i,j)$ entry of $\rho(a)$ at $\rho\in \Rep(A,n)$. 
In that way, we get a ring of functions on $\Rep(A,n)$ generated by symbols $a_{ij}$ with $a\in A$ and $1\leq i,j\leq n$ subject to the following conditions of (matrix) addition and multiplication: 
\begin{equation*}
    \lambda a_{ij}+\mu b_{ij}=(\lambda a+\mu b)_{ij},\quad 
1_{ij}=\delta_{ij}, \quad (ab)_{ij}=\sum_{1\leq k \leq n}a_{ik}b_{kj}\,,
\end{equation*}
where $a,b\in A$, $\lambda,\mu \in \kk$, $1\leq i,j\leq n$.
We denote this commutative ring $\kk[\Rep(A,n)]$ and, to ease notations, we also write $\Rep(A,n)$ for the corresponding affine scheme. 
It is well-known that $\Rep(A,n)$ does not depend on the presentation of $A$ because it represents the functor which sends a finitely generated $\kk$-algebra $B$ 
to the set of ($\kk$-algebra) homomorphisms $\Hom(A, \Mat_n(B))$.  
Let us also note that any homomorphism $\phi:A_1\to A_2$ induces a homomorphism $\phi_n:\kk[\Rep(A_1,n)]\to \kk[\Rep(A_2,n)]$ obtained by setting $\phi_n(a_{ij}):=\phi(a)_{ij}$ for any $a\in A_1$. In matrix form, we set  $\phi_n(\Xtt(a)):=\Xtt(\phi(a))$.  
Finally, we introduce some notation: given $d\in A^{\otimes 2}$, $e\in A^{\otimes 3}$ 
and indices $i,j,k,l,u,v\in \{1,\ldots, n\}$, we set 
$$d_{ij,kl}:= d'_{ij}\,d''_{kl}\,, \qquad
e_{ij,kl,uv}:=e'_{ij} \, e''_{kl} \, e'''_{uv}\,,$$
which are elements of $\kk[\Rep(A,n)]$.

\begin{remark}
The different constructions given below can be adapted to double brackets that are $B$-linear, see \ref{ss:base}. In that situation, one has to consider the space $\Rep_B(A,n)$ of representations \emph{relative to $B$}. 
For the important case $B=\oplus_{i \in I}\kk e_i$, the explicit construction can be found in \cite[\S7.1]{VdB1}. 
\end{remark}

\subsection{Case of the outer bimodule structure} 

We consider double brackets on $A$ associated with the $\alpha$-twisted outer bimodule structure on $A^{\otimes 2}$, see \ref{ss:E-Out} with $\beta=\alpha$. Recall that for any $n\geq 1$,  $\alpha_n$ denotes the automorphism of $\kk[\Rep(A,n)]$ induced by $\alpha$. 
The following result is a  straightforward generalisation of Van den Bergh's work \cite[\S7.5]{VdB1}. 

\begin{proposition} \label{Pr:Rep-twOut}
If $\dgal{-,-}$ is a double bracket associated with the $\alpha$-twisted outer bimodule structure, 
there is a unique antisymmetric bilinear operation $\br{-,-}$ on $\kk[\Rep(A,n)]$  
which is a $\alpha_n$-twisted biderivation, i.e. for any $f,g,h\in \kk[\Rep(A,n)]$
$$\br{f,gh}=\alpha_n(g) \br{f,h}+ \br{f,g} \alpha_n(h), \quad 
\br{fh,g}=\alpha_n(f) \br{h,g}+ \br{f,g} \alpha_n(h)\,,$$
and which satisfies for any $a,b\in A$, 
\begin{equation} \label{Eq:Rep-twOut}
    \br{a_{ij},b_{kl}}=\dgal{a,b}_{kj,il}=\dgal{a,b}'_{kj}\dgal{a,b}''_{il}\,.
\end{equation}
\end{proposition}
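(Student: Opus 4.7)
The plan is to define $\br{-,-}$ on the generators $a_{ij}$ of $\kk[\Rep(A,n)]$ by the formula \eqref{Eq:Rep-twOut}, extend by the prescribed $\alpha_n$-twisted Leibniz rules in each argument, and then verify well-definedness against the defining relations of $\kk[\Rep(A,n)]$. Uniqueness is automatic since $\kk[\Rep(A,n)]$ is generated by the $a_{ij}$, so any $\alpha_n$-twisted biderivation is determined by its values on such pairs of generators.

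For existence, I would check compatibility of \eqref{Eq:Rep-twOut} with the three defining relations among the $a_{ij}$: $\kk$-bilinearity in $a$, the unit relation $1_{ij} = \delta_{ij}$, and the matrix product $(ab)_{ij} = \sum_m a_{im} b_{mj}$. Bilinearity is inherited directly from that of $\dgal{-,-}$. The unit relation reduces to $\br{1_{ij}, b_{kl}} = 0$, which follows from $\dgal{1,b}=0$; this in turn is a general consequence of applying the Leibniz rule to $1 = 1\cdot 1$ and solving. The central check is multiplicativity: I would compute $\br{(ab)_{ij}, c_{kl}} = \dgal{ab,c}_{kj,il}$ using the left Leibniz rule \eqref{Eq:DbrOut-Der2} (with $\beta=\alpha$), and separately expand $\sum_m \br{a_{im} b_{mj}, c_{kl}}$ using the prescribed $\alpha_n$-twisted biderivation rule; both should collapse to
\begin{equation*}
\sum_m \bigl( \alpha_n(a_{im})\, \dgal{b,c}_{kj,ml} + \alpha_n(b_{mj})\, \dgal{a,c}_{km,il} \bigr),
\end{equation*}
which gives the match. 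Compatibility in the second argument is symmetric and uses \eqref{Eq:DbrOut-Der1}.

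Antisymmetry at the level of generators is immediate from the cyclic antisymmetry $\dgal{a,b} = -\dgal{b,a}^{\circ}$: unpacking \eqref{Eq:Rep-twOut} gives $\dgal{a,b}'_{kj}\dgal{a,b}''_{il} = -\dgal{b,a}''_{kj}\dgal{b,a}'_{il} = -\br{b_{kl}, a_{ij}}$. This extends to all of $\kk[\Rep(A,n)]$ because the left and right $\alpha_n$-twisted biderivation properties prescribed for $\br{-,-}$ swap into one another under $(f,g)\mapsto(g,f)$, and the algebra is commutative.

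The main obstacle is purely bookkeeping in the multiplicativity check. The key structural observation is that, for the $\alpha$-twisted outer bimodule with $\beta = \alpha$, the Leibniz rule \eqref{Eq:DbrOut-Der2} places the $\alpha$-twist precisely on the non-bracketed factor, so after applying \eqref{Eq:Rep-twOut} the twist lands on exactly the entries $a_{im}$ and $b_{mj}$ required by the $\alpha_n$-twisted biderivation axiom. Beyond this point alignment, the proof is a mild generalisation of Van den Bergh's argument \cite[\S7.5]{VdB1} for the case $\alpha = \id_A$.
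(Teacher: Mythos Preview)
Your proposal is correct and follows essentially the same approach as the paper: both verify well-definedness by matching the matrix-product relation against the $\alpha$-twisted Leibniz rule of the double bracket, then deduce antisymmetry from cyclic antisymmetry, with uniqueness immediate from generation. The only cosmetic difference is that the paper carries out the multiplicativity check in the second argument via \eqref{Eq:DbrOut-Der1}, whereas you do it in the first argument via \eqref{Eq:DbrOut-Der2}; you also spell out the unit and bilinearity checks and the propagation of antisymmetry, which the paper leaves implicit.
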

\begin{proof}
Uniqueness is clear, if the bilinear operation is well-defined. To check the latter, note that since $\dgal{-,-}$ is associated with the $\alpha$-twisted outer bimodule we have from \eqref{Eq:Rep-twOut}   
\begin{align*}
    \br{a_{ij},(bc)_{kl}}=&(\alpha(b)\dgal{a,c}'\otimes \dgal{a,c}''+\dgal{a,b}'\otimes \dgal{a,b}''\alpha(c))_{kj,il} \\
=&\sum_{1\leq u \leq n} (\,\alpha(b)_{ku} \dgal{a,c}_{uj,il}+ \dgal{a,b}_{kj,iu} \alpha(c)_{ul}\,) \\
=&\sum_{1\leq u \leq n} (\,\alpha_n(b_{ku}) \br{a_{ij},c_{ul}} + \br{a_{ij},b_{ku}} \alpha_n(c_{ul})\,) \,.
\end{align*}
This is precisely $\br{a_{ij},\sum_u b_{ku} c_{ul}}$ thanks to the $\alpha_n$-twisted biderivation rules. 
The antisymmetry of $\br{-,-}$ follows from the cyclic antisymmetry of $\dgal{-,-}$ as noted in \cite[Prop. 7.5.1]{VdB1}. 
\end{proof}

\begin{remark} \label{Rem:RepMat-Out}
When $\alpha=\id_A$, we can encode the operation $\br{-,-}$ satisfying \eqref{Eq:Rep-twOut} in terms of an operation on matrices $\Xtt(a)=(a_{ij})$, $a\in A$, defined through 
$$\dgal{\Xtt(a),\Xtt(b)}_{(n)}
:=\sum_{1\leq i,j,k,l\leq n} 
\br{a_{ij},b_{kl}} \, E_{kj}\otimes E_{il}\,,$$
for any $a,b \in A$. This gives precisely the operation 
\eqref{Eq:Br-VdB} from the introduction, which satisfies the antisymmetry and Leibniz rules \eqref{Eq:Intro-VdB}.
\end{remark}

In contrast to the case of Proposition \ref{Pr:Rep-twOut}, an arbitrary double bracket may \emph{not} yield a well-defined map on representation spaces. It can thus be thought of as a purely noncommutative operation, as the next example shows. 

\begin{example} 
Let $A=\kk\langle x,y\rangle$, and consider the one dimensional representation  $\kk[Rep(A,1)]\simeq \kk[\hat{x},\hat{y}]$ with $\hat{x}=x_{11}$, $\hat{y}=y_{11}$. We take $\alpha = \id_A$, $\beta: x \mapsto y, y \mapsto x$, and we also denote by $\alpha,\beta$ the induced maps on $\kk[\hat{x},\hat{y}]$. 
Consider the unique double bracket associated with the $(\alpha,\beta)$-twisted outer bimodule such that $\dgal{x,x}=0=\dgal{y,y}$ and $\dgal{x,y}\neq 0$.
Assume that there is a well-defined operation $\br{-,-}$ such that \eqref{Eq:Rep-twOut} holds. 
As $\hat{x}\hat{y}$ corresponds to $(xy)_{11}$, we calculate from the Leibniz rules of $\dgal{-,-}$ that  
\begin{align*}
    \{\hat{x}\hat{y},\hat{y}\} &= \alpha(\hat{x}) \{\hat{y},\hat{y}\} + \{\hat{x},\hat{y}\}\beta(\hat{y})=\hat{x}\{\hat{x},\hat{y}\} \,, \\
\{\hat{x}\hat{y},\hat{y}\} &=\{\hat{y}\hat{x},\hat{y}\} = \alpha(\hat{y}) \{\hat{x},\hat{y}\} + \{\hat{y},\hat{y}\}\beta(\hat{x})=\hat{y}\{\hat{x},\hat{y}\} \,,
\end{align*}
from which $(\hat{x}-\hat{y})\{\hat{x},\hat{y}\}=0$.
If $\dgal{x,y}_{11,11}\neq 0$ on $\Rep(A,1)$  (e.g. $\dgal{x,y}=1\otimes 1$), we have under the above identification that 
$\{\hat{x},\hat{y}\}=\dgal{x,y}_{11,11}$ is nonzero. So our observations yield that $\hat{x}-\hat{y}=0$. This is not a relation in $\kk[\hat{x},\hat{y}]$, so $\br{-,-}$  does not exist. 
\end{example}

Recall from Example \ref{Ex:ctr-Equiv} that a double bracket $\dgal{-,-}$ associated with the $\alpha$-twisted outer bimodule is equivalent to the double bracket $(\alpha^{-1})^{\otimes 2}\circ\dgal{-,-}$ associated with the (non-twisted) outer bimodule. On representation spaces, this means that the operation $\br{-,-}$ from Proposition \ref{Pr:Rep-twOut} can be equivalently understood from $\alpha^{-1}_n\circ\br{-,-}$, which is an antisymmetric biderivation. Since interesting examples of double Poisson brackets only appear for  $\alpha=\id_A$, we restrict to that case where we recall the following result.

\begin{theorem}[\cite{VdB1}] \label{Thm:Rep-Out-Lie} 
If $\dgal{-,-}$ is a double Poisson bracket associated with the outer bimodule structure, 
then the antisymmetric biderivation $\br{-,-}$ on $\kk[\Rep(A,n)]$ given in Proposition \ref{Pr:Rep-twOut} is a Poisson bracket. 
Furthermore, the Poisson bracket descends to the coordinate ring of trace functions 
$\kk[\Rep(A,n)]^{\tr}$ generated by $\{\tr\Xtt(a) \mid a \in A\}$. 
\end{theorem}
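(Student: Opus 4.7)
The plan is to prove Jacobi identity for $\br{-,-}$ on generators $a_{ij}$ and then verify descent to trace functions via a short direct computation. Since $\br{-,-}$ is antisymmetric and a biderivation by Proposition \ref{Pr:Rep-twOut}, the Jacobiator $J(f,g,h):=\br{f,\br{g,h}}+\br{g,\br{h,f}}+\br{h,\br{f,g}}$ is also a derivation in each of its three arguments on the commutative ring $\kk[\Rep(A,n)]$. Therefore it suffices to verify that $J(a_{ij},b_{kl},c_{uv})=0$ for all $a,b,c\in A$ and all indices $1\leq i,j,k,l,u,v\leq n$.

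For this, I would expand $\br{a_{ij},\br{b_{kl},c_{uv}}}$ using the defining formula \eqref{Eq:Rep-twOut} twice. Writing $d=\dgal{b,c}\in A^{\otimes 2}$, we have $\br{b_{kl},c_{uv}}=d'_{uk}\,d''_{lv}$, and applying the biderivation rule together with \eqref{Eq:Rep-twOut} yields
\begin{equation*}
\br{a_{ij},\br{b_{kl},c_{uv}}}
=\dgal{a,d'}_{uj,ik}\,d''_{lv}+d'_{uk}\,\dgal{a,d''}_{lj,iv}\,,
\end{equation*}
which is precisely the $(u,j,i,k,l,v)$-component of $\dgal{a,\dgal{b,c}}_L$ plus that of $\dgal{a,\dgal{b,c}}_R$ (with the index arrangement consistent with Lemma \ref{Lem:OutJac} and the matrix notation in Remark \ref{Rem:RepMat-Out}). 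A careful bookkeeping of the six indices then shows that the cyclic sum $J(a_{ij},b_{kl},c_{uv})$ reassembles into a single component of the double Jacobiator $\dgal{a,b,c}\in A^{\otimes 3}$ after accounting for the permutations $\tau_{(123)},\tau_{(132)}$ in \eqref{Eq:dJac}. Since $\dgal{-,-}$ is assumed Poisson, $\dgal{a,b,c}=0$, hence $J=0$ and the Jacobi identity holds.

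For the trace statement, I would directly compute, using $\tr\Xtt(a)=\sum_i a_{ii}$ and \eqref{Eq:Rep-twOut}:
\begin{equation*}
\br{\tr\Xtt(a),\tr\Xtt(b)}
=\sum_{i,k}\dgal{a,b}_{ki,ik}
=\sum_{i,k}\dgal{a,b}'_{ki}\dgal{a,b}''_{ik}
=\tr\Xtt(\dgal{a,b}''\dgal{a,b}')\,.
\end{equation*}
Since the trace is cyclic and $\dgal{a,b}''\dgal{a,b}'\equiv\dgal{a,b}'\dgal{a,b}''=\br{a,b}_{\mult}$ modulo $[A,A]$, this equals $\tr\Xtt(\br{a,b}_{\mult})$, a trace function. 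By Proposition \ref{Pr:Out-Lod} (in the case $\alpha=\id_A$), $\br{-,-}_{\mult}$ descends to a well-defined Lie bracket on $A/[A,A]$, so the above formula shows both that $\kk[\Rep(A,n)]^{\tr}$ is closed under $\br{-,-}$ and that the induced bracket on trace functions is well-defined independently of the chosen lifts.

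The main technical obstacle is the first step: the index contraction that matches a triple sum of products of entries of $\dgal{a,\dgal{b,c}}$, $\dgal{b,\dgal{c,a}}$ and $\dgal{c,\dgal{a,b}}$ with the corresponding components of $\dgal{a,b,c}\in A^{\otimes 3}$. The arrangement of indices in \eqref{Eq:Rep-twOut} (note the transposition-like pattern $(kj,il)$ rather than $(ij,kl)$) is exactly what makes the cyclic sum collapse to a single tensor component, so one must keep careful track of how the $\tau_{(123)}$ and $\tau_{(132)}$ appearing in the definition \eqref{Eq:dJac} of the double Jacobiator translate into cyclic rotations of the six representation indices.
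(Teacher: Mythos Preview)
Your overall strategy matches the paper's: reduce to generators via the biderivation property, expand the triple bracket using \eqref{Eq:Rep-twOut}, and identify the result with components of the double Jacobiator. However, your central claim---that the cyclic sum $J(a_{ij},b_{kl},c_{uv})$ reassembles into \emph{a single component} of $\dgal{a,b,c}$---is incorrect. The paper's computation gives
\[
J(a_{ij},b_{kl},c_{uv})=\dgal{a,b,c}_{uj,il,kv}-\dgal{a,c,b}_{kj,iv,ul}\,,
\]
a \emph{difference of two} components of the double Jacobiator, evaluated at different index patterns and with the last two arguments swapped in the second term. What happens is that the three ``$L$-type'' terms you identify do combine (after the $\tau_{(123)}^s$) into the first component, but the three ``$R$-type'' terms do \emph{not} land at the same indices: they produce the second component with a different index arrangement. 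Your proposal glosses over exactly this point with ``careful bookkeeping,'' but the bookkeeping does not yield what you assert. (There is also a slip earlier: from \eqref{Eq:Rep-twOut} one has $\br{b_{kl},c_{uv}}=d'_{ul}\,d''_{kv}$, not $d'_{uk}\,d''_{lv}$.) The conclusion survives because both terms vanish under the Poisson hypothesis, but the intermediate identity you state is wrong and would mislead in the weak-Poisson settings treated later in the paper, where the precise form of this difference is what matters.

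Your trace argument is correct and agrees with the paper's: one gets $\br{\tr\Xtt(a),\tr\Xtt(b)}=\tr\Xtt(\br{a,b}_{\mult})$ directly (in fact $\sum_{i,k}d'_{ki}d''_{ik}=\tr\Xtt(d'd'')$ without needing to invoke cyclicity), and closure of $\kk[\Rep(A,n)]^{\tr}$ follows.
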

\begin{proof}
The first part is Proposition 7.5.2 in \cite{VdB1}. For comparison with other cases, we note that this result follows from the identity  
\begin{equation}
    \br{a_{ij} , \br{ b_{kl} , c_{uv} }}
+ \br{b_{kl} , \br{ c_{uv} , a_{ij} }}
+\br{c_{uv} , \br{ a_{ij}, b_{kl} }} 
=
\dgal{a,b,c}_{uj,il,kv} - \dgal{a,c,b}_{kj,iv,ul}\,,
\end{equation}
for any $a,b,c\in A$ and $i,j,k,l,u,v\in \{1,\ldots,n\}$. 
The second part is Proposition 7.7.2 in \cite{VdB1} (see more generally \cite{CB11}), which is obtained from the first part and the fact that \eqref{Eq:Rep-twOut} yields $\br{\tr\Xtt(a),\tr\Xtt(b)}=\tr\Xtt(\br{a,b}_{\mult})$, where $\br{-,-}_{\mult}$ is given by \eqref{Eq:Out-brm}. 
\end{proof}

\begin{remark}
When $\kk$ is algebraically closed in addition to $\operatorname{char}(\kk)=0$, 
$\kk[\Rep(A,n)]^{\tr}$ is the coordinate ring of the GIT quotient $\Rep(A,n)/\!/\Gl_n(\kk)$. 
Here, $\Gl_n(\kk)$ acts by conjugation on each $\Xtt(a)$, $a\in A$. 
(This follows from the celebrated Le Bruyn-Procesi theorem \cite{LP90} and it also holds in the relative semisimple case, see \cite{CB11}.)
In that case, Theorem \ref{Thm:Rep-Out-Lie} is therefore turning $\Rep(A,n)/\!/\Gl_n(\kk)$ into a Poisson variety. This comment also applies for the analogous theorems given below. 
\end{remark}

\subsection{Case of the inner bimodule structure}

We now work with double brackets on $A$ associated with the $\alpha$-twisted inner bimodule structure on $A^{\otimes 2}$, see \ref{ss:E-Int} with $\beta=\alpha$. 
\begin{proposition} \label{Pr:Rep-twIn}
If $\dgal{-,-}$ is a double bracket associated with the $\alpha$-twisted inner bimodule structure,
there is a unique antisymmetric bilinear operation $\br{-,-}$ on $\kk[\Rep(A,n)]$ 
which is a $\alpha_n$-twisted biderivation 
satisfying for any $a,b\in A$, 
\begin{equation} \label{Eq:Rep-twIn}
    \br{a_{ij},b_{kl}}=\dgal{a,b}_{il,kj}=\dgal{a,b}'_{il}\dgal{a,b}''_{kj}\,.
\end{equation}
\end{proposition}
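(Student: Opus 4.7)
The plan is to deduce Proposition \ref{Pr:Rep-twIn} from Proposition \ref{Pr:Rep-twOut} via the equivalence between the inner and outer bimodule structures. By Lemma \ref{Lem:Ind-Equiv} applied with $\Psi={}^\circ$, the double bracket $\dgal{-,-}^\circ$ is associated with the $\alpha$-twisted outer bimodule structure whenever $\dgal{-,-}$ is associated with the $\alpha$-twisted inner bimodule structure. Applying Proposition \ref{Pr:Rep-twOut} to $\dgal{-,-}^\circ$ yields a unique antisymmetric $\alpha_n$-twisted biderivation $\br{-,-}$ on $\kk[\Rep(A,n)]$ satisfying
\begin{equation*}
    \br{a_{ij},b_{kl}}
    = (\dgal{a,b}^\circ)'_{kj}\,(\dgal{a,b}^\circ)''_{il}
    = \dgal{a,b}''_{kj}\,\dgal{a,b}'_{il}
    = \dgal{a,b}_{il,kj},
\end{equation*}
which is precisely \eqref{Eq:Rep-twIn}. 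This yields existence, uniqueness, antisymmetry, and the biderivation property at one stroke.

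Alternatively, the statement admits a direct proof mimicking that of Proposition \ref{Pr:Rep-twOut}. Uniqueness is immediate from the fact that $\kk[\Rep(A,n)]$ is generated by the symbols $a_{ij}$ and that a biderivation is determined by its values on generators. For existence, the only nontrivial check is compatibility with the matrix multiplication relation $(bc)_{kl}=\sum_u b_{ku}c_{ul}$: the $\alpha_n$-twisted biderivation rule gives
\begin{equation*}
    \br{a_{ij},(bc)_{kl}}
    = \sum_{u}\bigl(\alpha_n(b_{ku})\br{a_{ij},c_{ul}} + \br{a_{ij},b_{ku}}\alpha_n(c_{ul})\bigr),
\end{equation*}
which must coincide with $\dgal{a,bc}_{il,kj}$. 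Expanding the latter via the inner Leibniz rule \eqref{Eq:DbrIn-Der1} with $\beta=\alpha$, namely $\dgal{a,bc} = \dgal{a,c}'\otimes \alpha(b)\dgal{a,c}''+ \dgal{a,b}'\alpha(c)\otimes \dgal{a,b}''$, and reading off entries produces exactly the same expression. Compatibility with $1_{ij}=\delta_{ij}$ and $\kk$-linearity is clear from $\dgal{1,-}=0=\dgal{-,1}$ and bilinearity. Antisymmetry is then a direct consequence of the cyclic antisymmetry \eqref{Eq:Dbr-cyc}, since
\begin{equation*}
    \dgal{a,b}'_{il}\dgal{a,b}''_{kj}
    = -\dgal{b,a}''_{il}\dgal{b,a}'_{kj}
    = -\dgal{b,a}'_{kj}\dgal{b,a}''_{il}.
\end{equation*}

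There is no real obstacle: the whole content lies in the index bookkeeping, and in fact the only substantive difference with Proposition \ref{Pr:Rep-twOut} is the swap of tensor factors, which is precisely what the equivalence via ${}^\circ$ encodes. I would therefore present the proof very succinctly by invoking Proposition \ref{Pr:Rep-twOut} together with Lemma \ref{Lem:Ind-Equiv}, rather than redoing the calculation.
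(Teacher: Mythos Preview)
Your proposal is correct and takes essentially the same approach as the paper: the paper's proof is a one-line remark that Proposition~\ref{Pr:Rep-twIn} is a reformulation of Proposition~\ref{Pr:Rep-twOut} via the swap automorphism ${}^\circ$, which is precisely your primary argument. Your explicit index computation and alternative direct verification are more detailed than what the paper provides, but the underlying idea is identical.
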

Note that this result is a reformulation of Proposition \ref{Pr:Rep-twOut} using the equivalence induced by the swap automorphism $^\circ$ between double brackets associated with the $\alpha$-twisted inner/outer bimodule. 
Since this equivalence extends to the Poisson case (see Theorem \ref{Thm:Poi-OutIn}), we can expect the following analogue of Theorem \ref{Thm:Rep-Out-Lie}.

\begin{theorem} \label{Thm:Rep-In-Lie} 
If $\dgal{-,-}$ is a double Poisson bracket associated with the inner bimodule structure, 
then the antisymmetric biderivation $\br{-,-}$ on $\kk[\Rep(A,n)]$ given in Proposition \ref{Pr:Rep-twIn} is a Poisson bracket. 
Furthermore, the Poisson bracket descends to the ring of trace functions 
$\kk[\Rep(A,n)]^{\tr}$. 
\end{theorem}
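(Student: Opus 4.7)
The plan is to deduce Theorem \ref{Thm:Rep-In-Lie} directly from its outer-bimodule counterpart, Theorem \ref{Thm:Rep-Out-Lie}, by exploiting the equivalence under the swap automorphism supplied by Proposition \ref{Pr:Op-Equiv} and Theorem \ref{Thm:Poi-OutIn}. Concretely, I would set $\dgal{-,-}_{out}:=\dgal{-,-}^\circ$, which is a double Poisson bracket associated with the outer bimodule structure by Theorem \ref{Thm:Poi-OutIn}. Applying Proposition \ref{Pr:Rep-twOut} and Theorem \ref{Thm:Rep-Out-Lie} to $\dgal{-,-}_{out}$ produces a Poisson bracket $\br{-,-}_{out}$ on $\kk[\Rep(A,n)]$ characterised by $\br{a_{ij},b_{kl}}_{out}=\dgal{a,b}_{out,kj,il}=\dgal{a,b}_{out}'{}_{kj}\dgal{a,b}_{out}''{}_{il}$.

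Next, I would check that $\br{-,-}_{out}$ coincides with the operation $\br{-,-}$ produced by Proposition \ref{Pr:Rep-twIn}. This is a one-line verification: since $\dgal{a,b}_{out}=\dgal{a,b}^\circ$, we have $\dgal{a,b}_{out}'=\dgal{a,b}''$ and $\dgal{a,b}_{out}''=\dgal{a,b}'$, so
\begin{equation*}
\br{a_{ij},b_{kl}}_{out}=\dgal{a,b}''_{kj}\,\dgal{a,b}'_{il}=\dgal{a,b}'_{il}\,\dgal{a,b}''_{kj}=\dgal{a,b}_{il,kj}=\br{a_{ij},b_{kl}}\,,
\end{equation*}
which matches \eqref{Eq:Rep-twIn}. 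Since both operations are antisymmetric biderivations with the same values on the generators $\{a_{ij}\mid a\in A,\,1\leq i,j\leq n\}$ of $\kk[\Rep(A,n)]$, they coincide. Hence $\br{-,-}$ is a Poisson bracket, as it inherits Jacobi identity from $\br{-,-}_{out}$.

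For the descent to trace functions, I would compute directly that for any $a,b\in A$,
\begin{equation*}
\br{\tr\Xtt(a),\tr\Xtt(b)}=\sum_{i,k}\dgal{a,b}'_{ik}\,\dgal{a,b}''_{ki}=\tr\Xtt(\dgal{a,b}'\dgal{a,b}'')=\tr\Xtt(\br{a,b}_\mult)\,,
\end{equation*}
where $\br{-,-}_\mult=\mult\circ\dgal{-,-}$ is the induced operation of \ref{ss:E-Int}. Thus the subring generated by traces $\tr\Xtt(a)$, $a\in A$, is closed under $\br{-,-}$, which by the first part of the theorem endows $\kk[\Rep(A,n)]^{\tr}$ with an induced Poisson bracket.

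I expect no serious obstacle: the whole argument reduces to the already-established outer case via the swap equivalence, and the identification of the two representation-level brackets is purely bookkeeping of indices. The only care needed is to track carefully which factor of $\dgal{a,b}$ appears in which matrix slot, so that the identification $\br{-,-}_{out}=\br{-,-}$ is transparent. As noted in the proof of Theorem \ref{Thm:Rep-Out-Lie}, the trace computation can equivalently be phrased using $\br{-,-}_{1,\mult}$ from Theorem \ref{Thm-InOut}, which gives an independent check that the same Lie bracket is induced on $A/[A,A]$ in the outer and inner cases.
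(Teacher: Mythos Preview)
Your proposal is correct and matches the approach the paper explicitly endorses in the first sentence of its proof (``This basically follows from Theorem~\ref{Thm:Rep-Out-Lie}''): reduce to the outer case via the swap equivalence of Theorem~\ref{Thm:Poi-OutIn}, then verify that the two induced operations on $\kk[\Rep(A,n)]$ agree on generators. Your index bookkeeping is accurate.

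The only difference is that the paper, after stating this, chooses to also present a self-contained direct computation (``Without relying on that result, we compute \ldots''), showing that the Jacobiator on $\kk[\Rep(A,n)]$ equals $\dgal{a,b,c}_{iv,kj,ul}-\dgal{a,c,b}_{il,uj,kv}$. This parallels the analogous formula in the proof of Theorem~\ref{Thm:Rep-Out-Lie} and is included for comparison with the right/left bimodule cases (Theorems~\ref{Thm:Rep-Right-Lie} and~\ref{Thm:Rep-Left-Lie}), where such explicit index formulas are needed to identify which weak Jacobiator controls the Jacobi identity. Your reduction argument is shorter and perfectly adequate for the inner case; the paper's direct computation buys a uniform presentation across all four bimodule structures.
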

\begin{proof}
This basically follows from Theorem \ref{Thm:Rep-Out-Lie}. Without relying on that result, we compute 
\begin{align*}
\br{a_{ij},\br{b_{kl},c_{uv}}}=& \br{a_{ij}, \dgal{b,c}_{kv,ul}} 
=\dgal{a,\dgal{b,c}'}_{iv,kj} \dgal{b,c}''_{ul} 
+ \dgal{a,\dgal{b,c}''}_{il,uj} \dgal{b,c}'_{kv} \\
=&\left(\dgal{a,\dgal{b,c}}_L \right)_{iv,kj,ul}
- \left( \dgal{a,\dgal{c,b}}_L \right)_{il,uj,kv}\,,
\end{align*}
thanks to \eqref{Eq:Rep-twIn} and the cyclic antisymmetry \eqref{Eq:Dbr-cyc}. 
This can be used directly to write  
\begin{align*}
\br{b_{kl},\br{c_{uv},a_{ij}}}
=&\left(\tau_{(123)}\dgal{b,\dgal{c,a}}_L \right)_{iv,kj,ul}
- \left(\tau_{(132)} \dgal{b,\dgal{a,c}}_L \right)_{il,uj,kv}\,, \\
\br{c_{uv},\br{a_{ij},b_{kl}}}
=&\left(\tau_{(132)}\dgal{c,\dgal{a,b}}_L \right)_{iv,kj,ul}
- \left(\tau_{(123)} \dgal{c,\dgal{b,a}}_L \right)_{il,uj,kv}\,.
\end{align*}
Hence we obtain for any $a,b,c\in A$ and indices in $\{1,\ldots,n\}$, 
\begin{equation}
    \br{a_{ij} , \br{ b_{kl} , c_{uv} }}
+ \br{b_{kl} , \br{ c_{uv} , a_{ij} }}
+\br{c_{uv} , \br{ a_{ij}, b_{kl} }} 
=\dgal{a,b,c}_{iv,kj,ul} - \dgal{a,c,b}_{il,uj,kv}\,,
\end{equation}
which concludes the first part. 
The second part is then obtained by taking the trace in \eqref{Eq:Rep-twIn}. 
\end{proof}

\subsection{Case of the right bimodule structure}

In this subsection, we consider double brackets on $A$ associated with the $\alpha$-twisted right bimodule structure on $A^{\otimes 2}$, see \ref{ss:E-Right} with $\beta=\alpha$.  
\begin{proposition} \label{Pr:Rep-twR}
If $\dgal{-,-}$ is a double bracket associated with the $\alpha$-twisted right bimodule structure, 
there is a unique antisymmetric bilinear operation $\br{-,-}$ on $\kk[\Rep(A,n)]$  
which is a $\alpha_n$-twisted biderivation   satisfying for any $a,b\in A$, 
\begin{equation} \label{Eq:Rep-twR}
    \br{a_{ij},b_{kl}}=\dgal{a,b}_{ij,kl}=\dgal{a,b}'_{ij}\dgal{a,b}''_{kl}\,.
\end{equation}
\end{proposition}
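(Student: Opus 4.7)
The proof will follow the same broad strategy as Proposition \ref{Pr:Rep-twOut}, which itself generalises \cite[Prop.~7.5.1]{VdB1}. The only substantive difference is in the index bookkeeping: the $\alpha$-twisted right bimodule structure feeds the second factor of $\dgal{a,b}$ into the right (resp.\ left) argument of the Leibniz rule, whereas the outer case feeds it into the left (resp.\ right) argument, so the positions of the ``free'' indices in \eqref{Eq:Rep-twR} differ from those in \eqref{Eq:Rep-twOut}.

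Uniqueness is immediate: any $\alpha_n$-twisted biderivation on $\kk[\Rep(A,n)]$ is determined by its values on the set of generators $\{a_{ij}\mid a\in A,\,1\leq i,j\leq n\}$, and those values are prescribed by \eqref{Eq:Rep-twR}.
For existence, I would define $\br{a_{ij},b_{kl}}:=\dgal{a,b}_{ij,kl}$ on generators and extend via the twisted biderivation rules; the content of the proposition is that this extension respects the defining relations of $\kk[\Rep(A,n)]$. Linearity and the unit relation $1_{ij}=\delta_{ij}$ follow from the $\kk$-bilinearity of $\dgal{-,-}$ together with $\dgal{a,1}=0$ (which itself follows from the right Leibniz rule applied to $1\cdot 1$). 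The nontrivial check is compatibility with the matrix product relation $(bc)_{kl}=\sum_u b_{ku}c_{ul}$ in each argument.

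For the second argument, I would compute directly: by \eqref{Eq:DbrR-Der1},
\begin{align*}
\dgal{a,bc}_{ij,kl}
&=\dgal{a,c}'_{ij}\,(\alpha(b)\dgal{a,c}'')_{kl}
 +\dgal{a,b}'_{ij}\,(\dgal{a,b}''\alpha(c))_{kl} \\
&=\sum_{u}\alpha_n(b_{ku})\,\dgal{a,c}_{ij,ul}
 +\sum_{u}\dgal{a,b}_{ij,ku}\,\alpha_n(c_{ul}) \\
&=\sum_{u}\bigl(\alpha_n(b_{ku})\br{a_{ij},c_{ul}}+\br{a_{ij},b_{ku}}\alpha_n(c_{ul})\bigr),
\end{align*}
which matches $\br{a_{ij},\sum_u b_{ku}c_{ul}}$ obtained from the $\alpha_n$-twisted biderivation rule in the second slot. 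The check for the first argument is symmetric, using the left Leibniz rule \eqref{Eq:DbrR-Der2}: it produces $\sum_u\bigl(\alpha_n(a_{iu})\br{c_{uj},b_{kl}}+\br{a_{iu},b_{kl}}\alpha_n(c_{uj})\bigr)$, matching $\br{\sum_u a_{iu}c_{uj},b_{kl}}$.

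Finally, antisymmetry of $\br{-,-}$ on generators is an immediate consequence of the cyclic antisymmetry \eqref{Eq:Dbr-cyc}: since $\dgal{a,b}'\otimes \dgal{a,b}''=-\dgal{b,a}''\otimes\dgal{b,a}'$, we get $\br{a_{ij},b_{kl}}=-\dgal{b,a}''_{ij}\dgal{b,a}'_{kl}=-\dgal{b,a}'_{kl}\dgal{b,a}''_{ij}=-\br{b_{kl},a_{ij}}$, where commutativity of $\kk[\Rep(A,n)]$ was used in the middle step; antisymmetry then propagates to arbitrary elements via the biderivation rules. There is no real obstacle here—the only point requiring any care is keeping the index shuffling straight in the Leibniz computation, which differs from the outer case precisely because the bimodule action is now on the \emph{second} tensor factor of $A^{\otimes 2}$.
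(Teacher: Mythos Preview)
Your proposal is correct and follows exactly the approach indicated by the paper, which simply says ``Similar to the proof of Proposition \ref{Pr:Rep-twOut}.'' You have carried out that adaptation explicitly, with the right index bookkeeping for the $\alpha$-twisted right bimodule Leibniz rules \eqref{Eq:DbrR-Der1}--\eqref{Eq:DbrR-Der2}, so there is nothing to add.
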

\begin{proof}
Similar to the proof of Proposition \ref{Pr:Rep-twOut}.
\end{proof}

\begin{remark} \label{Rem:RepMat-Right}
When $\alpha=\id_A$, we can encode the operation $\br{-,-}$ satisfying \eqref{Eq:Rep-twR} in terms of an operation on matrices $\Xtt(a)=(a_{ij})$, $a\in A$, defined through 
$$
\br{\Xtt(a)\stackrel{\otimes}{,}\Xtt(b)}
:=\sum_{1\leq i,j,k,l \leq n} 
\br{a_{ij},b_{kl}} \, E_{ij}\otimes E_{kl}
=\Xtt(\dgal{a,b}')\otimes \Xtt(\dgal{a,b}'')\,,$$
for any $a,b \in A$. This gives precisely the operation 
\eqref{Eq:Br-tens} from the introduction, which satisfies the antisymmetry 
$\br{\Xtt(a)\stackrel{\otimes}{,}\Xtt(b)}
=-\tau_{(12)}\br{\Xtt(b)\stackrel{\otimes}{,}\Xtt(a)}$ and Leibniz rules 
\eqref{Eq:Intro-tens1}--\eqref{Eq:Intro-tens2}.
\end{remark}

As in the case of the outer bimodule structure, it is possible to induce a Poisson bracket on representation spaces when $\alpha=\id_A$. In this situation, we have to consider  \emph{double $(12)$-weak Poisson brackets} instead of double Poisson brackets. The motivation stems from the fact that for double brackets associated with the right bimodule (cf. \ref{ss:E-Right}), the double Jacobiator does not behave as a derivation in each argument but the $(12)$-weak double Jacobiator does, see Lemma \ref{Lem:R-wJac}. 

\begin{theorem} \label{Thm:Rep-Right-Lie} 
If $\dgal{-,-}$ is a double $(12)$-weak Poisson bracket associated with the right bimodule structure, 
then the antisymmetric biderivation $\br{-,-}$ on $\kk[\Rep(A,n)]$ given in Proposition \ref{Pr:Rep-twR} is a Poisson bracket. 
Furthermore, the Poisson bracket descends to the ring of trace functions 
$\kk[\Rep(A,n)]^{\tr}$. 
\end{theorem}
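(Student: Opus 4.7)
The plan is to mimic the strategy of Theorem \ref{Thm:Rep-In-Lie}: verify the Jacobi identity for the antisymmetric biderivation $\br{-,-}$ of Proposition \ref{Pr:Rep-twR} on the generators $a_{ij}$, then upgrade to the whole of $\kk[\Rep(A,n)]$ by the biderivation property. After that, the descent to $\kk[\Rep(A,n)]^{\tr}$ will follow directly from a trace computation.

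The main computation is to expand $\br{a_{ij},\br{b_{kl},c_{uv}}}$ for $a,b,c\in A$. Using \eqref{Eq:Rep-twR} for $\br{b_{kl},c_{uv}}=\dgal{b,c}'_{kl}\dgal{b,c}''_{uv}$ and applying the biderivation property in the second slot followed by \eqref{Eq:Rep-twR} again, I expect to arrive at
\begin{equation*}
\br{a_{ij},\br{b_{kl},c_{uv}}}
= \bigl(\dgal{a,\dgal{b,c}}_L\bigr)_{ij,kl,uv}
+ \bigl(\tau_{(12)}\dgal{a,\dgal{b,c}}_R\bigr)_{ij,kl,uv}\,,
\end{equation*}
where the second term appears with its tensor factors in the order $(\dgal{b,c}'$, $\dgal{a,\dgal{b,c}''}$$)$ matching the index positions $(kl,ij,uv)$, which is precisely $\tau_{(12)}$ applied to $\dgal{a,\dgal{b,c}}_R$.

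Summing cyclically over $(a,b,c)$ with the corresponding cyclic rotation of index pairs (which brings in $\tau_{(123)}^s$ on the tensor side), the $L$-part assembles into the double Jacobiator $\dgal{a,b,c}_{ij,kl,uv}$ directly from \eqref{Eq:dJac}. For the $R$-part, the key identity I need is
\begin{equation*}
\tau_{(12)}\dgal{a,\dgal{b,c}}_R
+\tau_{(123)}\tau_{(12)}\dgal{b,\dgal{c,a}}_R
+\tau_{(132)}\tau_{(12)}\dgal{c,\dgal{a,b}}_R
= -\tau_{(12)}\dgal{b,a,c}\,,
\end{equation*}
which I plan to obtain by applying $\tau_{(12)}$ to the formula \eqref{Eq:dJac-R} evaluated at $(b,a,c)$ and checking that the resulting composition of permutations matches term by term. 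Combining these, the cyclic sum equals $\bigl(\dgal{a,b,c}-\tau_{(12)}\dgal{b,a,c}\bigr)_{ij,kl,uv}$, which is exactly the $(12)$-weak double Jacobiator of Definition \ref{Def:wDPA} evaluated at indices; by hypothesis it vanishes. The Jacobi identity then extends from generators to all of $\kk[\Rep(A,n)]$ because $\br{-,-}$ is a biderivation, so the cyclic Jacobi sum is itself a triderivation that is already zero on a generating set.

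The main obstacle will be the bookkeeping of the permutations $\tau_\sigma$ on $A^{\otimes 3}$ versus the tensor-slot index reshuffling in $\kk[\Rep(A,n)]$, especially identifying the alternative representation of the double Jacobiator in \eqref{Eq:dJac-R} with the right-hand $R$-sum above. For the descent statement, I will compute
\begin{equation*}
\br{\tr\Xtt(a),\tr\Xtt(b)}
=\sum_{i,k}\br{a_{ii},b_{kk}}
=\sum_{i,k}\dgal{a,b}'_{ii}\dgal{a,b}''_{kk}
=\tr\Xtt(\dgal{a,b}')\,\tr\Xtt(\dgal{a,b}'')\,,
\end{equation*}
which visibly lies in $\kk[\Rep(A,n)]^{\tr}$; combined with the biderivation property this shows that $\kk[\Rep(A,n)]^{\tr}$ is a Poisson subalgebra, completing the proof.
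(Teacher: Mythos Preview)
Your proposal is correct and follows essentially the same route as the paper: compute $\br{a_{ij},\br{b_{kl},c_{uv}}}$ as an $L$-term plus an $R$-term, sum cyclically, identify the $L$-sum with $\dgal{a,b,c}$ via \eqref{Eq:dJac} and the $R$-sum with $-\dgal{b,a,c}$ via \eqref{Eq:dJac-R}, and recognise the result as the $(12)$-weak double Jacobiator at the given indices; the trace descent is identical. The only cosmetic difference is that you normalise every term to the index order $(ij,kl,uv)$ by inserting an extra $\tau_{(12)}$ on the $R$-part, whereas the paper keeps the $R$-terms at indices $(kl,ij,uv)$ and reads off $-\dgal{b,a,c}_{kl,ij,uv}$ directly before observing that this equals $(\tau_{(12)}\dgal{b,a,c})_{ij,kl,uv}$.
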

\begin{proof}
We can directly calculate 
\begin{align*}
\br{a_{ij},\br{b_{kl},c_{uv}}}=& \br{a_{ij}, \dgal{b,c}_{kl,uv}} 
=\dgal{a,\dgal{b,c}'}_{ij,kl} \dgal{b,c}''_{uv} 
+ \dgal{a,\dgal{b,c}''}_{ij,uv} \dgal{b,c}'_{kl} \\
=&\left(\dgal{a,\dgal{b,c}}_L \right)_{ij,kl,uv}
+ \left( \dgal{a,\dgal{b,c}}_R \right)_{kl,ij,uv}\,,
\end{align*}
thanks to \eqref{Eq:Rep-twR} and the map $\dgal{-,-}_R$ \eqref{Eq:RightDmap}. 
This can be used easily to write  
\begin{align*}
\br{b_{kl},\br{c_{uv},a_{ij}}}
=&\left(\tau_{(123)}\dgal{b,\dgal{c,a}}_L \right)_{ij,kl,uv}
+ \left(\tau_{(132)} \dgal{b,\dgal{c,a}}_R \right)_{kl,ij,uv}\,, \\
\br{c_{uv},\br{a_{ij},b_{kl}}}
=&\left(\tau_{(132)}\dgal{c,\dgal{a,b}}_L \right)_{ij,kl,uv}
+ \left(\tau_{(123)} \dgal{c,\dgal{a,b}}_R \right)_{kl,ij,uv}\,.
\end{align*}
Making use of the definition of the double Jacobiator \eqref{Eq:dJac} as well as  \eqref{Eq:dJac-R}, 
we obtain for any $a,b,c\in A$ and indices in $\{1,\ldots,n\}$, 
\begin{equation}
\begin{aligned}
      &  \br{a_{ij} , \br{ b_{kl} , c_{uv} }}
+ \br{b_{kl} , \br{ c_{uv} , a_{ij} }}
+\br{c_{uv} , \br{ a_{ij}, b_{kl} }} \\
=&\dgal{a,b,c}_{ij,kl,uv} - \dgal{b,a,c}_{kl,ij,uv}
=\big(\,(12)\wk\!\dgal{a,b,c}\,\big)_{ij,kl,uv}\,,
\end{aligned}
\end{equation}
which concludes the first part. 
The second part is a consequence of $\br{\tr\Xtt(a),\tr\Xtt(b)}\in \kk[\Rep(A,n)]^{\tr}$, which follows by taking the trace in \eqref{Eq:Rep-twR}. 
\end{proof}

The operation induced on $\kk[\Rep(A,n)]^{\tr}$ by Theorem \ref{Thm:Rep-Right-Lie} is essentially the geometric counterpart of Proposition \ref{Pr:R-Ind}. 
Indeed, recall that the latter defines a Poisson bracket on $\Sym(A/[A,A])$ by extending the operation ${}_\bullet\!\dgal{-,-}_\bullet$ from Lemma \ref{Lem:R-Ind-der}. Meanwhile, we can compute that  
\begin{equation} \label{Eq:PB-R-tr}
    \br{\tr\Xtt(a),\tr\Xtt(b)}=\tr\Xtt(\dgal{a,b}')\,\tr\Xtt(\dgal{a,b}'')\,, \qquad 
    a,b\in A\,.
\end{equation} 
In that expression, the right-hand side can be equivalently obtained by taking trace of the matrices $\Xtt({}_\bullet\!\dgal{a,b}_\bullet')$ and $\Xtt({}_\bullet\!\dgal{a,b}_\bullet'')$ which represent lifts of the two components of ${}_\bullet\!\dgal{a,b}_\bullet$.  Since we take the trace of such matrices, there is no dependence on the choice of lift from $A/[A,A]$ to $A$. 
\begin{remark}
Let us note that Theorem \ref{Thm:Rep-Right-Lie} defines a quadratic Poisson bracket on $\kk[\Rep(A,n)]^{\tr}$ in view of \eqref{Eq:PB-R-tr}.
This contrasts with the linear Poisson bracket defined through  Theorem \ref{Thm:Rep-Out-Lie} in the presence of a double bracket associated with the outer bimodule structure. 
\end{remark}
 
\subsection{Case of the left bimodule structure}

We now work with double brackets on $A$ associated with the $\alpha$-twisted left bimodule structure on $A^{\otimes 2}$, see \ref{ss:E-Left} with $\beta=\alpha$. 
\begin{proposition} \label{Pr:Rep-twL}
If $\dgal{-,-}$ is a double bracket associated with the $\alpha$-twisted left bimodule structure, 
there is a unique antisymmetric bilinear operation $\br{-,-}$ on $\kk[\Rep(A,n)]$  
which is a $\alpha_n$-twisted biderivation 
satisfying for any $a,b\in A$, 
\begin{equation} \label{Eq:Rep-twL}
    \br{a_{ij},b_{kl}}=\dgal{a,b}_{kl,ij}=\dgal{a,b}'_{kl}\dgal{a,b}''_{ij}\,.
\end{equation}
\end{proposition}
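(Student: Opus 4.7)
The proof should proceed in parallel with Proposition \ref{Pr:Rep-twOut}, since the only change is in how the indices of the double bracket are contracted with the matrix indices. The plan is as follows.

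First I would note that uniqueness is immediate: the commutative ring $\kk[\Rep(A,n)]$ is generated by the symbols $a_{ij}$ for $a\in A$ and $1\leq i,j\leq n$, so an $\alpha_n$-twisted biderivation is determined by its values on pairs of generators. It then suffices to define $\br{a_{ij},b_{kl}} := \dgal{a,b}'_{kl}\dgal{a,b}''_{ij}$ on generators and extend by the twisted biderivation rules; the key point is to show that this extension is well-defined, i.e.\ compatible with the defining relations $(bc)_{kl}=\sum_{u}b_{ku}c_{ul}$, $(ac)_{ij}=\sum_u a_{iu}c_{uj}$, and $\kk$-linearity.

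For the right argument, I would compute on the one hand, using the proposed twisted biderivation rule,
\begin{equation*}
\br{a_{ij},(bc)_{kl}}
=\sum_{1\leq u\leq n}\bigl(\alpha_n(b_{ku})\br{a_{ij},c_{ul}}+\br{a_{ij},b_{ku}}\alpha_n(c_{ul})\bigr),
\end{equation*}
and on the other hand, using the right Leibniz rule \eqref{Eq:DbrL-Der1} of $\dgal{-,-}$ (with $\beta=\alpha$),
\begin{equation*}
\dgal{a,bc}'_{kl}\dgal{a,bc}''_{ij}
=(\alpha(b)\dgal{a,c}')_{kl}\dgal{a,c}''_{ij}+(\dgal{a,b}'\alpha(c))_{kl}\dgal{a,b}''_{ij}.
\end{equation*}
Expanding the matrix product in the second expression as $\sum_u$ over an intermediate index and using $\alpha(b)_{ku}=\alpha_n(b_{ku})$ (and similarly for $c$) shows that the two expressions coincide. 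An entirely analogous computation, using the left Leibniz rule \eqref{Eq:DbrL-Der2}, handles $\br{(ac)_{ij},b_{kl}}$. Compatibility with $\kk$-linearity is immediate from the $\kk$-bilinearity of $\dgal{-,-}$.

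For antisymmetry, I would use the cyclic antisymmetry \eqref{Eq:Dbr-cyc}:
\begin{equation*}
\br{a_{ij},b_{kl}}=\dgal{a,b}'_{kl}\dgal{a,b}''_{ij}
=-\dgal{b,a}''_{kl}\dgal{b,a}'_{ij}
=-\dgal{b,a}'_{ij}\dgal{b,a}''_{kl}=-\br{b_{kl},a_{ij}},
\end{equation*}
where in the middle step one uses commutativity of $\kk[\Rep(A,n)]$. The $\alpha_n$-twisted derivation rule in the first argument then follows formally from this antisymmetry combined with the already-established rule in the second argument. I do not anticipate any genuine obstacle here: the whole verification is the straightforward adaptation of the arguments giving \eqref{Eq:Rep-twOut} and \eqref{Eq:Rep-twIn}, reflecting only the fact that in the $\alpha$-twisted left bimodule structure the ``$\alpha$-twist'' lives on the \emph{first} tensor factor rather than the second.
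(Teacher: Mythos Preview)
Your proof is correct and essentially follows the template of Proposition~\ref{Pr:Rep-twOut}: you verify directly that the formula \eqref{Eq:Rep-twL} is compatible with the matrix relation $(bc)_{kl}=\sum_u b_{ku}c_{ul}$ via the Leibniz rule \eqref{Eq:DbrL-Der1}, and you deduce antisymmetry from \eqref{Eq:Dbr-cyc}. All the index bookkeeping you indicate is right.

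The paper takes a slightly different, more economical route: rather than repeating the direct verification, it observes that the swap automorphism ${}^\circ$ gives an equivalence between double brackets for the $\alpha$-twisted left and right bimodule structures, so that Proposition~\ref{Pr:Rep-twL} is an immediate reformulation of Proposition~\ref{Pr:Rep-twR} (whose proof, in turn, is ``similar to Proposition~\ref{Pr:Rep-twOut}''). Concretely, if $\dgal{-,-}$ is associated with the left bimodule then $\dgal{-,-}^\circ$ is associated with the right bimodule, and $\dgal{a,b}'_{kl}\dgal{a,b}''_{ij}=(\dgal{a,b}^\circ)'_{ij}(\dgal{a,b}^\circ)''_{kl}$, which is exactly the formula \eqref{Eq:Rep-twR} for $\dgal{-,-}^\circ$. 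Your approach has the advantage of being self-contained; the paper's approach highlights that the left case contains no new content beyond the right case.
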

This result is equivalent to Proposition \ref{Pr:Rep-twOut} using that the swap automorphism $^\circ$ induces an equivalence between double brackets associated with the $\alpha$-twisted left/right bimodule. 
As the equivalence extends to the weak Poisson case (see Theorem \ref{Thm-Equiv-LR}), we have the following analogue of Theorem \ref{Thm:Rep-Right-Lie}.

\begin{theorem} \label{Thm:Rep-Left-Lie} 
If $\dgal{-,-}$ is a double $[(12),(13)]$-weak Poisson bracket associated with the left bimodule structure, 
then the antisymmetric biderivation $\br{-,-}$ on $\kk[\Rep(A,n)]$ given in Proposition \ref{Pr:Rep-twL} is a Poisson bracket. 
Furthermore, the Poisson bracket descends to the ring of trace functions 
$\kk[\Rep(A,n)]^{\tr}$. 
\end{theorem}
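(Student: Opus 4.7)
The plan is to deduce this from the right-bimodule case (Theorem \ref{Thm:Rep-Right-Lie}) by transporting along the swap equivalence, rather than redoing the Jacobi computation. Set $\dgal{-,-}_2 := \dgal{-,-}^\circ$, which by Lemma \ref{Lem:Ind-Equiv} is a double bracket associated with the right bimodule structure; by Proposition \ref{Pr:Op-Equ-wk1}$(i)$ (i.e.\ Theorem \ref{Thm-Equiv-LR}), $\dgal{-,-}_2$ is $(12)$-weak Poisson because $\dgal{-,-}$ is $[(12),(13)]$-weak Poisson. Denote by $\br{-,-}_1$ the antisymmetric biderivation on $\kk[\Rep(A,n)]$ produced from $\dgal{-,-}$ by Proposition \ref{Pr:Rep-twL}, and by $\br{-,-}_2$ the one produced from $\dgal{-,-}_2$ by Proposition \ref{Pr:Rep-twR}.

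The key step is to observe that $\br{-,-}_1 = \br{-,-}_2$ as operations on $\kk[\Rep(A,n)]$. By the uniqueness parts of Propositions \ref{Pr:Rep-twL} and \ref{Pr:Rep-twR}, it suffices to compare the two brackets on the generators $a_{ij},b_{kl}$. Writing $\dgal{a,b}_2 = \dgal{a,b}''\otimes \dgal{a,b}'$ and using commutativity of $\kk[\Rep(A,n)]$, we get
\begin{equation*}
\br{a_{ij},b_{kl}}_1 = \dgal{a,b}'_{kl}\,\dgal{a,b}''_{ij}
= \dgal{a,b}_{2,ij,kl} = \br{a_{ij},b_{kl}}_2\,,
\end{equation*}
so the two biderivations coincide.

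Given this identification, Theorem \ref{Thm:Rep-Right-Lie} applied to $\dgal{-,-}_2$ shows that $\br{-,-}_2$ is a Poisson bracket on $\kk[\Rep(A,n)]$ which descends to $\kk[\Rep(A,n)]^{\tr}$; hence the same conclusions hold for $\br{-,-}_1$, proving the theorem.

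There is essentially no obstacle: the result is a formal consequence of the equivalence between left and right double brackets already established at the algebraic level, combined with the pointwise matching of the induced operations on $\Rep(A,n)$. The only point requiring vigilance is that the index conventions in \eqref{Eq:Rep-twL} and \eqref{Eq:Rep-twR} differ by a swap, which is precisely what cancels the swap in $\dgal{-,-}_2 = \dgal{-,-}^\circ$, so that both formulas return the same element of $\kk[\Rep(A,n)]$. An alternative direct proof, mimicking the computation in Theorem \ref{Thm:Rep-Right-Lie} via the identity \eqref{Eq:dJac-R} and the $[(12),(13)]$-weak double Jacobiator, is possible but merely recovers the same statement with strictly more bookkeeping.
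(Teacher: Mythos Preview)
Your proof is correct and takes a genuinely different route from the paper's own proof. The paper carries out the direct Jacobi computation, obtaining
\[
\br{a_{ij} , \br{ b_{kl} , c_{uv} }}
+ \br{b_{kl} , \br{ c_{uv} , a_{ij} }}
+\br{c_{uv} , \br{ a_{ij}, b_{kl} }}
=\big([(23),(12)]\wk\!\dgal{a,b,c}\big)_{uv,ij,kl}\,,
\]
and then invokes the equivalence between the $[(23),(12)]$- and $[(12),(13)]$-weak conditions noted at the end of \ref{ss:wDBR}. You instead transport the whole statement from the right-bimodule case via the swap: set $\dgal{-,-}_2=\dgal{-,-}^\circ$, note it is $(12)$-weak Poisson by Theorem \ref{Thm-Equiv-LR}, check that the two induced biderivations on $\kk[\Rep(A,n)]$ agree on generators (the swap in the index convention of \eqref{Eq:Rep-twL} versus \eqref{Eq:Rep-twR} exactly cancels the swap in $\dgal{-,-}^\circ$, using commutativity of $\kk[\Rep(A,n)]$), and then quote Theorem \ref{Thm:Rep-Right-Lie}.

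What each approach buys: the paper's direct computation is self-contained and produces an explicit formula identifying the Jacobiator on $\kk[\Rep(A,n)]$ with a specific indexed evaluation of a weak double Jacobiator, which is informative in its own right. Your argument is cleaner and more conceptual, exploiting the equivalence machinery of \ref{ss:MorEq} and \ref{ss:E-Left} so that no computation needs to be repeated; indeed, the paper's prose just before the theorem already signals that this route is available. Your final paragraph correctly identifies the one delicate point (the matching of index conventions), and your remark that a direct proof ``merely recovers the same statement with strictly more bookkeeping'' is a fair description of what the paper in fact does.
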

\begin{proof}
In a way similar to the proof of Theorem \ref{Thm:Rep-Right-Lie}, we compute 
\begin{align*}
\br{a_{ij},\br{b_{kl},c_{uv}}}
=&\left(\dgal{a,\dgal{b,c}}_L \right)_{uv,ij,kl}
+ \left( \dgal{a,\dgal{b,c}}_R \right)_{uv,kl,ij}\,.
\end{align*}
thanks to \eqref{Eq:Rep-twL} and the map $\dgal{-,-}_R$ \eqref{Eq:RightDmap}. 
This yields for any $a,b,c\in A$ and indices in $\{1,\ldots,n\}$, 
\begin{equation}
\begin{aligned}
      &  \br{a_{ij} , \br{ b_{kl} , c_{uv} }}
+ \br{b_{kl} , \br{ c_{uv} , a_{ij} }}
+\br{c_{uv} , \br{ a_{ij}, b_{kl} }} \\
=&\dgal{a,b,c}_{uv,ij,kl} - \dgal{b,a,c}_{uv,kl,ij}
=\big(\,[(23),(12)]\wk\!\dgal{a,b,c}\,\big)_{uv,ij,kl}\,,
\end{aligned}
\end{equation}
which vanishes if the $[(23),(12)]$-weak double Jacobiator is zero. As remarked at the end of \ref{ss:wDBR}, this is equivalent to the vanishing of the $[(12),(13)]$-weak double Jacobiator, which is our assumption. 
The second part easily follows. 
\end{proof}

\subsection{Morphisms on representation spaces}

In the situations presented in the previous subsections, 
a morphism of double brackets  can be extended to representation spaces.
As in \ref{ss:Exmp-Morph}, we only consider the cases of  (non-twisted) outer and right bimodule structures; the discussion for the inner and left bimodule structures is a straightforward adaptation of these cases. 

\begin{proposition} \label{Pr:Rep-Out-Mor}
Let $\phi:(A_1,\dgal{-,-}_1)\to (A_2,\dgal{-,-}_2)$ be a morphism of double brackets associated with the outer bimodule structure. 
Denote by $\br{-,-}_j$ the bilinear antisymmetric biderivation induced on $\kk[\Rep(A_j,n)]$ by Proposition \ref{Pr:Rep-twOut}, for  $j=1,2$. 
Then the induced algebra homomorphism $\phi_n:\kk[\Rep(A_1,n)]\to\kk[\Rep(A_2,n)]$ intertwines both operations, i.e. 
$$\br{\phi_n(f),\phi_n(g)}_2=\phi_n(\br{f,g}_1)\,, \quad f,g\in \kk[\Rep(A_1,n)]\,.$$
Furthermore, if both double brackets are Poisson, then $\phi_n$ is a morphism of Poisson algebras.
\end{proposition}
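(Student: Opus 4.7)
The plan is to reduce the intertwining identity to the generators $a_{ij}$ of $\kk[\Rep(A_1,n)]$ and then perform a direct computation using Proposition \ref{Pr:Rep-twOut} together with the morphism property \eqref{Eq:morph}. The extension from generators to arbitrary elements will exploit the fact that $\phi_n$ is an algebra homomorphism while both $\br{-,-}_1$ and $\br{-,-}_2$ are biderivations.

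First I would fix $a,b\in A_1$ and indices $i,j,k,l\in\{1,\ldots,n\}$. Using the definition $\phi_n(a_{ij})=\phi(a)_{ij}$, the characterisation \eqref{Eq:Rep-twOut} applied on both sides, and the morphism property $\dgal{\phi(a),\phi(b)}_2=(\phi\otimes\phi)(\dgal{a,b}_1)$, I would compute
\begin{align*}
\br{\phi_n(a_{ij}),\phi_n(b_{kl})}_2
&=\br{\phi(a)_{ij},\phi(b)_{kl}}_2
=\dgal{\phi(a),\phi(b)}_{2,kj,il}\\
&=\phi(\dgal{a,b}_1')_{kj}\,\phi(\dgal{a,b}_1'')_{il}
=\phi_n(\dgal{a,b}_{1,kj,il})
=\phi_n(\br{a_{ij},b_{kl}}_1).
\end{align*}
This settles the intertwining identity on generators.

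To propagate the identity to arbitrary $f,g\in\kk[\Rep(A_1,n)]$, I would argue by induction on the degree in the generating symbols. Since $\phi_n$ is a ring homomorphism and each $\br{-,-}_j$ is a biderivation (the non-twisted case of Proposition \ref{Pr:Rep-twOut}, with $\alpha=\id_A$ so $\alpha_n=\id$), the two sides of the desired identity
\begin{equation*}
\br{\phi_n(f),\phi_n(g)}_2=\phi_n(\br{f,g}_1)
\end{equation*}
depend bilinearly on $(f,g)$ and satisfy the same Leibniz rule in each argument once the identity holds for smaller-degree monomials; a routine induction then closes the argument. Antisymmetry of both operations makes the left and right biderivation rules automatically compatible.

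For the Poisson case, the intertwining property implies that $\phi_n$ is automatically a morphism of Poisson algebras: Jacobi identity for both sides transports under an algebra homomorphism that intertwines the brackets. More concretely, if both $\dgal{-,-}_j$ are double Poisson, then by Theorem \ref{Thm:Rep-Out-Lie} each $\br{-,-}_j$ is a Poisson bracket on $\kk[\Rep(A_j,n)]$, and the intertwining identity just established upgrades $\phi_n$ to a morphism of Poisson algebras. I do not anticipate any genuine obstacle; the only mild care needed is to keep the index bookkeeping in \eqref{Eq:Rep-twOut} straight (note the transposition of indices between $(ij,kl)$ and $(kj,il)$), which is where an arithmetic slip could occur.
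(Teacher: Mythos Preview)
Your proposal is correct and follows essentially the same approach as the paper: reduce to generators using that $\phi_n$ is an algebra homomorphism and the $\br{-,-}_j$ are biderivations, then verify the intertwining identity on generators via \eqref{Eq:Rep-twOut} and \eqref{Eq:morph}, and invoke Theorem \ref{Thm:Rep-Out-Lie} for the Poisson part. The paper's proof is simply more terse, omitting the explicit induction you sketch for the extension step.
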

\begin{proof}
Since the operations $\br{-,-}_j$ are antisymmetric biderivations and $\phi_n$ is a morphism, it suffices to show the claim on generators. For any $a,b\in A_1$ and $1\leq i,j,k,l \leq n$, we have by \eqref{Eq:Rep-twOut} and the definition of morphism of double brackets that 
\begin{equation*}
    \br{\phi_n(a_{ij}),\phi_n(b_{kl})}_2
=(\dgal{\phi(a),\phi(b)}_2)_{kj,il}
=\big(\phi^{\otimes 2}(\dgal{a,b}_1)\big)_{kj,il}
=\phi_n \left( \br{ a_{ij} , b_{kl}}_1\right)\,.
\end{equation*}
The second part follows because the operations $\br{-,-}_j$ are Poisson brackets by Theorem \ref{Thm:Rep-Out-Lie}. 
\end{proof}
In the Poisson case, we can go a step further and use Theorem \ref{Thm:Rep-Out-Lie} to obtain a morphism of Poisson algebras $\kk[\Rep(A_1,n)]^{\tr}
\to \kk[\Rep(A_2,n)]^{\tr}$ by restricting $\phi_n$ to trace functions. An alternative proof of this result, based on $H_0$-Poisson structures, can be found in \cite[\S5.1]{F22}.

\begin{proposition} \label{Pr:Rep-Right-Mor}
Let $\phi:(A_1,\dgal{-,-}_1)\to (A_2,\dgal{-,-}_2)$ be a morphism of double brackets associated with the right bimodule structure. Denote by $\br{-,-}_j$ the bilinear antisymmetric biderivation induced on $\kk[\Rep(A_j,n)]$ by Proposition \ref{Pr:Rep-twR}, for $j=1,2$. 
Then the induced algebra homomorphism $\phi_n:\kk[\Rep(A_1,n)]\to\kk[\Rep(A_2,n)]$ intertwines both operations, i.e. 
$$\br{\phi_n(f),\phi_n(g)}_2=\phi_n(\br{f,g}_1)\,, \quad f,g\in \kk[\Rep(A_1,n)]\,.$$
Furthermore, if both double brackets are $(12)$-weak Poisson, then $\phi_n$ is a morphism of Poisson algebras.
\end{proposition}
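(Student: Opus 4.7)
The strategy is to mirror the proof of Proposition \ref{Pr:Rep-Out-Mor}, only swapping in the right-bimodule-flavoured index convention \eqref{Eq:Rep-twR} in place of the outer one \eqref{Eq:Rep-twOut}. Since $\phi_n$ is by definition an algebra homomorphism and each $\br{-,-}_j$ is a biderivation satisfying antisymmetry, the intertwining identity $\br{\phi_n(f),\phi_n(g)}_2=\phi_n(\br{f,g}_1)$ propagates from generators of $\kk[\Rep(A_1,n)]$ to arbitrary elements by the usual biderivation/bilinearity argument. Hence the first step is to reduce the claim to verification on pairs of generators $a_{ij}$, $b_{kl}$ with $a,b\in A_1$ and $1\leq i,j,k,l\leq n$.

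On such generators, I would compute directly, using \eqref{Eq:Rep-twR}, the definition $\phi_n(a_{ij})=\phi(a)_{ij}$, and the morphism property $\dgal{\phi(a),\phi(b)}_2=(\phi\otimes\phi)(\dgal{a,b}_1)$ from Definition \ref{Def:Morph}:
\begin{equation*}
\br{\phi_n(a_{ij}),\phi_n(b_{kl})}_2
=\br{\phi(a)_{ij},\phi(b)_{kl}}_2
=\dgal{\phi(a),\phi(b)}_{2,\,ij,kl}
=\bigl(\phi^{\otimes 2}\dgal{a,b}_1\bigr)_{ij,kl}.
\end{equation*}
Since $\bigl(\phi^{\otimes 2}\dgal{a,b}_1\bigr)_{ij,kl}=\phi(\dgal{a,b}_1')_{ij}\,\phi(\dgal{a,b}_1'')_{kl}=\phi_n(\dgal{a,b}_1'{}_{ij})\,\phi_n(\dgal{a,b}_1''{}_{kl})=\phi_n(\dgal{a,b}_{1,\,ij,kl})$, this equals $\phi_n(\br{a_{ij},b_{kl}}_1)$ by \eqref{Eq:Rep-twR} applied to $\dgal{-,-}_1$. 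This establishes the intertwining on generators and hence, by step one, on all of $\kk[\Rep(A_1,n)]$.

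For the second assertion, when both double brackets are $(12)$-weak Poisson, Theorem \ref{Thm:Rep-Right-Lie} already upgrades each $\br{-,-}_j$ from an antisymmetric biderivation to an honest Poisson bracket on $\kk[\Rep(A_j,n)]$. Combined with the intertwining just proved, this immediately yields that $\phi_n$ is a morphism of Poisson algebras.

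I do not anticipate a genuine obstacle: the entire content is a bookkeeping check that the index arrangement in \eqref{Eq:Rep-twR} is compatible with the morphism condition, and the Poisson upgrade is outsourced to Theorem \ref{Thm:Rep-Right-Lie}. The only point to watch is not to confuse the right-bimodule index pattern $(ij,kl)$ with the outer-bimodule pattern $(kj,il)$ used in Proposition \ref{Pr:Rep-Out-Mor}; but since both patterns interact with $\phi^{\otimes 2}$ in exactly the same multiplicative way, the proof goes through unchanged.
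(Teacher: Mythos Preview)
Your proposal is correct and follows exactly the approach the paper takes: the paper's proof simply states that it is similar to Proposition~\ref{Pr:Rep-Out-Mor}, using Proposition~\ref{Pr:Rep-twR} and Theorem~\ref{Thm:Rep-Right-Lie} in place of their outer-bimodule analogues. You have in fact written out the details more explicitly than the paper does.
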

\begin{proof}
The proof is similar to Proposition \ref{Pr:Rep-Out-Mor}; 
we simply need to use Proposition \ref{Pr:Rep-twR} and Theorem \ref{Thm:Rep-Right-Lie} in the right bimodule case. 
\end{proof}


\section{Comparison between the main double brackets} \label{S:Comp}

It seems important to outline the differences and similarities between the four main sources of examples of double brackets which are associated with the outer, inner, right and left bimodule structures. 
In fact, we have already seen in Section \ref{S:Exmp} that the case of the inner bimodule (resp. the left bimodule) is equivalent to the case of the outer bimodule (resp. the right bimodule). 
Therefore, it only seems necessary to compare the double brackets associated with the outer bimodule structure, and those associated with the right bimodule structure.
We gather their main differences in the following list, where we omit the obvious difference between the Leibniz rules \eqref{Eq:DbrOut-Der1}--\eqref{Eq:DbrOut-Der2} (for the outer case where $\alpha=\beta=\id_A$) and 
\eqref{Eq:DbrR-Der1}--\eqref{Eq:DbrR-Der2} (for the right case where $\alpha=\beta=\id_A$). 

\medskip

\begin{enumerate}
    \item (\emph{Double Jacobi identity}.) 
With the outer bimodule, the double Jacobiator \eqref{Eq:dJac} is a derivation in each argument by Lemma \eqref{Lem:OutJac}.  
With the right bimodule, the $\sigma$-weak double Jacobiator \eqref{Eq:wdJac} with $\sigma=(12)$ is a derivation in each argument by Lemma \ref{Lem:R-wJac}. 
Such derivation rules are \emph{not} satisfied in general by the double Jacobiator in the right bimodule case, or by the $(12)$-weak double Jacobiator in the outer bimodule case. 

\item (\emph{Induced maps}.) 
With the outer bimodule, we get a well-defined map $\br{-,-}_{\mult}:A \times A \to A$ by multiplication of the two factors. This map descends to an antisymmetric operation on $A/[A,A]$, where it becomes a Lie bracket if the double bracket is Poisson, see Lemma \ref{Lem:Out-Ind-2} and Proposition \ref{Pr:Out-Lod}. 
With the right bimodule, we get a well-defined map 
${}_\bullet\!\dgal{-,-}_\bullet:(A/[A,A])^{\times 2} \to (A/[A,A])^{\otimes 2}$ which can be extended to a Poisson bracket on $\Sym(A/[A,A])$ if the double bracket is $(12)$-weak Poisson, see Lemma \ref{Lem:R-Ind-der} and Proposition  \ref{Pr:R-Ind}. 
For nontrivial double brackets, we can not define the operation  $\br{-,-}_{\mult}$ in the right bimodule case, or the operation ${}_\bullet\!\dgal{-,-}_\bullet$ in the outer bimodule case. 

\item (\emph{Abelianization}.) 
With the right bimodule, any double bracket on an algebra $A$ descends to its abelianization $A^{\ab}$ by Lemma \ref{Lem:Dbr-Ab-R}. 
This does not hold for the outer bimodule except in trivial cases.

\item (\emph{Example of application of }1\! \&\! 3.) 
Fix $r\geq2$ and let $\mathcal{P}_r=\kk[x_1,\ldots,x_r]$ be the commutative polynomial ring in $r$ variables. 
With the outer bimodule, it was shown by Powell \cite{P16} that the only double Poisson bracket on $\mathcal{P}_r$ is the zero double bracket.  
With the right bimodule, any double $(12)$-weak Poisson bracket on $\kk\langle x_1,\ldots,x_r\rangle$ descends to a double $(12)$-weak Poisson bracket on $\mathcal{P}_r$. 
As a non-trivial family of examples, any choice of constants $(\lambda_{ij})_{1\leq i,j\leq r}$ with $\lambda_{ij}=-\lambda_{ji}$ defines a double $(12)$-weak Poisson bracket by setting $\dgal{x_i,x_j}=\lambda_{ij}\,1\otimes 1$ thanks to Lemma \ref{Lem:wJac2}. 

\item (\emph{Geometric differences}.) 
In both the outer and right bimodule cases, we can obtain an antisymmetric bilinear operation on representation spaces thanks to Propositions \ref{Pr:Rep-twOut} and \ref{Pr:Rep-twR}. 
However, these operations on representation spaces are defined differently as can be seen from \eqref{Eq:Rep-twOut} and \eqref{Eq:Rep-twR}.

\item (\emph{Geometric differences (bis)}.) 
At the level of representation spaces, we can define on matrix-valued functions of the form $\Xtt(a)$ the operation $\dgal{-,-}_{(n)}$ from Remark \ref{Rem:RepMat-Out} with the outer bimodule, or the operation 
$\br{-\stackrel{\otimes}{,}-}$ from Remark \ref{Rem:RepMat-Right} with the right bimodule.  
As noticed in the Introduction, we can go back and forth between one notation or the other. However, both operations will \emph{not} necessarily be induced by a corresponding double bracket. 

\item (\emph{Example of application of }4\! \&\! 6.) 
There is a double $(12)$-weak Poisson bracket on $\mathcal{P}_2$ associated with the right bimodule structure which is determined by 
$\dgal{x,x}=0=\dgal{y,y}$ and $\dgal{x,y}= 1\otimes 1$. 
It gives rise to a unique Poisson bracket on $\Rep(\mathcal{P}_2,n)$, $n \geq 1$, by Theorem \ref{Thm:Rep-Right-Lie}. Using the notation of Remark \ref{Rem:RepMat-Right}, the Poisson structure on $\Rep(\mathcal{P}_2,n)$ is such that  
$$\br{\Xtt(x)\stackrel{\otimes}{,}\Xtt(x)}=0\,, \quad \br{\Xtt(y)\stackrel{\otimes}{,}\Xtt(y)}=0\,,\quad 
\br{\Xtt(x)\stackrel{\otimes}{,}\Xtt(y)}= \Id_n \otimes \Id_n\,.$$
Alternatively, it can be encoded into the operation $\dgal{-,-}_{(n)}$ \eqref{Eq:Br-VdB} satisfying the rules \eqref{Eq:Intro-VdB}:  
\begin{equation} \label{Eq:Rep-Comp}
    \dgal{\Xtt(x),\Xtt(x)}_{(n)}=0\,, \quad 
\dgal{\Xtt(y),\Xtt(y)}_{(n)}=0\,, \quad 
\dgal{\Xtt(x),\Xtt(y)}_{(n)}=\sum_{1\leq i,j \leq n} E_{ij}\otimes E_{ji}\,.
\end{equation}
There is no nonzero double Poisson bracket on $\mathcal{P}_2$, so the operation $\dgal{-,-}_{(n)}$ satisfying \eqref{Eq:Rep-Comp} is \emph{not} induced by a double Poisson bracket (after combining Proposition \ref{Pr:Rep-twOut} and Remark \ref{Rem:RepMat-Out}). 
\end{enumerate}


\appendix

\section{Case study: Gradient double Poisson algebras}  \label{S:Case}

Consider the \emph{commutative} polynomial algebra in $3$ variables $\mathcal{P}_3=\kk[x_1,x_2,x_3]$, and for $1\leq j \leq 3$ denote by $\partial_j$ the standard derivation on $\mathcal{P}_3$ satisfying $\partial_j(x_k)=\delta_{jk}$.  
Denote by $(\epsilon^{ijk})_{1\leq i,j,k\leq 3}$ the totally antisymmetric $3$-tensor satisfying $\epsilon^{123}=1$. 
 Fix an element $\varphi\in \mathcal{P}_3$. 
It is well-known (see e.g. \cite[\S9.2]{LGPV}) that we have a Poisson bracket $\br{-,-}_{\varphi}$ on $\mathcal{P}_3$ which is given on generators through 
\begin{equation}
\br{x_i,x_j}_{\varphi}=\sum_{1\leq k \leq 3} \epsilon^{ijk} \partial_k(\varphi)\,, 
\quad 1\leq i,j \leq 3\,.
\end{equation}
Furthermore, $\varphi$ is a Casimir of the Poisson bracket, that is $\br{\varphi,-}_{\varphi}$ acts as the zero derivation on $\mathcal{P}_3$. Such a Poisson bracket is called \emph{gradient}, since the components of the corresponding bivector in the basis $\{\partial_2\wedge \partial_3,\partial_3\wedge \partial_1,\partial_1\wedge \partial_2\}$ are given by $\{\partial_1(\varphi),\partial_2(\varphi),\partial_3(\varphi)\}$.

In the rest of this appendix, we  start a classification of double Poisson brackets associated with the outer bimodule structure (as originally introduced by Van den Bergh \cite{VdB1}, see \ref{ss:E-Out}) which generalise gradient Poisson brackets. 
This illustrates the differences between (commutative) Poisson brackets and (associative) double Poisson brackets for readers who have a limited knowledge of the latter. 

\subsection{Conventions and main results}

We set $A=\kk\langle x_1,x_2,x_3\rangle$. 
We equip $A$ with a grading $|-|$ by setting $|x_j|=1$ for $1\leq j \leq 3$ and $|\kk|=0$. Hence, a homogeneous element $f\in A$ of degree $d$ is of the form 
\begin{equation} \label{Eq:f-decomp}
f=\sum_I \gamma_I \, x_{i_1}\ldots x_{i_d},\quad 
\gamma_I \in \kk, \quad I=(i_1,\ldots,i_d)\in \{1,2,3\}^{\times d}\, .
\end{equation}
In that case, we say that (the homogeneous element of degree $d$) $f\in A$ is \emph{fully non-commutative} if it is $S_d$-invariant: given the decomposition \eqref{Eq:f-decomp}, we can also write for any $\sigma \in S_d$ that 
\begin{equation} \label{Eq:f-decomp-bis}
f=\sum_I \gamma_I \, x_{i_{\sigma(1)}}\ldots x_{i_{\sigma(d)}}\,.
\end{equation} 
More generally, we can decompose any nonzero $f\in A$ as $f=\sum_{w=0}^d f_w$ where  $d\geq 0$ and $|f_w|=w$. We then say that $f\in A$ is \emph{fully non-commutative} if each homogeneous part $f_w$, $0\leq w \leq d$, is fully non-commutative. 
We note that the grading extends inductively from $A$ to $A^{\otimes n}$, $n \geq 2$, by setting $|f\otimes g|=|f|+|g|$ if $f\in A$ and $g\in A^{\otimes (n-1)}$ are homogeneous. 

We endow $A^{\otimes 2}$ with the outer $\cdot := \cdot_{out}$ and inner $\ast := \cdot_{in}$ bimodule structures given in \eqref{Eq:Mod-out}--\eqref{Eq:Mod-in}. 
The additive group of double derivations on $A$ is defined as 
$$\DDer(A)=\{\delta \in \Hom_\kk(A,A^{\otimes 2}) \mid \delta(ab)=a\cdot \delta(b)+\delta(a)\cdot b\}\,.$$
For any $1\leq j \leq 3$, we consider the double derivation $\delh_j\in \DDer(A)$ uniquely defined by  
\begin{equation}
    \delh_j(x_k)=\delta_{jk} \, 1\otimes 1\,, \qquad 1\leq k \leq 3\,.
\end{equation}
We have that $\delh_j$ has degree $-1$ if we endow $A$ and $A^{\otimes 2}$ with the grading introduced above. 
Hereafter, by a double bracket we mean a double bracket associated with the outer bimodule structure on $A^{\otimes 2}$. In other words, a double bracket $\dgal{-,-}:A^{\times 2}\to A^{\otimes 2}$ is as in Definition \ref{Def:Dbr} with $\cdot$ and $\ast$ being the outer and inner bimodule structures on $A^{\otimes 2}$ respectively, as specified above. 

\begin{problem} \label{Prob:A}
Classify all the double Poisson brackets on $A$ which satisfy 
\begin{equation} \label{Eq:ProbA}
\dgal{x_i,x_j}_{f}=\sum_{1\leq k \leq 3} \epsilon^{ijk} \delh_k(f)\,, 
\quad 1\leq i,j \leq 3\,,
\end{equation}
for some $f\in A$. Such pairs $(A,\dgal{-,-}_f)$ are called \emph{gradient double Poisson algebras}. 
\end{problem}

Thanks to the choice of bimodule structures taken above, we can evaluate any double bracket satisfying \eqref{Eq:ProbA} on monomials: given $a=x_{i_1}\ldots x_{i_m}\in A$ and $b=x_{j_1}\ldots x_{j_n}\in A$, we have  
\begin{align*}
    \dgal{a,b}_f&= 
\sum_{\mu=1}^m \sum_{\nu=1}^n 
\sum_{1\leq k \leq 3} \epsilon^{i_\mu j_\nu k}\,\,
x_{i_1}\ldots x_{i_{\mu-1}} \ast x_{j_1}\ldots x_{j_{\nu-1}} \cdot 
\delh_k(f)
\cdot x_{j_{\nu+1}}\ldots x_{j_n}\ast x_{i_{\mu+1}}\ldots x_{i_m} \\
&=\sum_{\mu=1}^m \sum_{\nu=1}^n 
\sum_{1\leq k \leq 3} \epsilon^{i_\mu j_\nu k}\,\,
x_{j_1}\ldots x_{j_{\nu-1}} 
\delh_k(f)' 
x_{i_{\mu+1}}\ldots x_{i_m}
\otimes 
x_{i_1}\ldots x_{i_{\mu-1}}
\delh_k(f)''
x_{j_{\nu+1}}\ldots x_{j_n}  \,.
\end{align*}
(We follow the convention that an empty product equals to $+1$, e.g. $x_{i_1}\ldots x_{i_{\mu-1}}=1\in A$ when $\mu=1$.)
We directly obtain the double bracket of any two elements of $A$ by $\kk$-bilinearity.

Our progress on tackling Problem \ref{Prob:A} is presented as part of the following results, which are proven in \ref{ss:App-Pf}. As an application, we gather several (counter-)examples in \ref{ss:App-Ex}.  

\begin{proposition} \label{Pr:Prob-anti}
An element $f\in A$ is such that \eqref{Eq:ProbA} defines a double bracket if and only if it is fully non-commutative. 
\end{proposition}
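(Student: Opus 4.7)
The plan is to reduce the statement to a purely combinatorial invariance of coefficients under a group $G$ acting on index sequences of length $d$, and then identify $G$ with the symmetric group $S_d$.

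By Lemma \ref{Lem:Gen} and the fact that the outer bimodule is swap-commuting, once the values $\dgal{x_i,x_j}_f$ are fixed on generators, the Leibniz rules of Definition \ref{Def:Dbr} extend $\dgal{-,-}_f$ uniquely to $A$. The only obstruction to \eqref{Eq:ProbA} defining a double bracket is therefore the cyclic antisymmetry \eqref{Eq:Dbr-cyc} on generators. Substituting \eqref{Eq:ProbA} and using $\epsilon^{jik}=-\epsilon^{ijk}$, the requirement $\dgal{x_i,x_j}_f+\dgal{x_j,x_i}_f^\circ=0$ becomes
\[ \sum_{k}\epsilon^{ijk}\bigl(\hat{\partial}_k(f)-\hat{\partial}_k(f)^\circ\bigr)=0 \qquad\text{for all } i,j. \]
For $i\neq j$ the unique nonzero $\epsilon^{ijk}$ has $k\in\{1,2,3\}\setminus\{i,j\}$, and all three values of $k$ arise this way, so the condition collapses to $\hat{\partial}_k(f)=\hat{\partial}_k(f)^\circ$ for every $k\in\{1,2,3\}$. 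Writing $f=\sum_I\gamma_I\,x_{i_1}\cdots x_{i_d}$ and expanding each $\hat{\partial}_k$ monomial by monomial, this symmetry is in turn equivalent to the coefficient identities
\[ \gamma_{(A,k,B)}=\gamma_{(B,k,A)} \]
for all finite sequences $A,B$ in $\{1,2,3\}^{*}$ and all $k\in\{1,2,3\}$. The ``if'' direction is then immediate: if $f$ is fully non-commutative, $\gamma_I$ depends only on the multiset of entries of $I$, and both sides above involve sequences with the same multiset.

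For the ``only if'' direction, I would introduce for each $s\in\{1,\ldots,d\}$ the pivot $T_s$ acting on sequences of length $d$ by $(i_1,\ldots,i_d)\mapsto(i_{s+1},\ldots,i_d,i_s,i_1,\ldots,i_{s-1})$. The coefficient identities above are precisely the invariance of $\gamma$ under every $T_s$, so it suffices to show that the subgroup $G\leq S_d$ generated by $T_1,\ldots,T_d$ equals $S_d$. Since adjacent transpositions generate $S_d$ and the cyclic shift $T_1$ is one of the generators of $G$, it is enough to exhibit a single adjacent transposition as a product of pivots; conjugation by powers of $T_1$ then produces all remaining adjacent transpositions. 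The crucial computation is
\[ T_1^{-2}\circ T_2 \,:\, (i_1,i_2,i_3,\ldots,i_d) \longmapsto (i_2,i_1,i_3,\ldots,i_d), \]
verified by two applications of $T_1^{-1}$ to $T_2(i_1,\ldots,i_d)=(i_3,\ldots,i_d,i_2,i_1)$.

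The main obstacle is locating this particular combination $T_1^{-2}\circ T_2$: since each $T_s$ individually looks like a ``cyclic rotation around the $s$-th entry'', the naive expectation is only invariance of $\gamma$ under the cyclic group $C_d$, and it is not obvious a priori that $G$ contains any non-cyclic element. Once this adjacent transposition is in hand, the remaining steps, namely the monomial expansion of $\hat{\partial}_k$ and the translation of $G=S_d$ back into full non-commutativity of $f$, are routine.
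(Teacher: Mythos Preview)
Your proof is correct and follows essentially the same approach as the paper's. Both reduce the cyclic antisymmetry to the condition $\hat\partial_k(f)=\hat\partial_k(f)^\circ$ for each $k$, translate this into invariance of the coefficients under the pivot operations $T_s:(i_1,\ldots,i_d)\mapsto(i_{s+1},\ldots,i_d,i_s,i_1,\ldots,i_{s-1})$, and then observe that $T_1$ is a full cycle while $T_1^{-2}\circ T_2$ is the adjacent transposition $(1\ 2)$, so the group generated is $S_d$; the paper phrases this last step as ``$w=2$ followed by the cyclic permutation swaps the first two factors,'' which is the same computation in words.
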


\begin{lemma}\label{Lem:Prob-deg}
Decompose $f\in A\setminus \{0\}$ with respect to the grading on $A$ as $f=\sum_{w=0}^d f_w$ where $|f_w|=w$ and $d\geq 0$. If $\dgal{-,-}_f$ is a double Poisson bracket, then so is $\dgal{-,-}_{f_d}$. 
\end{lemma}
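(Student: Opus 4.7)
The plan is to exploit the word-length grading $|-|$ on $A$, extended termwise to $A^{\otimes n}$. Since $\delh_k$ sends each $x_l$ to $\delta_{kl}\,1\otimes 1$, it lowers degree by $1$; a straightforward induction along the Leibniz rules \eqref{Eq:DbrOut-Der1}--\eqref{Eq:DbrOut-Der2} then yields that, for any homogeneous $a,b\in A$ and any homogeneous summand $f_w$ of $f$, the element $\dgal{a,b}_{f_w}\in A^{\otimes 2}$ is homogeneous of degree $|a|+|b|+w-3$ (with the convention that zero has any degree). In particular $\dgal{x_i,x_j}_{f_w}$ is concentrated in degree $w-1$. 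Each $f_w$ is fully non-commutative because $f$ is, so Proposition \ref{Pr:Prob-anti} guarantees that every $\dgal{-,-}_{f_w}$ is itself a well-defined double bracket.

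Next I would observe that the double Jacobiator \eqref{Eq:dJac} is \emph{bilinear} in the defining element: each of its three summands is the composition of two applications of the bracket, and each application is $\kk$-linear in $f$. Expanding $f=\sum_{w=0}^d f_w$ in both slots therefore yields
\begin{equation*}
\dgal{-,-,-}_f \,=\, \sum_{0\leq w,v\leq d}\,J_{w,v}(-,-,-)\,,
\end{equation*}
where
\begin{equation*}
J_{w,v}(a,b,c):=\dgal{a,\dgal{b,c}_{f_w}}_{f_v,L}+\tau_{(123)}\dgal{b,\dgal{c,a}_{f_w}}_{f_v,L}+\tau_{(132)}\dgal{c,\dgal{a,b}_{f_w}}_{f_v,L}\,,
\end{equation*}
so that in particular $J_{w,w}=\dgal{-,-,-}_{f_w}$. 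Combining with the degree count above, $J_{w,v}(x_i,x_j,x_k)\in A^{\otimes 3}$ is homogeneous of degree $w+v-3$.

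Assume now $\dgal{-,-,-}_f\equiv 0$. Evaluating on any triple of generators and using that $A^{\otimes 3}$ is graded, each graded piece must vanish independently. Among the pairs $(w,v)$ with $0\leq w,v\leq d$, the sum $w+v=2d$ is attained only at $(w,v)=(d,d)$; isolating the degree-$(2d-3)$ piece thus forces
\begin{equation*}
\dgal{x_i,x_j,x_k}_{f_d}=J_{d,d}(x_i,x_j,x_k)=0\,,\qquad 1\leq i,j,k\leq 3\,.
\end{equation*}
Since $\dgal{-,-}_{f_d}$ is a double bracket associated with the (untwisted) outer bimodule structure, Lemma \ref{Lem:OutJac} shows that its double Jacobiator is a derivation in each of its three arguments, and Lemma \ref{Lem:Jac} then promotes this vanishing on generators to full vanishing, so that $\dgal{-,-}_{f_d}$ is a double Poisson bracket.

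The only mildly delicate step is carefully tracking how the word-length grading interacts with the Leibniz rules to establish the degree formula $|a|+|b|+w-3$; beyond that the argument is a clean top-degree extraction. It is worth noting that a similar argument will \emph{not} directly descend to the lower homogeneous pieces $f_w$ with $w<d$, since the degree-$(2w-3)$ piece of $\dgal{x_i,x_j,x_k}_f$ also collects off-diagonal cross terms $J_{w',v'}$ with $w'+v'=2w$, and these need not vanish individually.
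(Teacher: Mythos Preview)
Your argument is correct and follows essentially the same route as the paper: decompose $f$ into homogeneous parts, use Proposition~\ref{Pr:Prob-anti} to see each $\dgal{-,-}_{f_w}$ is a genuine double bracket, track degrees to isolate the top piece $J_{d,d}=\dgal{-,-,-}_{f_d}$ on generators, and conclude. You are slightly more explicit than the paper in writing out the bilinear expansion into cross terms $J_{w,v}$ and in invoking Lemmas~\ref{Lem:OutJac} and~\ref{Lem:Jac} to pass from vanishing on generators to full vanishing, but these are precisely the ingredients the paper is using implicitly.
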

\begin{theorem} \label{Thm:ProbRes}
We have that $\dgal{-,-}_f$ is a double Poisson bracket if $f\in A$ is such that 
\begin{enumerate}
    \item[(i)] $f=x_j^d$ for some $j\in\{1,2,3\}$ and $d\geq 0$; 
    \item[(ii)] $f=(x_1+x_2+x_3)^d$ for some $d\geq 0$. 
\end{enumerate}
Furthermore, we can take linear combinations in each case, that is we also have a double Poisson bracket if $f=\sum_{w=0}^d \gamma_w x_j^w$ or if $f=\sum_{w=0}^d \gamma_w (x_1+x_2+x_3)^w$ with $\gamma_0,\ldots,\gamma_d \in \kk$. 
In particular, we have $\dgal{f,-}_f=0$ in all these cases. 
\end{theorem}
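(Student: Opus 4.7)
The plan is to check full non-commutativity of $f$ to invoke Proposition \ref{Pr:Prob-anti}, and then to establish the double Jacobi identity by exploiting a Casimir-like vanishing that makes the double Jacobiator trivially zero on generators. Both $x_j^d$ and $(x_1+x_2+x_3)^d$ are fully non-commutative: the former trivially, while the expansion of the latter has coefficient $1$ on every monomial in $\{x_1, x_2, x_3\}^d$, which is manifestly $S_d$-invariant. Linear combinations preserve the property, so Proposition \ref{Pr:Prob-anti} yields a double bracket in every case; by Lemmas \ref{Lem:OutJac} and \ref{Lem:Jac} it then suffices to verify that the double Jacobiator vanishes on all triples of generators $(x_i, x_j, x_k)$.

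For case (i) with $f = \sum_w \gamma_w x_j^w$, I will use $\delh_l(f) = \delta_{lj}\sum_w \gamma_w \sum_{t} x_j^t \otimes x_j^{w-1-t}$ to write $\dgal{x_a, x_b}_f = \epsilon^{abj}\, F$, with $F \in A^{\otimes 2}$ a polynomial in $x_j$ on both tensor factors. Since $\epsilon^{jbj} = 0 = \epsilon^{ajj}$, this forces $\dgal{x_j, -}_f = 0$ on generators, hence on all of $A$ by the right Leibniz rule; the left Leibniz rule then gives $\dgal{x_j^t, -}_f = 0$ for every $t$, and in particular $\dgal{f, -}_f = 0$. Each summand of the double Jacobiator on a triple of generators is then of the form $\dgal{x_i, F'}_L$ with $F'$ built out of powers of $x_j$, which vanishes because $\dgal{x_i, x_j^t}_f = 0$.

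For case (ii), let $s = x_1 + x_2 + x_3$ and $f = \sum_w \gamma_w s^w$. Since $\delh_k(s) = 1\otimes 1$ is independent of $k$, so is $\delh_k(f) =: D$, giving the factorization $\dgal{x_i, x_j}_f = c_{ij}\, D$ with $c_{ij} := \sum_k \epsilon^{ijk}$. The key combinatorial identity $\sum_{i} c_{ij} = \sum_{i,k} \epsilon^{ijk} = 0$ (an antisymmetric double sum) then yields $\dgal{s, x_j}_f = 0$, hence $\dgal{s, -}_f = 0$ on all of $A$, from which $\dgal{f, -}_f = 0$ and in particular $\dgal{x_i, s^t}_f = 0$ for every $i$ and $t$. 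Each summand $\dgal{x_i, \dgal{x_j, x_k}_f}_L = c_{jk}\, \dgal{x_i, D}_L$ in \eqref{Eq:dJac} therefore vanishes, together with its two cyclic permutations.

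I do not anticipate any genuine obstacle: the two arguments are structurally parallel, each resting on a factorization $\dgal{x_a, x_b}_f = \lambda_{ab}\, D$ in which $D$ is built from an element annihilated by $\dgal{x_i, -}_f$. The most delicate point will be the antisymmetric cancellation $\sum_{i,k} \epsilon^{ijk} = 0$ underpinning case (ii); once this is observed, all three cyclic contributions to the double Jacobiator collapse simultaneously, and the extension to linear combinations is free because the factorizations $\dgal{x_a, x_b}_f = \epsilon^{abj} F$ (case (i)) and $\dgal{x_i, x_j}_f = c_{ij} D$ (case (ii)) depend linearly on $f$.
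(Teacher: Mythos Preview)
Your proposal is correct and follows essentially the same route as the paper. The paper organises case (ii) via two auxiliary lemmas---first proving $\dgal{s,-}_{f_d}=0$ from the $k$-independence of $\delh_k(f_d)$, then deducing $\dgal{x_i,\dgal{x_j,x_k}_{f_d}}_{f_p,L}=0$ for all $d,p$---and handles linear combinations by bilinearity across these; you instead work directly with $f=\sum_w\gamma_w s^w$ and obtain the same vanishing in one pass, while also spelling out case (i), which the paper leaves to the reader.
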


\subsection{Working on the classification} \label{ss:App-Pf}

\subsubsection{Proof of Proposition \ref{Pr:Prob-anti}}

We first note that the cyclic antisymmetry of the double bracket is equivalent to showing that for any $j,k=1,2,3$ distinct, we have $\dgal{x_j,x_k}_f=-\dgal{x_k,x_j}_f^\circ$. (Recall that the Leibniz rules guarantee that cyclic antisymmetry holds if it does on generators.)
Due to \eqref{Eq:ProbA}, these identities are equivalent to 
\begin{equation} \label{Eq:Prob-anti-1}
    \delh_i(f)=\delh_i(f)^\circ\,, \qquad 1\leq i \leq 3\,.
\end{equation} 
Thus, we have to check that \eqref{Eq:Prob-anti-1} holds if and only if $f$ is fully non-commutative. 
Furthermore, since each $\delh_i$ is a double derivation of degree $-1$, we can assume without loss of generality that $f$ is homogeneous of degree $d$. There is clearly nothing to check if $d=0$, i.e. $f\in  \kk$. 

Let us first assume that $\dgal{-,-}_f$ is a double bracket, i.e. \eqref{Eq:Prob-anti-1} holds. 
Fix an arbitrary term $\gamma \,x_{i_1}\ldots x_{i_d}$ in $f$, where $\gamma\in \kk^\times$. Applying $\delh_{i_w}$ for some $1\leq w \leq d$ to that term, we see that there is a term   $\gamma \,x_{i_1}\ldots x_{i_{w-1}}\otimes x_{i_{w+1}}\ldots x_{i_d}$ in $\delh_{i_w}(f)$. Due to \eqref{Eq:Prob-anti-1}, the term 
$\gamma \, x_{i_{w+1}}\ldots x_{i_d}\otimes x_{i_1}\ldots x_{i_{w-1}}$ must occur in $\delh_{i_w}(f)$, hence there is a term $\gamma \, x_{i_{w+1}}\ldots x_{i_d} x_{i_w} x_{i_1}\ldots x_{i_{w-1}}$ in $f$. In particular, the case $w=1$ (or $w=d$) amounts to a cyclic permutation of the factors. 
The case $w=2$ followed by the cyclic permutation amounts to swapping the first two factors. 
Therefore, by combining these two generators of the symmetric group, we can see that for any $\sigma \in S_d$ the  term 
 $\gamma \,x_{i_{\sigma(1)}}\ldots x_{i_{\sigma(d)}}$ appears in $f$. In particular, $f$ is fully non-commutative. 
 
Conversely, assume that $f$ is fully non-commutative. 
If a term $\gamma \,x_{i_1}\ldots x_{i_{w-1}}\otimes x_{i_{w+1}}\ldots x_{i_d}$, $\gamma\in \kk^\times$, appears in $\delh_j(f)$ we must have a term 
$\gamma \,x_{i_1}\ldots x_{i_{w-1}}x_j x_{i_{w+1}}\ldots x_{i_d}$ in $f$. Being fully non-commutative, 
$\gamma \,x_{i_{w+1}}\ldots x_{i_d}x_j x_{i_1}\ldots x_{i_{w-1}}$ is also a term in $f$. Thus $\gamma \,x_{i_{w+1}}\ldots x_{i_d}\otimes x_{i_1}\ldots x_{i_{w-1}}$ appears in $\delh_j(f)$, hence the term that we picked earlier is a term in $\delh_j(f)^\circ$.  

\subsubsection{Proof of Lemma \ref{Lem:Prob-deg}}

Using the grading of $A$ to decompose $f=\sum_{w=0}^d f_w$ in homogeneous components, $|f_w|=w$, 
we can split the double bracket $\dgal{-,-}_f$ as $\sum_{w=0}^d \dgal{-,-}_{f_w}$. 
By assumption, $\dgal{-,-}_f$ is a double bracket, so by Proposition \ref{Pr:Prob-anti} $f$ is fully non-commutative. Therefore each $f_w$ is fully non-commutative as well, and each $\dgal{-,-}_{f_w}$ is a double bracket again by Proposition \ref{Pr:Prob-anti}. 

Recall that $A^{\otimes 2}$ and $A^{\otimes 3}$ are naturally endowed with gradings such that $|x_j|=1$ and $|\kk|=0$, so that $\delh_j:A\to A^{\otimes 2}$ has degree $-1$ for each $1\leq j \leq 3$. 
Therefore, we have from \eqref{Eq:ProbA} that each double bracket $\dgal{-,-}_{f_w}$ is an operation of degree $w-3$ because $|\dgal{x_j,x_k}_{f_w}|=w-1$. 
When computing the double Jacobiator $\dgal{x_i,x_j,x_k}_f$ with $1\leq i,j,k \leq 3$, we see that the highest possible term in $A^{\otimes 3}$ is the one coming from $\dgal{x_i,x_j,x_k}_{f_d}$ in degree $2d-3$. Since $\dgal{-,-,-}_f=0$ by assumption, the operation of highest degree $\dgal{-,-,-}_{f_d}$ must vanish. Hence $\dgal{-,-}_{f_d}$ is a double Poisson bracket, as desired.

\subsubsection{Proof of Theorem \ref{Thm:ProbRes}}

We prove the different statements in the case \emph{(ii)}, and we leave the easier case \emph{(i)} to the reader. 
We set $f_d:=(x_1+x_2+x_3)^d$ for any $d\geq 0$ so that $f_d=f_1^d$. Note that $f_d$ is fully non-commutative, so it defines a double bracket $\dgal{-,-}_{f_d}$ by Proposition \ref{Pr:Prob-anti}. 
Our next step consists in checking that $\dgal{-,-}_{f_d}$ is a double Poisson bracket. 
We start with the following useful results. 

\begin{lemma} \label{Lem:PfApp1}
Fix $d\geq 0$. For any $a\in A$, $\dgal{f_1,a}_{f_d}=0$. 
In particular, 
$\dgal{f_p,a}_{f_d}=0$ for any $p\geq 0$. 
\end{lemma}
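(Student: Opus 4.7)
The key observation is that for every $k\in\{1,2,3\}$ we have $\delh_k(f_1)=1\otimes 1$, so by applying the outer Leibniz rule of the double derivation $\delh_k$ inductively we obtain
\begin{equation*}
\delh_k(f_d)=\sum_{p=0}^{d-1} f_1^p\otimes f_1^{d-1-p}\,,
\end{equation*}
which is manifestly independent of $k\in\{1,2,3\}$. I will therefore write $D_d\in A^{\otimes 2}$ for this common value.

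Next, I would use the defining formula \eqref{Eq:ProbA} together with the $k$-independence of $\delh_k(f_d)$ to compute the double bracket on generators:
\begin{equation*}
\dgal{f_1,x_j}_{f_d}=\sum_{i=1}^3\dgal{x_i,x_j}_{f_d}
=\sum_{i,k}\epsilon^{ijk}\,\delh_k(f_d)
=\Bigl(\sum_{i,k}\epsilon^{ijk}\Bigr)\,D_d=0\,,
\end{equation*}
since $\sum_{i,k}\epsilon^{ijk}=0$ by the total antisymmetry of $\epsilon$ (equivalently, the sum $\sum_i\epsilon^{ijk}$ is antisymmetric in $(j,k)$ and then summed against the $k$-independent factor $D_d$). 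Having verified vanishing on the generators $x_1,x_2,x_3$, the right Leibniz rule \eqref{Eq:Dbr-Der1} shows that $\dgal{f_1,-}_{f_d}$ is a derivation $A\to A^{\otimes 2}$ for the outer bimodule structure, and hence is identically zero on $A$.

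For the second part, I would argue by induction on $p\geq 0$ that $\dgal{f_p,a}_{f_d}=0$ for all $a\in A$. The base case $p=0$ is immediate since $f_0=1$ and any double bracket vanishes when one argument is the unit. Writing $f_p=f_1\cdot f_{p-1}$ and applying the left Leibniz rule \eqref{Eq:Dbr-Der2},
\begin{equation*}
\dgal{f_p,a}_{f_d}=f_1\ast \dgal{f_{p-1},a}_{f_d}+\dgal{f_1,a}_{f_d}\ast f_{p-1}\,,
\end{equation*}
both terms vanish (the first by induction, the second by the previous paragraph), completing the induction. No real obstacle is expected here: the whole argument rests on the single structural fact that $\delh_k(f_1)$ is independent of $k$, which decouples the antisymmetry of $\epsilon$ from the factor $\delh_k(f_d)$ in \eqref{Eq:ProbA}.
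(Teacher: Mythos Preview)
Your proof is correct and follows essentially the same approach as the paper: both compute $\delh_k(f_d)=\sum_{\delta=0}^{d-1} f_1^\delta\otimes f_1^{d-1-\delta}$, observe its $k$-independence, use this together with the antisymmetry of $\epsilon$ to obtain $\dgal{f_1,x_j}_{f_d}=0$, and then extend to all of $A$ via the right Leibniz rule; the second part is likewise obtained from the left Leibniz rule and $f_p=f_1^p$. The only cosmetic difference is that the paper evaluates the sum over $i$ explicitly as $\delh_{j+1}(f_d)-\delh_{j-1}(f_d)$ rather than factoring out $D_d$.
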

\begin{proof}
The first part is trivial if $d=0$ since the double bracket is the zero one. The second part is trivial if $p=0$ by linearity. We omit those instances and assume $d,p\geq 1$ below. 

We compute that for any $1\leq k \leq 3$, since $\delh_k \in \DDer(A)$,
\begin{equation} \label{Eq:DelhF}
    \delh_k(f_d)=\sum_{\delta=0}^{d-1} f_\delta \cdot \delh_k(f_1) \cdot f_{d-\delta-1}
    = \sum_{\delta=0}^{d-1} f_\delta \otimes f_{d-\delta-1} \,,
\end{equation}
which is independent of $k$. Thus, for any $1\leq j \leq 3$ we get from \eqref{Eq:ProbA} that 
\begin{align*}
    \dgal{f_1,x_j}_{f_d}=\sum_{1\leq i,j \leq 3} \epsilon^{ijk} \delh_k(f_d)
=\delh_{j+1}(f_d)-\delh_{j-1}(f_d) =0 \,.   
\end{align*}
(The indices $j\pm 1$ are taken modulo $3$.) By the right Leibniz rule \eqref{Eq:Dbr-Der1}, $\dgal{f_1,a}_{f_d}=0$ for any $a\in A$. 
We then get $\dgal{f_p,a}_{f_d}=0$ using the left Leibniz rule \eqref{Eq:Dbr-Der2} and $f_p=f_1^p$. 
\end{proof}

\begin{lemma} \label{Lem:PfApp2}
For any $1\leq i,j,k\leq 3$ and $d,p\geq 0$, we have 
$\dgal{x_i, \dgal{x_j,x_k}_{f_d}}_{f_p,L}=0$.
\end{lemma}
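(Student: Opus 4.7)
The plan is to reduce the statement to the vanishing result already established in Lemma \ref{Lem:PfApp1}. The key observation is that, because $f_1 = x_1+x_2+x_3$ and $\delh_l(f_1) = 1\otimes 1$ for every $l\in\{1,2,3\}$, the computation \eqref{Eq:DelhF} in the proof of Lemma \ref{Lem:PfApp1} gives
\[
\delh_l(f_d) = \sum_{\delta=0}^{d-1} f_\delta \otimes f_{d-\delta-1}
\]
which is independent of $l$. Consequently, from \eqref{Eq:ProbA} the element $\dgal{x_j,x_k}_{f_d}$ is a $\kk$-linear combination of elementary tensors of the form $f_\delta \otimes f_{d-\delta-1}$ with $0\leq \delta \leq d-1$.

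I would then apply the definition of $\dgal{-,-}_L$ given in \eqref{Eq:dbr-L}: each term $\dgal{x_i,\, f_\delta \otimes f_{d-\delta-1}}_{f_p,L}$ equals $\dgal{x_i,f_\delta}_{f_p}\otimes f_{d-\delta-1}$. The first tensor factor is zero by Lemma \ref{Lem:PfApp1} (applied with $\delta$ in the role of $p$ and $p$ in the role of $d$) combined with the cyclic antisymmetry \eqref{Eq:Dbr-cyc}, since $\dgal{x_i,f_\delta}_{f_p} = -\dgal{f_\delta,x_i}_{f_p}^\circ = 0$. Summing over $\delta$ and over the index $l$ in \eqref{Eq:ProbA} gives the claim.

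There is essentially no obstacle here: the whole content of the lemma is that $\dgal{x_j,x_k}_{f_d}$ lies in the subspace of $A^{\otimes 2}$ spanned by tensors whose left factor is a power $f_\delta$, on which $\dgal{x_i,-}_{f_p}$ vanishes by the previous lemma. The only small care needed is to apply Lemma \ref{Lem:PfApp1} to the \emph{left} argument via cyclic antisymmetry, rather than directly. Note that nothing in this argument uses Jacobi identity for $\dgal{-,-}_{f_p}$, so the lemma holds for all $d,p\geq 0$ regardless of whether $\dgal{-,-}_{f_p}$ is Poisson; this will matter when it is used to verify Jacobi identity for $\dgal{-,-}_{f_d}$ (the case $p=d$) as part of establishing Theorem \ref{Thm:ProbRes}(ii).
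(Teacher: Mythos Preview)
Your proposal is correct and follows essentially the same route as the paper: expand $\dgal{x_j,x_k}_{f_d}$ via \eqref{Eq:ProbA} and \eqref{Eq:DelhF} as a sum of terms $f_\delta\otimes f_{d-\delta-1}$, apply the definition \eqref{Eq:dbr-L} of $\dgal{-,-}_L$, and conclude using Lemma~\ref{Lem:PfApp1}. You are in fact slightly more explicit than the paper in noting that cyclic antisymmetry is needed to pass from $\dgal{f_\delta,x_i}_{f_p}=0$ (which is what Lemma~\ref{Lem:PfApp1} literally gives) to $\dgal{x_i,f_\delta}_{f_p}=0$.
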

\begin{proof}
Using \eqref{Eq:dbr-L}, \eqref{Eq:ProbA} and \eqref{Eq:DelhF}, we write that 
\begin{align*}
\dgal{x_i, \dgal{x_j,x_k}_{f_d}}_{f_p,L}=&
\sum_{1\leq \ell \leq 3} \epsilon^{jk \ell}
\dgal{x_i, \delh_\ell(f_d)}_{f_p,L} 
=\sum_{1\leq \ell \leq 3} \epsilon^{jk \ell} \sum_{\delta=0}^{d-1} 
\dgal{x_i, f_\delta}_{f_p} \otimes f_{d-\delta -1}\,.
\end{align*}
This is zero because $\dgal{x_i, f_\delta}_{f_p}$ vanishes for all $1\leq \delta \leq d$ by Lemma \ref{Lem:PfApp1}. 
\end{proof}

We now check the vanishing of the double Jacobiator of $\dgal{-,-}_{f_d}$ on generators. By \eqref{Eq:dJac}, we have for $1\leq i,j,k\leq 3$ that 
\begin{align*}
    \dgal{x_i,x_j,x_k}_{f_d}=&
\sum_{s=0,1,2}\tau_{(123)}^s \circ 
\dgal{-, \dgal{-,-}_{f_d}}_{f_d,L}\circ \tau_{(123)}^{-s}(x_i,x_j,x_k)\,,
\end{align*}
and each term vanishes due to Lemma \ref{Lem:PfApp2}. 

Let us now assume that 
$f=\sum_{w=0}^d \gamma_w (x_1+x_2+x_3)^w=\sum_{w=0}^d \gamma_w f_w$  with $\gamma_0,\ldots,\gamma_{d-1} \in \kk$ and $\gamma_d\in \kk^\times$. The double bracket $\dgal{-,-}_f$ is Poisson as a consequence of Lemma \ref{Lem:PfApp2}. Indeed, the double Jacobiator vanishes on generators due to the decomposition  
\begin{align*}
    \dgal{-,-,-}_{f}=&
\sum_{s=0,1,2}\tau_{(123)}^s \circ 
\dgal{-, \dgal{-,-}_{f}}_{f,L}\circ \tau_{(123)}^{-s} \\
=&\sum_{w,w'=0}^d \gamma_w \gamma_{w'} 
\sum_{s=0,1,2}\tau_{(123)}^s \circ 
\dgal{-, \dgal{-,-}_{f_w}}_{f_{w'},L}\circ \tau_{(123)}^{-s} \,,
\end{align*}
which holds by bilinearity of a double bracket. 
In particular, we can also get that $\dgal{f,-}_f=0$ by bilinearity using Lemma \ref{Lem:PfApp1}.

\subsection{Applications and examples} \label{ss:App-Ex}

\begin{example}
The element $f=x_1x_2$ is not fully non-commutative. The operation $\dgal{-,-}_f:A^{\times 2}\to A^{\otimes 2}$ satisfying \eqref{Eq:ProbA} and the left/right Leibniz rules is \emph{not} a double bracket by Proposition \ref{Pr:Prob-anti}. This is readily checked: \eqref{Eq:ProbA} yields 
$\dgal{x_2,x_3}_f=1\otimes x_2$ and $\dgal{x_3,x_2}_f=-1\otimes x_2$, so that the cyclic antisymmetry \eqref{Eq:Dbr-cyc} fails. 
\end{example}

\begin{example}
Set $f=x_1^d$ for some $d\geq 1$. 
The operation $\dgal{-,-}_{f}$ obtained through \eqref{Eq:ProbA} is a double Poisson bracket by case \emph{(i)} of Theorem \ref{Thm:ProbRes}. Its explicit form on generators is 
$\dgal{x_1,x_j}_{f}=0$, $\dgal{x_j,x_j}_{f}=0$ for $1\leq j \leq 3$, together with 
$\dgal{x_2,x_3}_{f}=\sum_{\delta=0}^{d-1} x_1^\delta \otimes x_1^{d-\delta-1}$. 
Using Theorem \ref{Thm:ProbRes} again, any linear combination of these double Poisson brackets is again a double Poisson bracket. 
\end{example}

\begin{example}
Set $f=(x_1+x_2+x_3)^d$ for some $d\geq 1$. 
The operation $\dgal{-,-}_f$ satisfying \eqref{Eq:ProbA} is a double Poisson bracket by case \emph{(ii)} of Theorem \ref{Thm:ProbRes}. It is such that  
$\dgal{x_j,x_j}_{f}=0$ for $1\leq j \leq 3$, while for $1\leq j \leq 3$ (with $j+1$ taken modulo $3$)  
$$\dgal{x_j,x_{j+1}}_{f}=\sum_{\delta=0}^{d-1} (x_1+x_2+x_3)^\delta \otimes (x_1+x_2+x_3)^{d-\delta-1}\,.$$ 
Any linear combination of these double Poisson brackets also yields a double Poisson bracket. 
\end{example}

The previous two cases coming from Theorem \ref{Thm:ProbRes} do not exhaust the full list of solutions to Problem \ref{Prob:A}: we can also consider linear polynomials, as we show in Example \ref{Ex:Linear}. While the classification could still be incomplete, we expect that a generic fully non-commutative polynomial will fail to define a double Poisson bracket. In particular, we believe that fully non-commutative polynomials in two variables (of highest degree $d>1$) will fail to yield double Poisson brackets, and some instances are gathered in Examples \ref{Ex:Quad-Prob} and \ref{Ex:HQuad-Prob}.

\begin{example} \label{Ex:Linear}
Fix $\zeta_k\in \kk$ for $0\leq k \leq 3$. Let $f=\sum_{1\leq k\leq 3} \zeta_k x_k + \zeta_0$, which is fully non-commutative. 
Then the operation $\dgal{-,-}_f$ obtained through \eqref{Eq:ProbA} is a double bracket by Proposition \ref{Pr:Prob-anti}, which is Poisson. 
Indeed, on generators we can write 
$\dgal{x_j,x_j}_f=0$ for $1\leq j \leq 3$ and 
$$\dgal{x_1,x_2}_f=\zeta_3\, 1\otimes 1,\quad \dgal{x_2,x_3}_f=\zeta_1\, 1\otimes 1, \quad 
\dgal{x_3,x_1}_f=\zeta_2\, 1\otimes 1\,.$$
This is a constant double bracket, so it is Poisson by Lemma \ref{Lem:Jac2} (the assumptions are satisfied because the double Jacobiator is a derivation in each argument thanks to Lemma \ref{Lem:OutJac}).
\end{example}

\begin{example} \label{Ex:Quad-Prob}
The element $f=x_1x_2+x_2x_1$ is fully non-commutative. Hence, it defines a double bracket through \eqref{Eq:ProbA} by Proposition \ref{Pr:Prob-anti}, which is such that $\dgal{x_j,x_j}_f=0$ for $1\leq j \leq 3$ and 
$$\dgal{x_1,x_2}_f=0,\quad \dgal{x_2,x_3}_f=1\otimes x_2+x_2\otimes 1, \quad \dgal{x_3,x_1}_f=1\otimes x_1+x_1\otimes 1\,.$$
The double bracket is not Poisson, i.e. the double Jacobiator \eqref{Eq:dJac} does not vanish identically. Indeed, 
\begin{align*}
    \dgal{x_3,x_2,x_3}_f
=& \dgal{x_3,\dgal{x_2,x_3}_f}_{f,L}+\tau_{(132)} \dgal{x_3,\dgal{x_3,x_2}_f}_{f,L} \\
=&\dgal{x_3,x_2}_f \otimes 1 
- \tau_{(132)} \dgal{x_3,x_2}_f \otimes 1 \\
=&1\otimes 1 \otimes x_2 - 1 \otimes x_2 \otimes 1 \neq 0\,.
\end{align*}
\end{example}

\begin{example} \label{Ex:HQuad-Prob}
Fix a degree $d\geq 2$ and some $1\leq e\leq d-1$. 
The fully non-commutative polynomial $f\in \kk\langle x_1,x_2\rangle\subset A$ of degree $d$ with $e$ factors $x_1$ is given by 
\begin{equation*}
f=\sum_{\sigma\in S_d} x_{i_{\sigma(1)}}\ldots x_{i_{\sigma(d)}}  \,, \quad 
i_1=\ldots=i_{e}=1,\quad  i_{e+1}=\ldots=i_d=2\,.
\end{equation*}
(The case $d=2$ is given in Example \ref{Ex:Quad-Prob}.) 
We claim that any fully non-commutative $g\in A$ with term of highest degree equal to $f$ does not define a double Poisson bracket through \eqref{Eq:ProbA}; by Lemma \ref{Lem:Prob-deg}, it suffices to show that the double bracket $\dgal{-,-}_f$ associated with $f$ is not Poisson. 

Our aim is to show that $\dgal{x_2,x_3,x_3}_f\neq 0$. 
Since $\dgal{x_3,x_3}_f=0$ and $\dgal{x_2,x_3}_f=\delh_1(f)$, we can see from \eqref{Eq:dJac} that this boils down to proving 
\begin{equation}
    (1-\tau_{(123)}) \dgal{x_3,\delh_1(f)}_{f,L} \neq 0\,.
\end{equation}
In other words, we seek a nonzero term in $A^{\otimes 3}$ from this expression. 
Our precise aim consists in finding, among all terms of highest degree in $A\otimes \kk \otimes \kk$, a unique nonzero term of the form $x_1^\alpha x_2^\beta x_1^\gamma\otimes 1 \otimes 1$ with $\beta> 0$, $\alpha\geq \gamma \geq 0$, which has highest order $\alpha$.  

Introduce the symmetrising map $C^w:\kk\langle x_1,x_2\rangle_w\to \kk\langle x_1,x_2\rangle_w$, where $\kk\langle x_1,x_2\rangle_w\subset \kk\langle x_1,x_2\rangle$ is the sub-vector space of degree $w$ elements, acting as 
$C^w(x_{j_1}\ldots x_{j_w})=
\sum_{\sigma\in S_w} x_{j_{\sigma(1)}}\ldots x_{j_{\sigma(w)}}$, for $j_1,\ldots,j_w\in \{1,2\}$. 
Note in particular that $f=C^d(x_1^e x_2^{d-e})$. 
Furthermore, we note that the terms of $\delh_1(f)$ of highest degree in $A\otimes \kk\subset A^{\otimes 2}$ are $C^{d-1}(x_1^{e-1}x_2^{d-e})\otimes 1$. 
In $\{\!\{x_3,\delh_1(f)\}\!\}_{f,L}$, these terms yield\footnote{The first term in this expression occurs only if $e\geq 2$. We write it also for $e=1$ since it does not provide the maximal term in the following discussion.
The second term occurs if $d-e\geq 1$, which is in our assumptions.} after restriction to $A\otimes \kk\otimes \kk$ 
\begin{align*}
    &C^{d-2}(x_1^{e-2}x_2^{d-e})C^{d-1}(x_1^e x_2^{d-e-1})\otimes 1\otimes 1 \\
-&C^{d-2}(x_1^{e-1}x_2^{d-e-1})C^{d-1}(x_1^{e-1} x_2^{d-e})\otimes 1\otimes 1 \,.
\end{align*}
Since $d-e\geq1$, there is a unique term of the desired form which is 
$-x_1^{e-1}x_2^{2d-2e-1}x_1^{e-1}\otimes 1\otimes 1$ if $d-e-1\geq 1$, or 
$-x_1^{2e-2}x_2^{d-e}\otimes 1 \otimes 1$ if $d-e=1$. 

Meanwhile, terms in $A\otimes \kk\otimes \kk$ coming from $-\tau_{(123)}\{\!\{x_3,\delh_1(f)\}\!\}_{f,L}$ are terms in $\kk\otimes\kk\otimes A$ of $-\{\!\{x_3,\delh_1(f)\}\!\}_{f,L}$. By an argument similar to the one above, the terms of $-\{\!\{x_3,\delh_1(f)\}\!\}_{f,L}$ with highest order with respect to the third tensor factor are  
\begin{align*}
&-\{\!\{x_3,x_1\otimes C^{d-2}(x_1^{e-2}x_2^{d-e})\}\!\}_{f,L}
-\{\!\{x_3,x_2\otimes C^{d-2}(x_1^{e-1}x_2^{d-e-1})\}\!\}_{f,L} \\
=&-\delh_2(f)\otimes C^{d-2}(x_1^{e-2}x_2^{d-e})
+\delh_1(f)\otimes C^{d-2}(x_1^{e-1}x_2^{d-e-1})\,.
\end{align*}
Since $f$ has degree $d>1$, $\delh_1(f),\delh_2(f)$ are not contained in $\kk\otimes \kk$ so there is no term in $A\otimes \kk\otimes \kk$ coming from those. 
If we consider terms of $-\{\!\{x_3,\delh_1(f)\}\!\}_{f,L}$ with a lower order  with respect to the third tensor factor, they must  contain at least a factor $x_1$ or $x_2$ in one of the first two tensor factors. Therefore, $-\tau_{(123)}\{\!\{x_3,\delh_1(f)\}\!\}_{f,L}$ does not contain  terms of $A\otimes \kk\otimes \kk$. 
Hence, the unique term found above does not vanish in 
$(1-\tau_{(123)})\{\!\{x_3,\delh_1(f)\}\!\}_{f,L}$, as desired. 
\end{example}


 \addcontentsline{toc}{section}{References}

\Addresses

\end{document}